\documentclass[a4paper, 12pt]{article}

\usepackage[top=3.4cm, left=3.4cm, right=3.4cm,bottom=3.4cm]{geometry}

\usepackage{graphicx}
\usepackage{amsmath, amssymb, amsthm}
\def \e {\varepsilon}

\usepackage{cases}

\newcommand{\R}{{\mathbb R}}
\newcommand{\N}{{\mathbb N}}

\newtheorem{theorem}{Theorem}[section]

\newtheorem{lemma}[theorem]{Lemma}
\newtheorem{definition}[theorem]{Definition}
\newtheorem{defi}[theorem]{Definition}

\newtheorem{corollary}[theorem]{Corollary}
\newtheorem{proposition}[theorem]{Proposition}

\theoremstyle{remark}
\newtheorem{remark}[theorem]{Remark}

\usepackage{enumitem}

\DeclareMathOperator{\esssup}{ess \, sup}
\DeclareMathOperator{\essinf}{ess \, inf}

\DeclareMathOperator{\ess}{ess}

\newcommand{\mintO}{\displaystyle {\int \kern -0.961em -}_{\Omega}}
\newcommand{\mintOs}{\displaystyle {\int \kern -0.961em -}_{\Omega^\sharp}}

\usepackage{color}
\definecolor{darkblue}{rgb}{0,0,0.7}

\begin{document}

\title{On the large time behavior of the solutions of a nonlocal
ordinary differential equation
with mass conservation}
\author{Danielle Hilhorst\thanks{\texttt{CNRS, Laboratoire de Math\'{e}matique, Analyse Num\'{e}rique et EDP, Universit\'e de Paris-Sud, F-91405 Orsay Cedex, France}},
 Hiroshi Matano\thanks{\texttt{Graduate School of Mathematical Sciences, University of Tokyo, Komaba, Tokyo 153-8914, Japan}},\\
Thanh Nam Nguyen\thanks{\texttt{Laboratoire de Math\'{e}matique, Analyse Num\'{e}rique et EDP, Universit\'e de Paris-Sud, F-91405 Orsay Cedex, France}}\, and Hendrik Weber \thanks{\texttt{Mathematics Institute, University of Warwick, Coventry CV4 7AL, United Kingdom}}
}
\date{}
\maketitle

\medskip

\noindent{\bf Abstract.}
We consider an initial value problem for a nonlocal differential
equation with a bistable nonlinearity in several space dimensions.
The equation is an ordinary differential equation
with respect to the time variable $t$, while the nonlocal term
is expressed in terms of spatial integration.  We discuss the
large time behavior of solutions and prove, among other things,
the convergence to steady-states. The proof that the solution
orbits are relatively compact is based upon the rearrangement theory.

\date{}          


\section{Introduction}


\subsection{Motivation}

In this paper, we study solutions $u(x,t)$ of
the initial value problem
\begin{equation*}
 (P) \ \ \left\{
\begin{array}{ll}
u_t=f(u)-\langle f(u) \rangle \quad &  x\in \Omega, \, t >0,\vspace{6pt}\\
u(x,0)=u_0(x)  & x\in \Omega,
\end{array}
\right.
\end{equation*}
where $\Omega$ is a bounded open set in $\R^N$ with $N \ge 1$ and
$$
\langle f(u) \rangle:=\frac{1}{|\Omega|}\int_{\Omega} f(u(x,t))\,dx.
$$
Here
$|A|$ denotes the Lebesgue measure of a set $A\subset \Omega$,
$u_0$ is a bounded function on $\Omega$ and $f(u)$ is a bistable
nonlinearity. More precise conditions on $f$ will be specified
later. A typical example is $f(u)=u-u^3$. Note that the Problem
$(P)$ does not change if we replace $f$ by $f+c$ where $c$ is any
constant.

Solutions of Problem $(P)$ satisfy the following mass conservation
property
\begin{equation}\label{mass:conservation:modify}
\int_\Omega u(x,t)\,dx=\int_\Omega u_0(x) \,dx \mbox{~~for all~~}t
\ge 0.
\end{equation}
This is easily seen by integrating the equation in $(P)$, but we
will state it more precisely in Theorem \ref{principal:theorem}.
Note also that Problem $(P)$ formally represents a gradient flow
for the functional
\begin{equation}\label{3:11:2014:Hamlyapunove}
E(u)=-\int_{\Omega} F(u) \mbox{~~with~~}F(s)=
\int_0^s f(\sigma)
\,d \sigma
\end{equation}
on the space
$X:=\{w \in L^2(\Omega): \int_\Omega w \,dx=\int_\Omega
u_0\,dx\}$
(see Remark \ref{gradientlfow:chobaitaon}).

Let us explain the motivation of the present work briefly.
The singular limit of reaction-diffusion equations, in particular,
the generation of interface and the motion of interface have been
studied by many authors, for instance \cite{AlfaroHilhorstMatano},
\cite{Chen}, \cite{de-motton}. This paper is motivated by the
study of a generation of interface property for the Allen-Cahn
equation with mass conservation:
\begin{equation*} \label{Allen-Cahn:motivation:equation}
 (P^\e) \ \  \left\{
\begin{array}{ll}
u^\varepsilon_t=\Delta u^\varepsilon+\dfrac{1}{\varepsilon^2}
(f(u^\varepsilon)-\langle f(u^\varepsilon)\rangle)
& \mbox{in } \Omega\times (0,\infty),\vspace{5pt}\\
\dfrac{\partial u^\e}{\partial \nu}=0 & \mbox{on } \partial\Omega
\times (0,\infty), \vspace{8pt}\\
u^\e(x,0)=u_0(x)  & x\in \Omega.
\end{array}
\right.
\end{equation*}
We refer the reader to \cite{Rubinstein} for the motivation of
studying this model.

If we use a new time scale $\tau := t/\varepsilon^2$ in order to
study the behavior of the solution at a very early stage, then the
equation in $(P^\e)$ is rewritten as follows:
\begin{equation}\label{rescale}
u^\varepsilon_\tau=\e^2 \Delta u^\varepsilon+f(u^\varepsilon)-
\langle f(u^\varepsilon)\rangle.
\end{equation}
If the term $\e^2 \Delta u^\varepsilon$ is negligible for sufficiently
small $\e$, then \eqref{rescale} reduces to the equation in $(P)$,
thus one can expect that $(P)$ is a good approximation of $(P^\e)$
in the time scale $\tau$ so long as $\e^2 \Delta u^\varepsilon$ is
negligible.

In the standard Allen-Cahn problem
$u^\varepsilon_t=\Delta u^\varepsilon+\dfrac{1}{\varepsilon^2}
f(u^\varepsilon)$,
it is known that the term $\e^2 \Delta u^\varepsilon$ is indeed
negligible for a rather long time span of $\tau=O(|\ln\e|)$, or
equivalently $t=O(\e^2|\ln\e|)$.
During this period, the solution typically develops steep transition
layers (generation of interface).
After that,
the term $\e^2 \Delta u^\varepsilon$ is no longer negligible;
see, for example, \cite{AlfaroHilhorstMatano, Chen} for details.

It turns out that the same is true of Problem $(P^\e)$, as we
will show in our forthcoming paper \cite{HMNW}. In other words,
$(P^\e)$ is well approximated by the ordinary differential
equation $(P)$ for a rather long time span in $\tau$.  Thus,
in order to analyze the behavior of solutions of $(P^\e)$ at the
very early stage, it is important to understand the large time
behavior of solutions of $(P)$.

We will show in this paper that the solution of $(P)$ converges
to a stationary solution as $t \to\infty$.  Furthermore, in
many cases the limit stationary solutions are step functions
that take two values $a_-$ and $a_+$ which satisfy
$$f(a_-)=f(a_+), \quad f'(a_-) < 0, \quad f'(a_+) < 0.$$
Consequently, one may expect that, at the very early stage, the
solution $u^\e$ to Problem $(P^\e)$ typically approaches a
configuration consisting of plateaus near the values $a_-$ and
$a_+$ and steep interfaces (transition layers) connecting the
regions
$\{x\in \Omega: u^\e \approx a_- \}$ and
$\{x\in \Omega: u^\e \approx a_+\}$.
In order to make
this conjecture rigorous, one needs a more precise error estimate
between $(P)$ and $(P^\e)$, which will be given in the
above-mentioned forthcoming article \cite{HMNW}.

\subsection{Main results}
Before stating our main results, let us specify our assumptions on
the nonlinearity $f$. For the most part we will consider a bistable
nonlinearity satisfying $\bf (F_1), (F_2)$ below,
but some of our results hold under a milder condition $\bf (F')$,
which allows $f$ to be a multi-stable or a more general nonlinearity.

\begin{itemize}
\item[$\bf( F_1)$]  $f \in C^1(\R)$, and there exist $m<M$ satisfying
$f'(m)=f'(M)=0$ and
$$f'<0 \mbox{~~on~~}(-\infty,m) \cup (M,\infty), \quad  f'>0
\mbox{~~on~~}(m,M).$$
\item[$\bf( F_2)$] There exist $s_*<s^*$ satisfying
\begin{equation}\label{chap2:Definition:s:star:va:s:sup:star}
\begin{cases}
s_*<m<M< s^*, \\
f(s_*)=f(M), \quad f(s^*)=f(m).
\end{cases}
\end{equation}
\end{itemize}

\begin{figure}[!h]
\begin{center}
\includegraphics[scale=0.58]{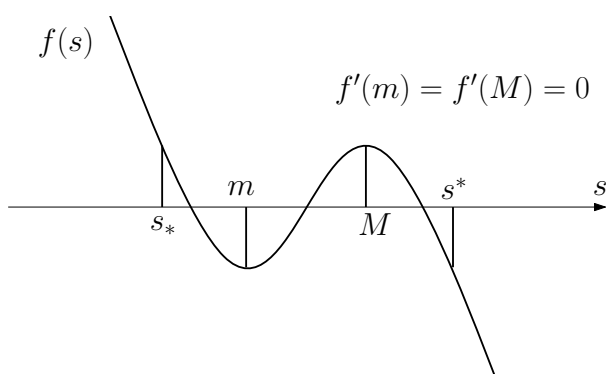}
\caption{{Bistable nonlinearity}}
\label{default}
\end{center}
\end{figure}

A milder version of our assumption is the following.
This assumption allows $f$ to be a multistable nonlinearity.
Any periodic nonlinearity such as $f(s)=\sin s$ also satisfies
this milder assumption.

\begin{itemize}
\item[$\bf( F')$] $f: \R \to \R$ is locally Lipschitz continuous
and there exist two sequences $\{a_n\}$, $\{b_n\}$, with
$a_n \le b_n$,  such that $a_n \to -\infty$,  $b_n \to \infty$ as
$n \to \infty$ and that
$$f(a_n) \ge  f(s) \ge f(b_n) \mbox{~~for all~~} s \in [a_n, b_n].$$
\end{itemize}

Clearly $\bf (F_1)$, $\bf (F_2)$ imply $\bf (F')$. The assumption
$\bf (F')$ will be used only in Theorem \ref{theorem:multi-stable:case}
 below.
Unless otherwise stated, we will always assume $\bf (F_1), (F_2)$.

In Theorems \ref{principal:theorem} and
\ref{principal:theorem:to:kai:po} below we present existence and
uniqueness results for the solution of Problem $(P)$ and give basic
properties of the $\omega$-limit set under the assumption that $u_0 \in L^\infty(\Omega)$.
Note that we will henceforth regard $(P)$ as an ordinary differential
equation on $L^\infty(\Omega)$. More precisely, $u$ is a solution
on the interval $0 \le t < T$ if
$u \in C^1([0, T); L^\infty(\Omega))$ and satisfies
\begin{equation*}
 (P') \ \ \left\{
\begin{array}{ll}
\dfrac{du}{dt}=H(u) &  t >0, \vspace{6pt}\\
u(0)=u_0,
\end{array}
\right.
\end{equation*}
where $H: L^\infty(\Omega) \to L^\infty(\Omega)$ is defined by
\begin{equation}\label{term:nonlocal-nonlinear}
H(u):=f(u)-\langle f(u) \rangle.
\end{equation}
As we will see later in \eqref{problem:ODE:22:6:3:14}, Problem $(P')$ is equivalent (a.e. in $\Omega$)
to the original Problem $(P)$, which is considered pointwise for each $x \in \Omega.$
We will abbreviate $u(x,t)$ as $u(t)$ when we regard $u$
as an $L^\infty(\Omega)$-valued function. We may mix the two notations so long as there is no fear of confusion.

For any $u_0 \in L^\infty(\Omega)$, we fix two constants $s_1<s_2$ such that
$$s_1, s_2 \not\in (s_*, s^*)$$
and that
$$s_1 \le u_0(x) \le s_2 \mbox{~~for a.e.~~}x\in\Omega.$$
For example, if we set
\begin{equation}\label{definetion:esssup,-essinf:ky10:for:u0}
\alpha:=\essinf_{x \in \Omega} u_0(x), \quad \beta:=\esssup_{x \in \Omega} u_0(x),
\end{equation}
then $s_1, s_2$ can be chosen as follows:
\begin{equation*}
\begin{cases}
[\alpha, \beta] \cap (s_*, s^*)=\varnothing  \quad \Rightarrow \quad s_1:=\alpha, s_2:=\beta,\\
[\alpha, \beta] \cap (s_*, s^*) \neq \varnothing  \quad \Rightarrow \quad s_1=\min\{\alpha, s_*\},  s_2=\max\{\beta, s^*\}.
\end{cases}
\end{equation*}
For later reference we
summarize the above hypotheses as follows:

\medskip

\noindent $\bf( H)$
$s_1, s_2\not\in(s_*,s^*)$
\mbox{~~and~~} $s_1 \le u_0(x) \le s_2 \mbox{~~for a.e.~~}x\in\Omega.~$

\medskip

\begin{theorem}\label{principal:theorem}
Assume that $u_0 \in L^\infty(\Omega)$ and let $s_1,s_2$ be as in
$\bf (H)$.
Then Problem $(P)$ possesses a unique time-global solution
$u \in C^1([0,\infty); L^\infty(\Omega))$. Moreover,
the solution has the mass conservation property \eqref{mass:conservation:modify}and, for all $t \ge 0$,
\begin{equation}\label{ine:for:solution:in-thm:s1:s2}
s_1 \le u(x,t) \le s_2 \mbox{~~~for a.e.~~} x\in \Omega.
\end{equation}
\end{theorem}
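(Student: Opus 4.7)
The plan is to proceed by truncation. Define $\tilde f : \R \to \R$ by setting $\tilde f \equiv f(s_1)$ on $(-\infty, s_1)$, $\tilde f = f$ on $[s_1, s_2]$, and $\tilde f \equiv f(s_2)$ on $(s_2, \infty)$. A short case analysis based on the three alternatives permitted by $\bf (H)$, namely $s_1 \le s_2 \le s_*$, or $s^* \le s_1 \le s_2$, or $s_1 \le s_* < s^* \le s_2$, combined with the identities $f(s_*) = f(M)$ and $f(s^*) = f(m)$ from $\bf (F_2)$, shows that $f$ restricted to $[s_1, s_2]$ takes its values precisely in the interval $[f(s_2), f(s_1)]$. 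Hence $\tilde f$ is itself bounded in this interval and is globally Lipschitz with constant $L := \sup_{[s_1, s_2]}|f'|$.

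The induced operator $\tilde H(u) := \tilde f(u) - \langle \tilde f(u) \rangle$ is therefore globally Lipschitz on $L^\infty(\Omega)$ with constant at most $2L$, so the Cauchy--Lipschitz theorem in the Banach space $L^\infty(\Omega)$ produces a unique global solution $\tilde u \in C^1([0, \infty); L^\infty(\Omega))$ of the truncated Cauchy problem $\tilde u' = \tilde H(\tilde u)$, $\tilde u(0) = u_0$. Mass conservation for $\tilde u$ is then immediate by integrating in $x$, since the right-hand side has zero spatial mean by construction.

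The crux is to show that $\tilde u(\cdot, t) \in [s_1, s_2]$ almost everywhere for every $t \ge 0$, so that $\tilde f(\tilde u) = f(\tilde u)$ and $\tilde u$ in fact solves $(P)$, giving at once global existence and the bound \eqref{ine:for:solution:in-thm:s1:s2}. To this end I would study
\[V(t) := \int_\Omega \bigl[(\tilde u - s_2)_+^2 + (s_1 - \tilde u)_+^2\bigr] \, dx,\]
which is of class $C^1$ in $t$ (since $\tilde u \in C^1([0,\infty); L^\infty(\Omega)) \subset C^1([0,\infty); L^2(\Omega))$) and satisfies $V(0) = 0$. Since $\tilde f$ is constant, equal to $f(s_2)$ on the set $\{\tilde u > s_2\}$ and to $f(s_1)$ on $\{\tilde u < s_1\}$, a direct computation gives
\[\frac{dV}{dt} = 2\bigl(f(s_2) - \tilde g(t)\bigr)\!\int_{\{\tilde u > s_2\}}\!(\tilde u - s_2)\,dx + 2\bigl(\tilde g(t) - f(s_1)\bigr)\!\int_{\{\tilde u < s_1\}}\!(s_1 - \tilde u)\,dx,\]
where $\tilde g(t) := \langle \tilde f(\tilde u(\cdot, t)) \rangle$. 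Because $\tilde f$ takes its values in $[f(s_2), f(s_1)]$ pointwise, so does its average $\tilde g(t)$; hence both prefactors are nonpositive while both integrals are nonnegative, yielding $\dot V \le 0$ and therefore $V \equiv 0$.

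Uniqueness for $(P)$ itself then follows by applying Gronwall's lemma in $L^\infty(\Omega)$ to the difference of two solutions, using that $f$ is Lipschitz on any bounded set of $\R$ in which both solutions take values. The main obstacle is the Gronwall-type computation above: the specific choice of truncation (constant, rather than e.g.\ linearly extended, outside $[s_1, s_2]$) is what makes the two prefactors in $\dot V$ manifestly nonpositive via the single range identity $\tilde f(\R) \subset [f(s_2), f(s_1)]$, and the hypothesis $s_1, s_2 \notin (s_*, s^*)$ is precisely what is needed for this range identity to hold.
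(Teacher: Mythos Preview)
Your proof is correct and follows the same overall strategy as the paper: truncate $f$ to a constant outside $[s_1,s_2]$, observe (crucially) that the hypothesis $s_1,s_2\notin(s_*,s^*)$ forces $f([s_1,s_2])\subset[f(s_2),f(s_1)]$, solve the truncated problem globally, and then show the truncated solution never leaves $[s_1,s_2]$ so that it in fact solves $(P)$. The only substantive difference is in how the invariance of $[s_1,s_2]$ is established. The paper works pointwise: it introduces the auxiliary scalar ODE $\dot Y=f(Y)-\lambda(t)$, $Y(0)=s$, notes that $u(x,t)=Y(t;u_0(x))$, and shows directly that $Y(t;s_1)\ge s_1$ and $Y(t;s_2)\le s_2$ because $\dot Y(t;s_2)\le 0$ whenever $Y(t;s_2)\ge s_2$ (and symmetrically at $s_1$), using exactly the range inclusion $\tilde f(\R)\subset[f(s_2),f(s_1)]$. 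You instead run an $L^2$ energy argument on $V(t)=\int_\Omega[(\tilde u-s_2)_+^2+(s_1-\tilde u)_+^2]\,dx$, which leads to the same sign conditions on $f(s_2)-\tilde g(t)$ and $\tilde g(t)-f(s_1)$ from the same range inclusion. Both arguments are short and standard; the paper's pointwise $Y(t;s)$ viewpoint has the side benefit of setting up the representation $u(x,t)=Y(t;u_0(x))$ used repeatedly later, while your energy argument is self-contained for the present theorem.
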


The next theorem is concerned with the structure of the $\omega$-limit set.
Here the $\omega$-limit set of a solution $u(t)$ of $(P)$ (or more precisely $(P')$) with initial data $u_0$
is defined as follows:

\begin{defi}
We define the $\omega$-limit set of $u_0$ by
$$\omega(u_0):=\{\varphi \in L^1(\Omega): \exists t_n \to \infty, u(t_n) \to \varphi \mbox{~~in~~} L^1(\Omega)\mbox{~as~} n \to \infty\}.$$
\end{defi}

The reason why we use the topology of $L^1(\Omega)$, not $L^\infty(\Omega)$,
to define the $\omega$-limit set is that the solution often develops sharp transition layers
which cannot be captured by the $L^\infty(\Omega)$ topology. Note that, since the solution
$u$ is uniformly bounded in $L^\infty(\Omega)$ by \eqref{ine:for:solution:in-thm:s1:s2},
 the topology of $L^p(\Omega)$ is equivalent to that of
$L^1(\Omega)$ for all $p \in [1, \infty)$.

\begin{theorem}\label{principal:theorem:to:kai:po}
Let $u_0, s_1, s_2$ be as in Theorem \ref{principal:theorem}. Then
\begin{enumerate}[label=\emph{(\roman*)}]
\item $\omega(u_0)$ is a nonempty compact connected set in $L^1(\Omega)$.
\item Any element $\varphi \in \omega(u_0)$ satisfies $s_1 \le \varphi \le  s_2$ a.e. and is a stationary
solution of $(P)$. More precisely, it satisfies
$$
f (\varphi(x)) - \langle f (\varphi) \rangle = 0
\mbox{~~~for a.e.~~} x\in\Omega.
$$
\end{enumerate}
\end{theorem}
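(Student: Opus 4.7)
The plan is to exploit the pointwise character of $(P)$ together with the Lyapunov functional $E$ from \eqref{3:11:2014:Hamlyapunove}, with relative compactness of the orbit in $L^1(\Omega)$ coming from Helly's selection theorem in place of the more familiar Sobolev compactness.

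The key observation is that the nonlocal forcing $g(t):=\langle f(u(t))\rangle$ is spatially constant, so the solution can be written as $u(x,t)=Z(u_0(x),t)$ for a.e.\ $x\in\Omega$, where for each $s\in[s_1,s_2]$ the function $z(t)=Z(s,t)$ solves the scalar non-autonomous ODE $\dot z=f(z)-g(t)$ with $z(0)=s$. Uniqueness forces $s\mapsto Z(s,t)$ to be strictly increasing for each $t\ge 0$. Now given any sequence $t_n\to\infty$, the uniformly bounded monotone family $\{Z(\cdot,t_n)\}$ admits, by Helly's selection principle, a subsequence converging pointwise on $[s_1,s_2]$ to a monotone function $Z_\infty$. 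Setting $\varphi(x):=Z_\infty(u_0(x))$, the uniform $L^\infty$-bound from Theorem \ref{principal:theorem} and the dominated convergence theorem yield $u(t_n)\to\varphi$ in $L^1(\Omega)$, so $\omega(u_0)$ is nonempty and the whole orbit is relatively compact in $L^1(\Omega)$. The tails $\{u(t):t\ge T\}$ are continuous images of $[T,\infty)$, hence connected, so their $L^1$-closures are compact and connected, and $\omega(u_0)$ is the decreasing intersection of those closures, so it is itself nonempty, compact and connected, establishing (i).

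Along any solution one computes the Lyapunov identity
$$
\frac{d}{dt}E(u(t))=-\int_\Omega\bigl(f(u)-\langle f(u)\rangle\bigr)^2\,dx\le 0,
$$
so $E(u(t))$ decreases to a limit $E_\infty$. Since $u(t_n)\to\varphi$ in $L^1(\Omega)$ with uniform $L^\infty$-bound, bounded convergence gives $E(u(t_n))\to E(\varphi)$, whence $E\equiv E_\infty$ on $\omega(u_0)$. Because $f$ is Lipschitz on $[s_1,s_2]$, the operator $H$ in \eqref{term:nonlocal-nonlinear} satisfies $\|H(u)-H(v)\|_{L^1(\Omega)}\le 2L\|u-v\|_{L^1(\Omega)}$ on $L^\infty$-balls contained in $[s_1,s_2]$, and a Gronwall estimate yields continuous dependence of solutions in $L^1(\Omega)$ on compact time intervals; combined with the semiflow property this makes $\omega(u_0)$ positively invariant. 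Hence, given $\varphi\in\omega(u_0)$, the solution $\psi$ of $(P')$ with $\psi(0)=\varphi$, which exists by Theorem \ref{principal:theorem} since $s_1\le\varphi\le s_2$, satisfies $\psi(t)\in\omega(u_0)$ for all $t\ge 0$, so $E(\psi(t))\equiv E_\infty$. Differentiating and invoking the Lyapunov identity forces $f(\psi(t))=\langle f(\psi(t))\rangle$ a.e.\ for every $t\ge 0$, and at $t=0$ this is exactly the stationarity of $\varphi$ claimed in (ii).

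The main obstacle I expect is the first step, namely establishing relative compactness of $\{u(t):t\ge 0\}$ in $L^1(\Omega)$, since $(P)$ provides no spatial regularity at all and the standard compactness arguments based on Rellich--Kondrachov are unavailable. The substitute I use is the observation that, up to the pointwise map $Z(\cdot,t)$, each $u(\cdot,t)$ is a monotone function of the initial datum $u_0$; after this reduction, the required compactness is simply Helly's theorem in one variable, which is the rearrangement-type ingredient advertised in the abstract.
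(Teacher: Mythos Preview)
Your argument is correct. The key difference from the paper lies in part~(i): the paper passes to the decreasing rearrangement $u^\sharp(\cdot,t)$ on $\Omega^\sharp=(0,|\Omega|)$, shows it solves the same equation, obtains relative compactness of $\{u^\sharp(t)\}$ in $L^1(\Omega^\sharp)$ via the embedding $BV\hookrightarrow L^1$, and then transfers compactness back to $\{u(t)\}$ by proving the identity $\|u^\sharp(t)-u^\sharp(\tau)\|_{L^1(\Omega^\sharp)}=\|u(t)-u(\tau)\|_{L^1(\Omega)}$. You bypass rearrangement entirely by applying Helly's selection theorem directly to the monotone profile maps $s\mapsto Z(s,t_n)$ on $[s_1,s_2]$, obtaining pointwise convergence everywhere and then composing with $u_0$ to conclude via dominated convergence. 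Both routes exploit the same underlying structure---monotonicity of $s\mapsto Z(s,t)$ coming from backward uniqueness of the scalar ODE---but yours is more elementary, avoiding Proposition~\ref{chap2:mancyuon:ws:ss} and Lemma~\ref{chuyenthanhbode:quenheeeee:lemmme:u-shapr:u} altogether; the paper's route, on the other hand, produces the isometry \eqref{danthuc123:u-sparp-u:13:3:14} as a byproduct, which has some independent interest.

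For part~(ii) the paper argues slightly differently: rather than invoking positive invariance of $\omega(u_0)$ via $L^1$-continuous dependence as you do, it uses the integrability $\int_0^\infty\!\int_\Omega|u_t|^2<\infty$ to show directly that $\|u(t)-\varphi\|_{L^1}$ is small on windows $[t_n,t_n+T]$ (Corollary~\ref{hequa:ham:lyapunov}), and then passes to the limit in $\int_\Omega|f(u)-\langle f(u)\rangle|^2$. Your LaSalle-type argument is the more standard dynamical-systems formulation and is equally valid; the Gronwall bound you state for $H$ in $L^1$ on the invariant region $[s_1,s_2]$ is correct and gives the required continuous dependence.
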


Note that the constant $\langle f (\varphi) \rangle$ that appears
in the above equation is not specified {\it a priori}.  It depends
on each $\varphi$. This is in marked contrast with the standard
Allen-Cahn problem, in which stationary solutions are defined
by the equation $f(\varphi)=0.$

The next theorem is a generalization  of Theorems \ref{principal:theorem} and \ref{principal:theorem:to:kai:po} to a larger class of nonlinearities
including multi-stable ones. In this theorem (and only here) we assume $(\bf F')$ instead of $\bf (F_1), (F_2)$.

\begin{theorem}[The case of multi-stable nonlinearity]\label{theorem:multi-stable:case}
Assume $\bf (F')$ and let $u_0 \in L^\infty(\Omega)$.
Choose $n\in\N$ such that $a_n\leq u_0(x)\leq b_n\;a.e.\;x\in\Omega$.
Then Problem $(P)$ possesses a unique time-global solution
$u \in C^1([0, \infty); L^\infty(\Omega))$.
The solution satisfies \eqref{mass:conservation:modify}
and, for each $t \ge 0$,
$$a_n \le u(x,t) \le b_n \mbox{~~for a.e.~~}x\in\Omega.$$
Furthermore, the conclusions of Theorem
\ref{principal:theorem:to:kai:po} hold with $s_1, s_2$
replaced by $a_n, b_n$, respectively.
\end{theorem}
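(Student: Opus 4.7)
The plan is to mirror the proofs of Theorems \ref{principal:theorem} and \ref{principal:theorem:to:kai:po}, with the role previously played by the constants $s_*,s^*$ taken over by an explicit truncation of $f$ at the endpoints $a_n,b_n$. Fix $n\in\N$ with $a_n\le u_0(x)\le b_n$ a.e.\ in $\Omega$, and set
\begin{equation*}
\tilde f(s):=f\bigl(\max(a_n,\min(s,b_n))\bigr).
\end{equation*}
Since $f$ is locally Lipschitz, $\tilde f$ is bounded and globally Lipschitz on $\R$, so the Nemytskii operator $\tilde H(u):=\tilde f(u)-\langle\tilde f(u)\rangle$ is globally Lipschitz as a map $L^\infty(\Omega)\to L^\infty(\Omega)$. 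The Cauchy--Lipschitz theorem for Banach-valued ODEs then yields a unique global solution $\tilde u\in C^1([0,\infty);L^\infty(\Omega))$ of the modified problem obtained by replacing $H$ with $\tilde H$ in $(P')$.

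The crucial step is to prove that $\tilde u(x,t)\in[a_n,b_n]$ for a.e.\ $x\in\Omega$ and every $t\ge 0$; once this is established, $\tilde f(\tilde u)=f(\tilde u)$, so $\tilde u$ also solves $(P')$, and local uniqueness identifies it with the unique solution of $(P')$. I would prove the invariance by a simple energy estimate: set
\begin{equation*}
\phi(t):=\int_\Omega\bigl[(\tilde u(x,t)-b_n)^+\bigr]^2\,dx.
\end{equation*}
By construction $\tilde f(s)\ge f(b_n)$ for every $s\in\R$, so $\langle\tilde f(\tilde u)\rangle\ge f(b_n)$; moreover $\tilde f(s)=f(b_n)$ whenever $s\ge b_n$. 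Differentiating and using the equation gives
\begin{equation*}
\phi'(t)=2\int_{\{\tilde u>b_n\}}(\tilde u-b_n)\bigl[f(b_n)-\langle\tilde f(\tilde u)\rangle\bigr]\,dx\le 0,
\end{equation*}
and since $\phi(0)=0$ we deduce $\tilde u\le b_n$ a.e. A symmetric argument with $[(a_n-\tilde u)^+]^2$ gives $\tilde u\ge a_n$ a.e. Global existence, uniqueness and the two-sided bound follow simultaneously, and mass conservation is immediate from integrating the equation.

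For the $\omega$-limit set statements I would re-read the proof of Theorem \ref{principal:theorem:to:kai:po} and verify that it uses only: (a) a uniform pointwise bound on the orbit; (b) the Lyapunov functional $E(u)=-\int_\Omega F(u)$ with $F(s)=\int_0^s f(\sigma)\,d\sigma$, which satisfies $\tfrac{d}{dt}E(u(t))=-\|f(u)-\langle f(u)\rangle\|_{L^2(\Omega)}^2$; and (c) the rearrangement-based relative compactness of the orbit in $L^1(\Omega)$. All three ingredients remain available under $\bf(F')$ with $a_n,b_n$ in place of $s_1,s_2$: $F$ is well defined and $C^1$ since $f$ is locally Lipschitz, the Lyapunov identity uses only mass conservation (not the bistable shape of $f$), and the rearrangement argument depends solely on the a.e.\ bound $a_n\le u\le b_n$. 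Conclusions (i) and (ii) of Theorem \ref{principal:theorem:to:kai:po} therefore carry over verbatim. The main obstacle I anticipate is not a single sharp point but the bookkeeping in (c): one must confirm that the rearrangement-based compactness of Theorem \ref{principal:theorem:to:kai:po} does not secretly exploit the bistable structure of $f$; if, as I expect, it genuinely uses only a uniform $L^\infty$ bound, the present theorem becomes essentially a corollary of the previous two modulo the truncation trick above.
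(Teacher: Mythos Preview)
Your proposal is correct and follows essentially the same architecture as the paper: truncate $f$ outside $[a_n,b_n]$, establish the invariance $a_n\le u\le b_n$, deduce that the truncated and original problems coincide, and then observe that the proofs of Theorems \ref{principal:theorem} and \ref{principal:theorem:to:kai:po} go through verbatim since they use only local Lipschitz continuity of $f$, the uniform $L^\infty$ bound, the Lyapunov identity, and the rearrangement-based compactness. The only difference is in how you prove the invariance: the paper invokes Lemma \ref{lem:ab-bound}, which argues pointwise via the auxiliary ODE solutions $Y(t;a_n),Y(t;b_n)$ and comparison, whereas you use an $L^2$ energy estimate on $(\tilde u-b_n)^+$; both work and rely on the same monotonicity property $f(a_n)\ge f(s)\ge f(b_n)$ from $\bf(F')$.
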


The following two theorems are concerned with the fine structure of the $\omega$-limit sets under the assumptions $\bf (F_1), (F_2)$.

\begin{theorem} \label{principal:theorem2}
Let $u_0 \in L^\infty(\Omega)$ satisfy
$\langle u_0 \rangle \not\in [s_*, s^*]$.
Then $\omega(u_0)$ has a unique element $\varphi$ which is given by
$$\varphi=\langle u_0 \rangle.$$
Moreover, there exist constants $C_1, C_2>0$ such that
\begin{equation}\label{convergecne:rate:to:10000}
\|u(t)-\langle u_0 \rangle\|_{L^\infty(\Omega)} \le C_1 \exp(-C_2t) \mbox{~~for all~~}t \ge 0.
\end{equation}
\end{theorem}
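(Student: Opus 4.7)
Set $c:=\langle u_0\rangle$ and assume $c>s^*$ (the case $c<s_*$ is analogous). The plan is to first show $\omega(u_0)=\{c\}$, then to use the pointwise form of $(P)$ to trap $u$ in a narrow interval around $c$, and finally to contract inside the strictly decreasing branch of $f$. By Theorem \ref{principal:theorem:to:kai:po}(ii) together with the mass conservation identity \eqref{mass:conservation:modify}, every $\varphi\in\omega(u_0)$ satisfies $f(\varphi(x))=k$ a.e.\ for some constant $k$, with $s_1\le\varphi\le s_2$ and $\langle\varphi\rangle=c$. A short case analysis on $k$ then forces $\varphi\equiv c$. If $k\ge f(m)$, then by the bistable shape of $f$ (with $f(s^*)=f(m)$ and $f$ decreasing on $(M,\infty)$), every root of $f(v)=k$ in $[s_1,s_2]$ lies in $(-\infty,s^*]$, so $\varphi\le s^*<c$ a.e., which contradicts $\langle\varphi\rangle=c$. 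Hence $k<f(m)$, $f^{-1}(k)=\{v_+\}$ is a singleton with $v_+>s^*$, and $\varphi\equiv v_+=c$. Since $\{u(t):t\ge0\}$ is relatively compact in $L^1(\Omega)$ by Theorem \ref{principal:theorem:to:kai:po}(i), this yields $u(t)\to c$ in $L^1(\Omega)$, and dominated convergence gives $g(t):=\langle f(u(\cdot,t))\rangle\to f(c)$.

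The same three-interval analysis shows that $c$ is the unique root of $f(v)=f(c)$ in $[s_1,s_2]$, so for every $\delta\in(0,c-s^*)$ there exists $\gamma(\delta)>0$ with
$$f(v)-f(c)\ge\gamma(\delta) \ \text{on}\ [s_1,c-\delta],\qquad f(v)-f(c)\le-\gamma(\delta) \ \text{on}\ [c+\delta,s_2].$$
Choose $T_\delta$ so that $|g(t)-f(c)|<\gamma(\delta)/2$ for $t\ge T_\delta$. Using that $(P)$ holds as a pointwise ODE $\dot u(x,t)=f(u(x,t))-g(t)$ for a.e.\ $x$, we obtain $\dot u\ge\gamma(\delta)/2$ wherever $u\le c-\delta$ and $\dot u\le-\gamma(\delta)/2$ wherever $u\ge c+\delta$. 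Elementary integration of this scalar inequality produces a further uniform (in $x$) time $T'_\delta$ after which $u(\cdot,t)$ enters and remains inside $[c-\delta,c+\delta]$.

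Finally choose $\delta<c-s^*$ small enough that $[c-\delta,c+\delta]$ lies in the strictly decreasing branch of $f$; then $f'\le-\eta<0$ on it for some $\eta>0$. For a.e.\ $x,y$ and all $t\ge T'_\delta$, subtracting the pointwise ODEs and invoking the mean value theorem yields
$$\frac{d}{dt}(u(x,t)-u(y,t))^2 = 2(u(x,t)-u(y,t))(f(u(x,t))-f(u(y,t)))\le-2\eta(u(x,t)-u(y,t))^2,$$
whence $|u(x,t)-u(y,t)|\le 2\delta\,e^{-\eta(t-T'_\delta)}$. Mass conservation forces $\essinf u(\cdot,t)\le c\le\esssup u(\cdot,t)$, so $\|u(t)-c\|_{L^\infty}\le 2\delta\,e^{-\eta(t-T'_\delta)}$ for $t\ge T'_\delta$; combining with the trivial bound $\|u(t)-c\|_{L^\infty}\le s_2-s_1$ on $[0,T'_\delta]$ produces \eqref{convergecne:rate:to:10000} after enlarging the constants. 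The principal obstacle is the trapping step: the $L^1$-convergence furnished by the $\omega$-limit argument does not by itself exclude thin spikes of $u$ far from $c$, and suppressing them uses both the pointwise ODE structure of $(P)$ and the fact that, under the hypothesis $c>s^*$, $c$ is an isolated root of $f(v)=f(c)$ in $[s_1,s_2]$, which produces the uniform gap $\gamma(\delta)$.
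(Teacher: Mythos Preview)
Your proof is correct and follows essentially the same approach as the paper: identify $\omega(u_0)=\{c\}$ via the structure of stationary solutions, use $\lambda(t)\to f(c)$ together with ODE barrier arguments to trap $u$ in an interval on which $f'<0$, and then contract exponentially. The paper organizes the last two steps slightly differently---it first traps $u$ in the half-line $(-\infty,s_*]$ (resp.\ $[s^*,\infty)$) via auxiliary autonomous ODEs (Lemma~\ref{lemma:test:thunghiem}), and then contracts the extremal trajectories $Y(t;s_1),Y(t;s_2)$ rather than pairwise differences $u(x,t)-u(y,t)$---but these are equivalent variants of the same argument.
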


\begin{theorem}\label{hequa:chinh4}
Let $u_0 \in L^\infty(\Omega)$ satisfy
$s_* \le \langle u_0\rangle  \le s^*$.
Assume
that
\begin{equation}\label{giathietchodinhlyso222dg}
|\{x: u_0(x)=s \}|=0 \mbox{~~for all~~}s \in \R.
\end{equation}
Then $\omega(u_0)$ contains
only one element $\varphi$ and it
is a step function that takes at most two values. More precisely one of the following holds:
\begin{enumerate}[label=\emph{(\alph*)}]
\item there exist   $a_-, a_+$ with $s_* < a_-< m, M < a_+ < s^*, f(a_-)=f(a_+)$ and disjoint subsets $\Omega_-, \Omega_+ \subset \Omega$
with $\Omega_- \cup \Omega_+=\Omega$ satisfying
$$\varphi(x)=a_- \chi_{\Omega_-}(x)+a_+ \chi_{\Omega_+}(x) \mbox{~~for a.e.~~}x\in\Omega,$$
where $\chi_A(x)$ denotes the characteristic function of a set $A \subset \Omega$;
\item $\varphi(x) \in \{m, s^*\} \mbox{~~for a.e.~~}x\in\Omega$;
\item $\varphi(x) \in \{s_*, M\} \mbox{~~for a.e.~~}x\in\Omega$.
\end{enumerate}
\end{theorem}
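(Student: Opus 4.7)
The plan is to exploit the order-preserving structure of the pointwise ODE in $(P)$ to reduce the identification of $\omega(u_0)$ to a one-dimensional problem for a monotone limit map $\Phi_\infty$, and then to combine constancy of the energy $E$ on $\omega(u_0)$ with the non-degeneracy hypothesis \eqref{giathietchodinhlyso222dg} and the instability of the middle root of $f(\cdot) = c^*$ to pin down the profile and its uniqueness. Because the right-hand side of $\partial_t u(x, t) = f(u(x, t)) - c(t)$ with $c(t) := \langle f(u(\cdot, t))\rangle$ depends on $x$ only through $u(x, t)$, uniqueness of scalar ODEs gives $u(\cdot, t) = \Phi_t(u_0(\cdot))$ a.e., where $\Phi_t : [s_1, s_2] \to [s_1, s_2]$ is the strictly increasing $C^1$ time-$t$ map of $\dot\xi = f(\xi) - c(s)$. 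For $\varphi \in \omega(u_0)$ with $u(t_n) \to \varphi$ in $L^1$, Helly's selection theorem gives, along a subsequence, pointwise convergence $\Phi_{t_n} \to \Phi_\infty$ at every continuity point of the non-decreasing limit $\Phi_\infty$; non-atomicity of the pushforward measure $\mu_{u_0}$ (from \eqref{giathietchodinhlyso222dg}) makes the countable set of jumps $\mu_{u_0}$-null, so dominated convergence yields $\varphi = \Phi_\infty \circ u_0$ a.e. By Theorem \ref{principal:theorem:to:kai:po}, $f(\varphi) \equiv c^* := \langle f(\varphi) \rangle$, so $\Phi_\infty$ takes values in $f^{-1}(c^*)$, and a case analysis on $c^*$ using $\bf (F_1), (F_2)$ together with $\langle \varphi \rangle = \langle u_0\rangle \in [s_*, s^*]$ rules out $c^* \notin [f(m), f(M)]$ and leaves three regimes: $c^* = f(M)$ giving values in $\{s_*, M\}$ (case (c)); $c^* = f(m)$ giving values in $\{m, s^*\}$ (case (b)); and $c^* \in (f(m), f(M))$ with values in $\{\alpha, \beta, \gamma\} = f^{-1}(c^*)$, $s_* < \alpha < m < \beta < M < \gamma < s^*$ (the three-value regime to be excluded).

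\textbf{Exclusion of the middle value (main obstacle).} In the three-value regime one must show that $\Phi_\infty$ does not take the middle value $\beta$ on a set of positive $\mu_{u_0}$-measure. Otherwise, monotonicity of $\Phi_\infty$ and non-atomicity of $\mu_{u_0}$ force $\{\Phi_\infty = \beta\}$ to contain an open subinterval $(v_1, v_2)$. Choosing $v_1 < v' < v'' < v_2$, both $\Phi_{t_n}(v'), \Phi_{t_n}(v'') \to \beta$, so the strictly positive quantity $\delta(t_n) := \Phi_{t_n}(v'') - \Phi_{t_n}(v')$ tends to $0$. Meanwhile $\delta(t)$ solves $\dot \delta = f'(\xi(t))\, \delta$ with $\xi(t) \in (\Phi_t(v'), \Phi_t(v''))$ given by the mean value theorem, so $\delta(t_n) \to 0$ would force $\int_0^{t_n} f'(\xi(s))\, ds \to -\infty$. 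To rule this out I would combine the instability $f'(\beta) > 0$ with the $L^1$-convergence $u(t_n) \to \varphi$ (which gives $c(t_n) \to c^*$) and the Lyapunov identity $\dot E = -|\Omega|\operatorname{Var}(f(u)) \in L^1([0, \infty))$: the aim is to show that, because $\xi(t_n)$ is close to $\beta$ and the drift $f(\cdot) - c(t)$ of the scalar ODE approaches its autonomous limit, trajectories must remain in a neighborhood of $\beta$ inside $(m, M)$ on a backward time-window long enough to prevent the integral from diverging to $-\infty$. I expect this delicate analysis of the non-autonomous scalar flow near the unstable equilibrium $\beta$ to be the technical heart of the proof.

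\textbf{Uniqueness via energy convexity and connectedness.} Once the middle value has been excluded, each $\varphi \in \omega(u_0)$ is a step function taking two values, from one of $\{a_-(c^*), a_+(c^*)\}$, $\{m, s^*\}$, or $\{s_*, M\}$, with a threshold uniquely determined by the mass constraint $\langle \varphi \rangle = \langle u_0 \rangle$ (continuity of the distribution function of $u_0$ providing the uniqueness). Hence $\varphi$ is determined by $c^*$ alone, and the map $c \mapsto \varphi(c)$ is continuous and injective into $L^1(\Omega)$. A direct computation using $F'(a_\pm) = f(a_\pm) = c$ and differentiating the mass constraint gives
\[
\frac{d E(\varphi(c))}{dc} = |\Omega|\, p'(c) \int_{a_-(c)}^{a_+(c)} (f(s) - c)\, ds,
\]
with $p'(c) < 0$ (where $p(c)$ is the mass fraction at the lower value) and the integral factor strictly decreasing in $c$ (its derivative equals $-(a_+(c) - a_-(c)) < 0$), with opposite signs at the endpoints $c = f(m)$ and $c = f(M)$. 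So $E(\varphi(c))$ is strictly monotone on each side of a unique minimum (the Maxwell equal-area value). Since $E$ is a continuous Lyapunov function, it is constant on the connected set $\omega(u_0) \subset L^1(\Omega)$, so the continuous image $c^*(\omega(u_0)) \subset \R$ is an interval on which $E(c)$ is constant; strict monotonicity of $E$ on each side of its unique minimum forces this interval to be a single point, concluding the proof.
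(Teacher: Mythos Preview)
Your reduction of each $\omega$-limit to the form $\varphi=\Phi_\infty\circ u_0$ via Helly's theorem and the non-atomicity hypothesis is correct and is essentially a direct version of the paper's rearrangement step. The case analysis on $c^*$ is also right. The genuine gap is precisely where you flag it: the exclusion of the middle value $\beta$.

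Your proposed attack --- controlling $\int_0^{t_n} f'(\xi(s))\,ds$ by a local analysis near the unstable equilibrium together with the $L^1$ bound on $\dot E$ --- is too vague and, as you suspect, delicate. The paper bypasses all of this with one clean reduction that you are missing: if some $\varphi\in\omega(u_0)$ has $\langle f(\varphi)\rangle\in(f(m),f(M))$, then the solution enters the invariant slab $[s_*,s^*]$ in \emph{finite time}. Indeed, by the Lyapunov argument you already invoke, $\lambda(t)$ stays within $\delta$ of $c^*$ on arbitrarily long time windows; on such a window, comparison with the autonomous ODEs $\dot Z=f(Z)-c^*\pm\delta$ squeezes every trajectory $Y(t;s)$ into $[\alpha_--\varepsilon,\alpha_++\varepsilon]\subset(s_*,s^*)$. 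Once $s_*\le u(\cdot,t_1)\le s^*$, one has $f(m)\le\lambda(t)\le f(M)$ for all $t\ge t_1$, hence $\mathcal L(m)\ge 0$ and $\mathcal L(M)\le 0$, so the sets $\{Y(t;\cdot)\le m\}$ and $\{Y(t;\cdot)\ge M\}$ are monotone increasing in $t$. Consequently any trajectory with $\Phi_{t_n}(v)\to\beta\in(m,M)$ is trapped in $(m,M)$ for \emph{all} $t\ge t_1$, where $f'>0$; your difference $\delta(t)=\Phi_t(v'')-\Phi_t(v')$ is then strictly increasing, and $\delta(t_n)\to 0$ is an immediate contradiction --- no asymptotic analysis is needed.

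The same reduction also trivializes the uniqueness step: once the partition $\Omega_-(\infty)\cup\Omega_+(\infty)$ is known to be \emph{fixed} (it comes from the monotone limit of the trapping sets above, not from the choice of $\varphi$), every $\varphi\in\omega(u_0)$ has the form $a_-(c)\chi_{\Omega_-(\infty)}+a_+(c)\chi_{\Omega_+(\infty)}$, and since $a_-(c)$ and $a_+(c)$ are both strictly decreasing in $c$, mass conservation $a_-(c)|\Omega_-(\infty)|+a_+(c)|\Omega_+(\infty)|=\int_\Omega u_0$ pins down $c$ uniquely. Your energy-monotonicity computation is correct as far as the formula goes, but the claim $p'(c)<0$ is unjustified (both $a_-$ and $a_+$ decrease with $c$, so the sign of $p'$ is not obvious), and in any case this detour is unnecessary once you have the fixed partition.
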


\begin{remark}
If $\omega(u_0)$ contains
only one
element $\varphi$, then for all $p \in [1, \infty)$,
$$u(t) \to \varphi \mbox{~~in~~} L^p(\Omega) \mbox{~~as~~} t \to \infty.$$
\end{remark}

The proof of Theorem \ref{hequa:chinh4} is presented in two steps.
First we consider the special case
where $u_0$ satisfies the following pointwise inequalities:
$$s_* \le u_0(x) \le s^* \mbox{~~for a.e.~~}x\in\Omega,$$
and then prove the theorem for the general case.
The results in the special case are stated in Theorem \ref{principal:theorem3}.

One of the difficulties of Problem $(P)$ is the presence of the nonlocal term $\langle f(u) \rangle$. Because of this term, we do not have the standard comparison principle for solutions
of $(P)$, or $(P')$. Another difficulty is
due to the lack of diffusion term, so that
the solution in general is not smooth at all in space. Furthermore, since the solution can develop many sharp transition layers at least in some cases,
the solution does not necessarily remain bound in $BV(\Omega)$, which makes it difficult to prove the relative compactness of the solution
 even in $L^1(\Omega)$.
This difficulty is overcome by considering the unidimensional equi-measurable rearrangement $u^\sharp$ and showing that
it is a solution of a one-dimensional problem
$(P^\sharp)$ to be introduced in Subsection \ref{chap22:subsec4.2}. Since the orbit $\{u^\sharp(t): t \ge 0\}$ is bounded in $BV(\Omega^\sharp)$, where $\Omega^\sharp:=(0, |\Omega|) \subset \R$, it is relatively compact in $L^1(\Omega^\sharp)$.
We then prove that
the convergence of  $u^\sharp(t_n)$ in  $L^1(\Omega^\sharp)$ is equivalent to the convergence of  $u(t_n)$ in $L^1(\Omega)$.  These two facts establish
the relative compactness of $\{u(t): t \ge 0\}$ in $L^1(\Omega)$.

As we will show in Section \ref{section4ch}, Problem $(P)$ possesses a Lyapunov functional  $E(u)$. This and the relative compactness of the solution orbit
imply that the $\omega$-limit set of the solution is nonempty and consists only of stationary solutions of ${(P)}$. However, another difficulty of Problem $(P)$ is that there are too many (actually a continuum) of stationary solutions. This makes it difficult to find a good characterization of the $\omega$-limit set of solutions.
This difficulty will be overcome by careful observation of the dynamics of solutions near the unstable zero of the function
$f(s) - \langle f(u(\cdot,t))\rangle$.

The paper is organized as follows: In Section 2, we prove the existence and uniqueness of the solution of $(P)$ and prove Theorem \ref{principal:theorem}. In Section \ref{chap22:sec4}, using the monotone rearrangement theory, we introduce the one-dimensional problem $(P^\sharp)$ and study the relation between the behavior of solutions of $(P)$ and that of $(P^\sharp)$, which is an essential ingredient for proving Theorem \ref{principal:theorem:to:kai:po}\,(i). In Section \ref{section4ch}, we prove Theorem \ref{principal:theorem:to:kai:po}\,(ii) by using the Lyapunov functional  $E(u)$. We also prove Theorem \ref{theorem:multi-stable:case} for multi-stable nonlinearity in this section. In Section \ref{section:principal123:theorem2}, we prove  Theorem \ref{principal:theorem2}.
Finally, in Section \ref{section:chungminh:menhde:coban}, we prove Theorem \ref{hequa:chinh4}.


\section{Proof of Theorem \ref{principal:theorem}}\label{chap22:sec2}
In this section we prove the global existence and uniqueness of solutions of $(P)$ (or more precisely $(P')$),
which will be done in two steps. In Subsection \ref{subsection2.1},
we only assume that $f$ is locally
Lipschitz continuous and prove the existence of a local-in-time solution as well as its uniqueness. This result is an immediate consequence of the local Lipschitz continuity of the nonlocal nonlinear term in the space $L^\infty(\Omega)$. Then, in Subsection \ref{subsection2.2}, with an additional assumption,
 we prove the uniform boundedness of the solution, which implies the global existence.
Finally in Subsection \ref{subsec:proof:theorem1:mass:conservation}, we apply the results of Subsections \ref{subsection2.1} and \ref{subsection2.2}  to prove Theorem \ref{principal:theorem}.


\subsection{Local existence and uniqueness}\label{subsection2.1}
In this subsection, we always assume that $f: \R \to \R$ is locally Lipschitz continuous.
\begin{lemma}\label{lem:locally:existence:f:lipschitz}
Assume that $f$ is locally Lipschitz continuous. Then for each $u_0 \in L^\infty(\Omega)$, $(P')$ has a unique local-in-time solution.
\end{lemma}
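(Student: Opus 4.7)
The plan is to invoke the standard Picard--Lindel\"of theorem for ODEs in a Banach space. Since $(P')$ is an autonomous ODE on the Banach space $L^\infty(\Omega)$ with right-hand side $H(u) = f(u) - \langle f(u) \rangle$, all I need to show is that $H$ is locally Lipschitz from $L^\infty(\Omega)$ to itself; then local existence and uniqueness are automatic.

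First I would check that $H$ does map $L^\infty(\Omega)$ into itself: if $u \in L^\infty(\Omega)$ with $\|u\|_\infty \le R$, then $f(u)$ is bounded by $\max_{[-R,R]} |f|$, and the same bound controls $\langle f(u) \rangle$, so $H(u) \in L^\infty(\Omega)$. Next, the local Lipschitz estimate. Fix $R > 0$ and let $L_R$ denote the Lipschitz constant of $f$ on $[-R, R]$. For $u, v \in L^\infty(\Omega)$ with $\|u\|_\infty, \|v\|_\infty \le R$, we have $|f(u(x)) - f(v(x))| \le L_R |u(x) - v(x)|$ pointwise, so $\|f(u) - f(v)\|_\infty \le L_R \|u - v\|_\infty$; averaging gives $|\langle f(u)\rangle - \langle f(v)\rangle| \le L_R \|u - v\|_\infty$. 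Combining,
\begin{equation*}
\|H(u) - H(v)\|_{L^\infty(\Omega)} \le 2 L_R \|u - v\|_{L^\infty(\Omega)}.
\end{equation*}

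With this in hand, I would set $R := \|u_0\|_\infty + 1$, pick $T > 0$ small enough that $2 L_R T < 1$ and $T \cdot \sup_{\|w\|_\infty \le R} \|H(w)\|_\infty \le 1$, and run the usual contraction-mapping argument on the closed ball $\{u \in C([0,T]; L^\infty(\Omega)) : \|u(t) - u_0\|_\infty \le 1 \text{ for all } t \in [0,T]\}$ with the operator $(\mathcal{T} u)(t) := u_0 + \int_0^t H(u(s))\,ds$. The two conditions on $T$ guarantee respectively that $\mathcal{T}$ is a strict contraction and that it maps the ball into itself. The unique fixed point $u \in C([0,T]; L^\infty(\Omega))$ satisfies the integral equation, and since $s \mapsto H(u(s))$ is continuous in $L^\infty(\Omega)$, differentiating gives $u \in C^1([0,T); L^\infty(\Omega))$ with $du/dt = H(u)$ and $u(0) = u_0$, so $u$ solves $(P')$.

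Uniqueness on any common interval of existence follows by a direct Gr\"onwall argument: if $u, v$ are two solutions, they stay bounded by some $R'$ on a small interval, and then $\|u(t) - v(t)\|_\infty \le 2 L_{R'} \int_0^t \|u(s) - v(s)\|_\infty\,ds$ forces $u \equiv v$. I expect no serious obstacle here; the entire argument is the textbook Cauchy--Lipschitz theorem, and the only substantive observation is the trivial fact that the averaging functional $w \mapsto \langle w \rangle$ is a bounded linear operator on $L^\infty(\Omega)$, which is why the nonlocal term does not disrupt the standard proof.
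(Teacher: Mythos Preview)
Your proposal is correct and follows exactly the same approach as the paper: show that $H(u)=f(u)-\langle f(u)\rangle$ is locally Lipschitz on $L^\infty(\Omega)$ and then invoke the standard Picard--Lindel\"of theorem for ODEs in a Banach space. The paper's proof is simply a terser version of yours, omitting the explicit Lipschitz constant and the contraction-mapping details that you spell out.
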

\begin{proof}
Since $f: \R \to \R$ is locally Lipschitz continuous, the Nemytskii
operator $u \mapsto f(u)$ is clearly locally Lipschitz in
$L^\infty(\Omega)$. Therefore $u \mapsto \langle f(u) \rangle$ is
also locally Lipschitz since $\Omega$ is bounded.
It follows that the map $H: L^\infty(\Omega) \to L^\infty(\Omega)$
defined in \eqref{term:nonlocal-nonlinear} is locally Lipschitz
continuous. Hence the assertion of the lemma follows from the
standard theory of ordinary differential equations.
\end{proof}

Denote by $[0, T_{max}(u_0))$, with $0<T_{max}(u_0) \le \infty$,
the maximal time interval for the existence of the solution of
$(P')$ with initial data $u_0$. Then the following lemma also
follows from the standard theory of ordinary differential equations.

\begin{lemma}  
Let $u \in C^1([0, T_{max}(u_0)); L^\infty(\Omega))$ be the solution of $(P')$ with $u_0 \in L^\infty(\Omega)$. If $T_{max}(u_0) <\infty$, then
$$\displaystyle\limsup_{t \uparrow T_{max}(u_0)} \|u(t)\|_{L^\infty(\Omega)}=\infty.$$
\end{lemma}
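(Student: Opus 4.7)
The plan is to argue by contradiction, following the standard blow-up alternative for ODEs in a Banach space, adapted to the map $H : L^\infty(\Omega) \to L^\infty(\Omega)$. Suppose $T_{max}(u_0) < \infty$ but $\limsup_{t \uparrow T_{max}(u_0)} \|u(t)\|_{L^\infty(\Omega)} = R < \infty$. Then there is some $R' > R$ with $\|u(t)\|_{L^\infty(\Omega)} \le R'$ for all $t \in [0, T_{max}(u_0))$.

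First, I would exploit the local Lipschitz continuity of $H$ established in the proof of Lemma 2.1 to choose a constant $M$ with $\|H(v)\|_{L^\infty(\Omega)} \le M$ whenever $\|v\|_{L^\infty(\Omega)} \le R'+1$, and a Lipschitz constant $L$ for $H$ on the same closed ball. Since $du/dt = H(u(t))$ in $L^\infty(\Omega)$, the bound $\|du/dt\|_{L^\infty(\Omega)} \le M$ gives
\[
\|u(t) - u(s)\|_{L^\infty(\Omega)} \le M\,|t-s| \quad \text{for all } s,t \in [0, T_{max}(u_0)),
\]
so $u$ is uniformly continuous with values in the Banach space $L^\infty(\Omega)$. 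By completeness, $u^* := \lim_{t \uparrow T_{max}(u_0)} u(t)$ exists in $L^\infty(\Omega)$, and clearly $\|u^*\|_{L^\infty(\Omega)} \le R'$.

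Next, I would apply Lemma \ref{lem:locally:existence:f:lipschitz} to the Cauchy problem with initial datum $u^*$: it yields a solution $\widetilde u \in C^1([0,\delta]; L^\infty(\Omega))$ of $d\widetilde u/dt = H(\widetilde u)$, $\widetilde u(0)=u^*$, for some $\delta > 0$. Gluing $u$ on $[0, T_{max}(u_0))$ with the shifted solution $t \mapsto \widetilde u(t - T_{max}(u_0))$ on $[T_{max}(u_0), T_{max}(u_0)+\delta]$ produces a $C^1$ extension (continuity of the derivative at $T_{max}(u_0)$ follows from the continuity of $H$ together with $u(t) \to u^*$). This contradicts the maximality of $T_{max}(u_0)$.

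The only delicate point is verifying that the glued function is genuinely of class $C^1$ at the endpoint $T_{max}(u_0)$, rather than merely continuous; this is handled by the continuity of $H$ on $L^\infty(\Omega)$, which forces $H(u(t)) \to H(u^*) = H(\widetilde u(0)) = d\widetilde u/dt|_{t=0}$. Otherwise the argument is entirely routine and relies only on local Lipschitz continuity of $H$ together with the completeness of $L^\infty(\Omega)$, with no ingredient specific to the nonlocal structure of $H$.
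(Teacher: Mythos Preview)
Your argument is correct and is precisely the standard blow-up alternative for ODEs in a Banach space; the paper does not actually give a proof of this lemma, stating only that it ``follows from the standard theory of ordinary differential equations,'' so your write-up supplies exactly the routine details the authors omit. One small remark: your passage from $\limsup_{t\uparrow T_{max}}\|u(t)\|_{L^\infty}=R<\infty$ to a uniform bound $\|u(t)\|_{L^\infty}\le R'$ on all of $[0,T_{max})$ implicitly uses the continuity of $u$ on the compact interval $[0,t_0]$ for some $t_0$ close to $T_{max}$, which is of course immediate, but you might make that explicit.
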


\begin{corollary} \label{coro:for:global:existence}
If there exists $C>0$ such that
$$ \|u(t)\|_{L^\infty(\Omega)} \le C \mbox{~~for all~~} t \in [0, T_{max}(u_0)),$$
 then $T_{max}(u_0)=\infty$.
\end{corollary}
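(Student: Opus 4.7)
The plan is to argue by contraposition, relying entirely on the blow-up alternative that is the statement of the preceding lemma. Suppose, toward a contradiction, that $T_{max}(u_0)<\infty$. The lemma then forces
\[
\limsup_{t\uparrow T_{max}(u_0)}\|u(t)\|_{L^\infty(\Omega)}=\infty,
\]
so in particular there exists a sequence $t_n\uparrow T_{max}(u_0)$ along which $\|u(t_n)\|_{L^\infty(\Omega)}\to\infty$. This directly contradicts the hypothesis that $\|u(t)\|_{L^\infty(\Omega)}\le C$ for every $t\in[0,T_{max}(u_0))$. Therefore $T_{max}(u_0)=\infty$, as claimed.

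There is no real obstacle here: the corollary is a purely formal consequence of the blow-up alternative stated just above it, and requires neither the explicit form of the nonlocal nonlinearity $H$ nor any property of $f$ beyond what was already used to set up local existence. The only thing to be careful about is that the bound $C$ is assumed to hold on the entire half-open interval $[0,T_{max}(u_0))$, which is exactly what is needed to contradict the $\limsup$ condition; a bound on a strictly smaller interval would not suffice. One could equivalently phrase the argument by noting that if $\|u(t)\|_{L^\infty(\Omega)}\le C$ on $[0,T_{max}(u_0))$, then the local existence result of Lemma~\ref{lem:locally:existence:f:lipschitz} applied with initial time $T_{max}(u_0)-\delta$ (for $\delta$ small) would extend the solution past $T_{max}(u_0)$, contradicting its maximality; but the contrapositive formulation via the blow-up alternative is the cleanest presentation.
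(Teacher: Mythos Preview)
Your argument is correct and is exactly the intended one: the paper states this corollary immediately after the blow-up lemma without giving any separate proof, since it is the obvious contrapositive. Nothing further is needed.
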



\subsection{Uniform bounds and global existence}\label{subsection2.2}

In this subsection, we continue to assume that $f$ is locally Lipschitz continuous. We fix $u_0 \in L^\infty(\Omega)$ arbitrarily, and write $T_{max}$
instead of $T_{max}(u_0)$ for simplicity.
Set
\begin{equation}\label{definition:lamda:t}
\lambda(t)=\langle f(u(t)) \rangle \mbox{~~for all~~} t \in [0, T_{max}),
\end{equation}
and study solutions $Y(t;s)$ of the following auxiliary problem:
\begin{equation}
(ODE) \,\,
\begin{cases}
\dot{Y}= f(Y)-\lambda(t), \quad t > 0,\vspace{6pt}\\
Y(0)=s,
\end{cases}
\end{equation}
where $\dot{Y}:=dY/dt$.   Since $u \in C^1([0, T_{max}); L^\infty(\Omega))$, it is easy to see that there exits a function $u^*: \Omega \times [0, T_{max}) \to \R$ satisfying
for all $t \in [0, T_{max})$
$$u^*(x,t)=u(x,t) \mbox{~~for a.e.~~}x\in\Omega,$$
and
$$\dfrac{\partial u^*}{\partial t}(x,t)=f(u^*(x,t))-\lambda(t) \mbox{~~for a.e.~~}x\in \Omega \mbox{~~and for all~~} t \in [0, T_{max}).$$
Consequently,
for all $t \in [0, T_{max})$,
\begin{equation}\label{problem:ODE:22:6:3:14}
u(x,t)=Y(t;u_0(x)) \mbox{~~for a.e.~~} x \in \Omega.
\end{equation}

The lemma below proves the monotonicity of $Y(t;s)$ in $s$.

\begin{lemma}\label{cor:monotonicity:of:Y}
Let $\widetilde s <s$ and let $0<T<T_{max}$. Assume that Problem
$(ODE)$ with initial conditions $\widetilde s, s$ possesses the
solutions $Y(t;\widetilde s), Y(t;s) \in C^1([0, T])$, respectively.
Then
\begin{equation}\label{ine:monotone:Y}
Y(t;\widetilde s)<Y(t;s) \mbox{~~for all~~} t  \in [0, T].
\end{equation}
\end{lemma}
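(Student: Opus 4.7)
The plan is to deduce the strict monotonicity directly from uniqueness of solutions of the scalar non-autonomous ODE $\dot Y=f(Y)-\lambda(t)$. Since $u\in C^1([0,T_{\max});L^\infty(\Omega))$, the forcing $\lambda(t)=\langle f(u(t))\rangle$ is continuous on $[0,T]$, and both $Y(\cdot;\widetilde s)$ and $Y(\cdot;s)$ are $C^1$ solutions of this ODE on $[0,T]$ driven by the \emph{same} $\lambda$. By compactness of $[0,T]$, their trajectories are contained in a common compact set $K\subset\R$ on which the locally Lipschitz function $f$ admits a single Lipschitz constant $L$.

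First I would note that $Y(0;\widetilde s)=\widetilde s<s=Y(0;s)$, so by continuity the strict inequality \eqref{ine:monotone:Y} persists on a maximal subinterval $[0,t_0)\subset[0,T]$. The goal is to show that in fact $t_0>T$, i.e.\ that the inequality does not break down on the whole closed interval. The main step is a contradiction argument: assume $t_0\in(0,T]$ is finite. Then by continuity $Y(t_0;\widetilde s)=Y(t_0;s)=:y_0$. Reversing time via $\bar Y(\tau;\cdot):=Y(t_0-\tau;\cdot)$ for $\tau\in[0,t_0]$, both $\bar Y(\cdot;\widetilde s)$ and $\bar Y(\cdot;s)$ satisfy the backward problem
\[
\dot{\bar Y}(\tau)=-f(\bar Y(\tau))+\lambda(t_0-\tau),\qquad \bar Y(0)=y_0,
\]
on $[0,t_0]$, whose right-hand side is continuous in $\tau$ and Lipschitz in $\bar Y$ with constant $L$ on $K$. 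Picard--Lindel\"of uniqueness then forces $\bar Y(\tau;\widetilde s)=\bar Y(\tau;s)$ for every $\tau\in[0,t_0]$; evaluating at $\tau=t_0$ gives $\widetilde s=s$, which contradicts $\widetilde s<s$. Hence $Y(t;\widetilde s)<Y(t;s)$ throughout $[0,T]$.

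An equally short alternative is a direct Gr\"onwall estimate on $w(t):=Y(t;s)-Y(t;\widetilde s)$: using $|\dot w(t)|=|f(Y(t;s))-f(Y(t;\widetilde s))|\leq L|w(t)|$ one obtains $w(t)\geq w(0)e^{-Lt}>0$ as long as $w$ stays positive, and a continuity argument then rules out the possibility of $w$ reaching zero inside $[0,T]$.

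As for the main obstacle, there is essentially none: this is a classical monotonicity-of-flow statement. The only point requiring a line of care is that $f$ is merely locally Lipschitz, not globally so; this is handled at once by the remark above that the two trajectories stay in a fixed compact set over the compact time interval $[0,T]$, so a single Lipschitz constant is available throughout the argument.
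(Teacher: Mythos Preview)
Your argument is correct and is essentially the same as the paper's: the paper simply notes that since $Y(0;\widetilde s)<Y(0;s)$, the strict inequality follows immediately from backward uniqueness of solutions of $(ODE)$, which is exactly what your time-reversal contradiction argument establishes in detail. Your alternative Gr\"onwall proof is also valid but is not the route taken in the paper.
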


\begin{proof}
Since $Y(0;\widetilde s)=\widetilde s<s=Y(0;s)$, the assertion follows immediately from the backward uniqueness of solution of $(ODE)$.
\end{proof}

\begin{lemma}\label{lem:ab-bound}
Let $f: \R \to \R$ be a locally Lipschitz function and let $u$ be the solution of $(P')$ with initial data $u_0\in L^{\infty}(\Omega)$. Assume that there exist $a \leq b$ such that \begin{equation}\label{ab-f}
f(a) \geq  f(s) \geq f(b) \ \ \hbox{for} \ \ s\in[a, b]
\end{equation}
and that
\begin{equation}\label{ab-u0}
a \leq u_0(x) \leq b \quad \hbox{for}\ \ a.e. \ \ x\in\Omega.
\end{equation}
Then the solution $u$ exists globally for $t\geq 0$ and it satisfies, for every $t\geq 0$, \begin{equation}\label{ab-u}
a \leq u(x,t) \leq b \quad \hbox{for}\ \ a.e. \ \ x\in\Omega.
\end{equation}
\end{lemma}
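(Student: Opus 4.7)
The plan is to use the trajectory representation $u(x,t) = Y(t; u_0(x))$ from \eqref{problem:ODE:22:6:3:14} to reduce the a.e.\ bound \eqref{ab-u} to two scalar inequalities. By the monotonicity in initial data (Lemma \ref{cor:monotonicity:of:Y}) together with the hypothesis $a \le u_0(x) \le b$ a.e., it will suffice to prove
$$Y(t;a) \ge a \quad\text{and}\quad Y(t;b) \le b \qquad \text{for all } t \in [0, T_{max}).$$
Once this is established, $u$ is uniformly bounded in $L^\infty(\Omega)$ on $[0, T_{max})$, so Corollary \ref{coro:for:global:existence} yields $T_{max} = \infty$ and \eqref{ab-u} follows.

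The two scalar inequalities will be proved by a Gronwall argument applied to the nonnegative quantity
$$\Psi(t) := \bigl(a - Y(t;a)\bigr)_+^2 + \bigl(Y(t;b) - b\bigr)_+^2, \qquad \Psi(0) = 0,$$
run on a maximal subinterval of $[0, T_{max})$ on which the scalar trajectories $Y(\cdot;a)$ and $Y(\cdot;b)$ remain in the compact set $[a-1, b+1]$; on this compact set $f$ admits some Lipschitz constant $L$. Differentiating $\Psi$ and decomposing $f(Y(t;b)) - \lambda(t) = [f(Y(t;b)) - f(b)] + [f(b) - \lambda(t)]$ (and analogously for $Y(t;a)$), the Lipschitz differences contribute $\le L\Psi$. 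The crucial ingredient is then the quantitative estimate
$$f(b) - L\bigl[(a-Y(t;a))_+ + (Y(t;b)-b)_+\bigr] \;\le\; \lambda(t) \;\le\; f(a) + L\bigl[(a-Y(t;a))_+ + (Y(t;b)-b)_+\bigr],$$
which I will derive by writing $\lambda(t) = \langle f(u(t))\rangle$, using monotonicity to confine $u(x,t) \in [Y(t;a), Y(t;b)]$ a.e., invoking hypothesis \eqref{ab-f} on the part of $\Omega$ where $u(x,t) \in [a,b]$, and using Lipschitz continuity of $f$ on the possibly nonempty intervals $[Y(t;a), a]$ and $[b, Y(t;b)]$. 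Substituting into the expression for $\dot\Psi$ yields $\dot\Psi \le C\Psi$; combined with $\Psi(0) = 0$, Gronwall forces $\Psi \equiv 0$ on the subinterval. This confines $Y(\cdot;a), Y(\cdot;b)$ to $[a,b]$, so they never approach the boundary of $[a-1, b+1]$, and standard ODE continuation extends the subinterval to all of $[0, T_{max})$.

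The main obstacle is the self-referential nature of $\lambda(t) = \langle f(u(t))\rangle$: the naive strategy of asserting $\lambda(t) \in [f(b), f(a)]$ and thus declaring $[a,b]$ forward-invariant for the scalar ODE is circular, since the bound on $\lambda$ requires the bound on $u$ one is trying to prove. The resolution is to quantify the possible excursion of $\lambda$ outside $[f(b), f(a)]$ precisely in terms of the deviations $(a - Y(t;a))_+$ and $(Y(t;b) - b)_+$ already tracked by $\Psi$, so that the Gronwall loop closes on itself.
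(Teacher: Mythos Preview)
Your argument is correct. The reduction to the two scalar inequalities via \eqref{problem:ODE:22:6:3:14} and Lemma~\ref{cor:monotonicity:of:Y} is exactly right, your two-sided estimate on $\lambda(t)$ in terms of the overshoots $(a-Y(t;a))_+$ and $(Y(t;b)-b)_+$ is the key observation, and the Gronwall loop does close: writing $p=(a-Y(t;a))_+$, $q=(Y(t;b)-b)_+$, one gets $\dot\Psi\le C(L)(p^2+pq+q^2)\le C'(L)\Psi$, so $\Psi\equiv 0$ on the working subinterval, and the continuation argument then shows this subinterval is all of $[0,T_{max})$.

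The paper proceeds differently. Rather than quantifying how far $\lambda(t)$ might stray from $[f(b),f(a)]$, it sidesteps the circularity by a truncation trick: it first replaces $f$ by $\tilde f$ which agrees with $f$ on $[a,b]$ and is constant (equal to $f(a)$, resp.\ $f(b)$) outside. For this modified nonlinearity one has $\tilde f(s)\ge \tilde f(\tilde s)$ whenever $\tilde s\ge b$, so whenever $Y(t;b)\ge b$ one gets $\lambda(t)\ge \tilde f(Y(t;b))$ directly and hence $\dot Y(t;b)\le 0$; thus $Y(t;b)\le b$ for all $t$ with no Gronwall needed. The modified problem therefore has a global solution $\tilde u$ confined to $[a,b]$, and since $\tilde f=f$ on $[a,b]$, uniqueness for $(P')$ forces $\tilde u=u$. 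Your approach is a direct ``hard'' estimate that stays with the original $f$; the paper's is a softer modification-plus-uniqueness argument. The paper's route is shorter and avoids the bookkeeping with $\Psi$, but yours has the virtue of making the mechanism (overshoot of $\lambda$ controlled by overshoot of the extremal trajectories) fully explicit.
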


\begin{proof} We will show that \eqref{ab-u} holds so long as the
solution $u$ exists.  The global existence will then follow
automatically from the local existence result in Subsection
\ref{subsection2.1}. Let $Y(t,s)$ be the solution of (ODE).
The assumption \eqref{ab-u0} and the monotonicity of $Y(t;s)$
in $s$ imply
\[
Y(t;a)\leq Y(t;u_0(x))=u(x,t) \leq Y(t;b)\quad\ a.e.\ \ x\in\Omega.
\]
Therefore, in order to prove \eqref{ab-u}, it suffices to show that
\begin{equation}\label{ab-Y}
a\leq Y(t;a), \quad\  Y(t;b)\leq b
\end{equation}
so long as the solutions $Y(t;a), Y(t;b)$ both
exist.

We first consider the special case where $f$ satisfies
\begin{equation}\label{f'<0}
f(s) = f(a)  \ \ \hbox{for} \ \ s\in(-\infty,a], \quad f(s) = f(b)  \ \ \hbox{for} \ \ s\in[b, \infty).
\end{equation}
Then we have $f(s)\geq f(\tilde s)$ for any
$s \in \R$ and $\tilde s \ge b$.
It follows that $f(Y(t;u_0))\geq f(Y(t;b))\mbox{~~a.e. in~~}\Omega$
for every $t\geq 0$ such that $Y(t;b)\geq b$. Hence
$\lambda(t) \geq f(Y(t;b))$ for every such $t$. Consequently,
\[
\dot Y(t;b) = f(Y(t;b))-\lambda(t)\leq 0 \ \ \hbox{for every $t$
such that}\ \ Y(t,b)\geq b.
\]
This, together with $Y(0;b)=b$, proves the second inequality of
\eqref{ab-Y}.  The first inequality of \eqref{ab-Y} can be
shown similarly. This establishes \eqref{ab-Y}, hence \eqref{ab-u},
under the additional assumption \eqref{f'<0}.

Next we consider the general case where \eqref{f'<0} does not
necessarily hold. Let $\tilde f: \R\to \R$ be defined by
\begin{equation*}
\tilde f(s)=
\begin{cases}
f(a) &\mbox{~~for all~~} s <a,\\
f(s) &\hbox{~for} \ s\in[a,b],\\
f(b) &\mbox{~~for all~~} s >b,
\end{cases}
\end{equation*}
and let $\tilde u(t)$ be the solution of the following problem:
\[
\left\{
\begin{array}{ll}
\dfrac{d\tilde u}{dt}=\tilde f(\tilde u)-\langle \tilde f(\tilde u)
\rangle &  t >0,\vspace{6pt}\\
\tilde u(0)=u_0.
\end{array}
\right.
\]
Then, by what we have shown above, the solution $\tilde u(t)$
exists globally for $t\geq 0$, and the following inequalities
hold for every $t\geq 0$:
\[
a \leq \tilde u(x,t) \leq b \quad \hbox{for}\ \ a.e. \ \ x\in\Omega.
\]
Since $\tilde f(s)=f(s)$ for all $s\in[a,b]$, $\tilde u$ is also
a solution of Problem $(P')$, hence $\tilde u(x,t)=u(x,t)\;(a.e.\ x\in\Omega,\;t\geq 0)$ by the uniqueness of the solution of $(P')$.
This implies \eqref{ab-u}, and the proof of Lemma \ref{lem:ab-bound}
is complete.
\end{proof}

The following corollary follows from the proof of
Lemma \ref{lem:ab-bound}.

\begin{corollary} \label{he qua:lalllaa:dfes} Assume that all
the hypotheses in Lemma \ref{lem:ab-bound} hold.
Then for all $s \in [a, b]$, Problem $(ODE)$ has a unique solution
$Y(t;s) \in C^1([0, \infty))$ and it satisfies
$$a \le Y(t;s) \le b \mbox{~~for all~~}s\in [a,b], \ t \ge 0.$$
\end{corollary}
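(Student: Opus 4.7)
The plan is to read everything off from what is already established in Lemma \ref{lem:ab-bound}, rather than running a new argument from scratch. The key preliminary observation is that by Lemma \ref{lem:ab-bound} the solution $u$ of $(P')$ exists on all of $[0,\infty)$ and satisfies $a\le u(x,t)\le b$ a.e.\ for every $t\ge 0$. In particular, since $u\in C^1([0,\infty);L^\infty(\Omega))$ and $f$ is locally Lipschitz on the bounded interval $[a,b]$, the function $\lambda(t)=\langle f(u(t))\rangle$ is well defined and continuous (indeed $C^1$) on $[0,\infty)$. Consequently $(ODE)$ is a genuine nonautonomous ODE with a continuous-in-$t$, locally-Lipschitz-in-$Y$ right-hand side, so standard Picard--Lindel\"of yields local existence and uniqueness of $Y(t;s)$ for every $s\in\R$. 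Uniqueness in the statement is therefore free, and only global existence together with the bound $a\le Y(t;s)\le b$ has to be proved.

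For the bound and globality I would reuse the barrier step from the proof of Lemma \ref{lem:ab-bound} verbatim, now with a general $s\in[a,b]$ in place of the two distinguished endpoints. Introduce the truncation $\tilde f$ as in that proof (equal to $f$ on $[a,b]$, constant outside). Since $\tilde u=u$ a.e.\ by the uniqueness argument of Lemma \ref{lem:ab-bound}, we also have $\lambda(t)=\langle \tilde f(u(t))\rangle=\langle\tilde f(\tilde u(t))\rangle$. Because $\tilde f$ is globally bounded and Lipschitz, the auxiliary ODE $\dot{\tilde Y}=\tilde f(\tilde Y)-\lambda(t),\ \tilde Y(0)=s$ admits a unique global solution $\tilde Y(t;s)$ on $[0,\infty)$ for every $s\in\R$. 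Now the barrier argument used in the special case \eqref{f'<0} of the proof of Lemma \ref{lem:ab-bound} applies: whenever $\tilde Y(t;s)\ge b$ one has $\tilde f(\tilde Y(t;s))=f(b)\le \tilde f(u(x,t))$ a.e.\ in $\Omega$, hence $\lambda(t)\ge \tilde f(\tilde Y(t;s))$, hence $\dot{\tilde Y}(t;s)\le 0$; combined with $\tilde Y(0;s)=s\le b$ this forces $\tilde Y(t;s)\le b$ for all $t\ge 0$. Symmetrically, $\tilde Y(t;s)\ge a$. Thus $\tilde Y(\cdot;s)$ stays in $[a,b]$, where $\tilde f$ coincides with $f$, so $\tilde Y(\cdot;s)$ is in fact a $C^1$ solution of the original $(ODE)$ on $[0,\infty)$. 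By the local uniqueness of Step~1, $Y(t;s)=\tilde Y(t;s)$ as long as $Y(t;s)$ exists, which extends $Y(\cdot;s)$ to all of $[0,\infty)$ inside $[a,b]$, giving both the global existence and the desired two-sided bound.

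I do not expect any real obstacle: the proof is essentially a repackaging of the argument already carried out for Lemma \ref{lem:ab-bound}, with the only point of care being to keep $\lambda(t)$ fixed (the one coming from the solution $u$ with initial datum $u_0$) while letting the initial value $s$ of the ODE vary over the whole interval $[a,b]$. The barrier comparison needed to run the argument for arbitrary $s$ rather than just $s=a$ or $s=b$ is exactly the one already used, since the inequality $\tilde f(u(x,t))\ge f(b)$ (resp.\ $\le f(a)$) holds as soon as $u(x,t)\in[a,b]$, which is guaranteed for all $t\ge 0$ by Lemma \ref{lem:ab-bound}.
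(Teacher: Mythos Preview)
Your proposal is correct and is essentially what the paper has in mind: the corollary is stated as an immediate consequence of the proof of Lemma~\ref{lem:ab-bound}, and you have simply unpacked that. The only remark is that one can shortcut your barrier rerun for general $s$: since the proof of Lemma~\ref{lem:ab-bound} already establishes \eqref{ab-Y}, namely $a\le Y(t;a)$ and $Y(t;b)\le b$ for all $t\ge 0$, the monotonicity of $Y(t;\cdot)$ from Lemma~\ref{cor:monotonicity:of:Y} immediately gives $a\le Y(t;a)\le Y(t;s)\le Y(t;b)\le b$ for every $s\in[a,b]$, yielding both the bound and global existence without repeating the $\tilde f$-truncation step for general $s$.
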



\subsection{Proof Theorem \ref{principal:theorem}}
\label{subsec:proof:theorem1:mass:conservation}

\begin{proof}[\bf Proof Theorem \ref{principal:theorem}]
The local existence and uniqueness follow from Lemma 
\ref{lem:locally:existence:f:lipschitz}. 
Let $s_1, s_2$ be as in $\bf (H)$. We have
$$f(s_1)\ge f(s) \ge f(s_2) \mbox{~~for all~~}s \in [s_1, s_2].$$
Set $a:=s_1, b:=s_2$. Then the assertion
\eqref{ine:for:solution:in-thm:s1:s2} follows from
Lemma \ref{lem:ab-bound}. 
This and Corollary \ref{coro:for:global:existence} imply global 
existence. Next we prove the mass conservation
property \eqref{mass:conservation:modify}.
Integrating the equation in Problem $(P')$ from
$0$ to $t$, we obtain
$$u(t)-u(0)=\int_0^t u_t \,d\tau =\int_0^t [ f(u) - \langle f(u)\rangle] \,d\tau,$$
so that
 $$\int_\Omega u(x,t)\,dx-\int_\Omega u_{0}(x)\,dx=\int_0^t \int_{\Omega}[ f(u) - \langle f(u)\rangle ] \,dxd\tau=0.$$
This completes the proof of Theorem \ref{principal:theorem}.
\end{proof}

\begin{corollary} \label{moiquanheproblemP-ODE}
Let $u$ be the solution of Problem $(P')$ with
$u_0 \in L^\infty(\Omega)$ and let $t_1 \ge 0$. Then
\begin{enumerate}[label=\emph{(\roman*)}]
\item if $s_* \le u(x,t_1) \le s^*$ for a.e. $x \in \Omega$,
then for all $t \ge t_1$,
$$s_* \le u(x,t) \le s^* \mbox{~~for a.e.~~}x\in\Omega;$$
\item if  $u(x, t_1) \le  s_*$ (resp.   $u(x, t_1) \ge  s^*$)
for a.e. $x\in\Omega$, then for all $t \ge t_1$,
$$u(x,t) \le s_* \quad (\mbox{resp.~~}u(x,t) \ge s^*)
\mbox{~~for a.e.~~} x\in \Omega.$$
\end{enumerate}
\end{corollary}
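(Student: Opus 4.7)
The whole statement should reduce to three applications of Lemma \ref{lem:ab-bound}, one for each of the three invariant intervals $(-\infty,s_*]$, $[s_*,s^*]$ and $[s^*,\infty)$. The time shift is harmless: since Problem $(P')$ is autonomous and the initial value problem has a unique solution by Lemma \ref{lem:locally:existence:f:lipschitz}, the function $v(\tau) := u(\tau + t_1)$ is the unique solution of $(P')$ with initial datum $u(\cdot, t_1)$, so it suffices to prove each assertion after the relabelling $t_1 = 0$.

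For (i) the plan is to take $a = s_*$ and $b = s^*$. Using $\bf (F_1)$ together with the identities $f(s_*)=f(M)$ and $f(s^*)=f(m)$ from $\bf (F_2)$, a short case analysis on the three monotone subintervals $[s_*,m]$, $[m,M]$, $[M,s^*]$ gives $f(s_*)\ge f(s)\ge f(s^*)$ for every $s\in[s_*,s^*]$. Hypothesis \eqref{ab-f} of Lemma \ref{lem:ab-bound} is then satisfied, and the lemma yields the required invariance of $[s_*,s^*]$.

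For (ii), in the subcase $u(\cdot, t_1)\le s_*$ a.e., one picks any $a\le \essinf_{x\in\Omega} u(x,t_1)$ (finite since $u(\cdot,t_1)\in L^\infty(\Omega)$) and sets $b=s_*$. Since $s_*<m$, the interval $[a,s_*]$ is contained in $(-\infty,m)$, where $f$ is strictly decreasing by $\bf (F_1)$; hence $f(a)\ge f(s)\ge f(s_*)$ on $[a,s_*]$, and Lemma \ref{lem:ab-bound} gives $u(x,t)\le s_*$ a.e.\ for all $t\ge t_1$. The subcase $u(\cdot,t_1)\ge s^*$ is handled symmetrically, with $a=s^*$ and any $b\ge \esssup_{x\in\Omega} u(x,t_1)$, using that $f$ is strictly decreasing on $(M,\infty)\supset[s^*,b]$.

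No real obstacle is expected: the content of the corollary reduces to the observation that the one-sided bistable inequality \eqref{ab-f} holds automatically on each of the three intervals by virtue of $\bf (F_1)$, $\bf (F_2)$, while all the dynamical content is already packaged in Lemma \ref{lem:ab-bound}.
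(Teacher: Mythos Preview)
Your proposal is correct and follows essentially the same route as the paper. The paper's proof simply invokes Theorem~\ref{principal:theorem} with the appropriate choices of $s_1,s_2$ (namely $s_1=s_*,\,s_2=s^*$ for (i), and $s_2=s_*$ resp.\ $s_1=s^*$ for (ii)); since Theorem~\ref{principal:theorem} is itself just Lemma~\ref{lem:ab-bound} applied with $a=s_1,\,b=s_2$, your direct appeal to Lemma~\ref{lem:ab-bound} is the same argument with one layer of indirection removed, and your explicit handling of the time shift makes precise what the paper leaves implicit.
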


\begin{proof} For (i), we simply set $s_1=s_*, s_2=s^*$ and apply
Theorem \ref{principal:theorem}. For (ii) we set $s_2=s_*$
(resp. $s_1=s^*$).
\end{proof}


\section{Proof of Theorem \ref{principal:theorem:to:kai:po}\,(i)}
\label{chap22:sec4}

This section is organized as follows: We first introduce the rearrangement theory in Subsection \ref{chap22:subsec4.1}. Then in Subsection \ref{chap22:subsec4.2}, we introduce Problem $(P^\sharp)$ and study basic properties of its solutions which we state in Theorem \ref{thm:chobaitaonP:sharp}. Finally, in Section \ref{moiquanhe:sectionP-P:sharp},  we prove the equivalence between the convergence of the solution $u(t)$ and its rearrangement $u^\sharp(t)$. This implies that the solution orbit $\{u(t): t \ge 0\}$ of $(P)$
is relatively compact in $L^1(\Omega)$, from which
Theorem \ref{principal:theorem:to:kai:po}\,(i) follows immediately.


\subsection{Rearrangement theory}
\label{chap22:subsec4.1}

Given a measurable function $w: \Omega \to \R$, the distribution
function of $w$ is defined by
$$\mu_w(\tau):=|\{x \in \Omega: w(x)>\tau\}|, \quad \tau \in \R.$$
Set
$$\Omega^\sharp:=(0, |\Omega|) \subset \R.$$
The \textit{unidimensional decreasing rearrangement} of $w$,
denoted $w^\sharp$, is defined on
$\overline{\Omega^\sharp}=[0, |\Omega|]$ by
\begin{equation}\label{definition:w:sharp}
\begin{cases}
&w^\sharp(0):=\esssup (w), \vspace{6pt}\\
&w^\sharp(y)=\inf\{ \tau:   \mu_w(\tau )<y\}, \quad y>0.
\end{cases}
\end{equation}
We recall some properties of $w^\sharp$  which are stated in
\cite[Chapter $1$]{chap2kesavan}.

\begin{proposition}\label{chap2:mancyuon:ws:ss}
The following properties hold:
\begin{enumerate}[label=\emph{(\roman*)}]
\item $w^\sharp: \Omega^\sharp \to \R$ is nonincreasing and left-continuous.
\item $w, w^\sharp$ are equi-measurable, i.e. $\mu_w=\mu_{w^\sharp}$.
\item If  $G$ is a Borel measurable function such that either $G\ge 0$ or $G(w) \in L^1(\Omega)$ then
$$\int_{\Omega^\sharp} G(w^\sharp(y))\,dy=\int_{\Omega} G(w(x))\,dx.$$
\item Let $w_1, w_2 \in L^p(\Omega)$ with $1\le p \le \infty$; then
$$\|w^\sharp_1-w^\sharp_2\|_{L^p(\Omega^\sharp)} \le \|w_1-w_2\|_{L^p(\Omega)}.$$
\item Let $\Phi:\R \to \R$ be nondecreasing. Then
$$(\Phi (w))^\sharp(y)=\Phi(w^\sharp(y)) \quad\mbox{~~ for a.e.~~} y\in\Omega^\sharp.$$
\end{enumerate}
\end{proposition}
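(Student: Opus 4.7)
The plan is to verify all five items directly from the definition \eqref{definition:w:sharp}, using two elementary facts about the distribution function: $\mu_w$ is nonincreasing, and it is right-continuous (by continuity of Lebesgue measure applied to the increasing sets $\{w>\tau-1/n\}$). For (i), monotonicity of $w^\sharp$ is immediate since the set $\{\tau:\mu_w(\tau)<y\}$ grows with $y$, so its infimum is nonincreasing; left-continuity at $y$ follows by choosing $y_n\uparrow y$ and noting that any $\tau$ with $\mu_w(\tau)<y$ satisfies $\mu_w(\tau)<y_n$ eventually, hence $w^\sharp(y_n)\le\tau$ eventually. For (ii), right-continuity of $\mu_w$ yields the equivalence
\[
w^\sharp(y)>\tau \;\Longleftrightarrow\; \mu_w(\tau)\ge y,
\]
so $\{y\in\Omega^\sharp:w^\sharp(y)>\tau\}$ coincides up to a null set with $(0,\mu_w(\tau))$; taking one-dimensional Lebesgue measure gives $\mu_{w^\sharp}=\mu_w$.

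Property (iii) is then the Cavalieri/layer-cake formula $\int_\Omega G(w)\,dx=\int_0^\infty|\{G\circ w>t\}|\,dt$ (for $G\ge 0$) applied to both $w$ and $w^\sharp$: the equi-measurability from (ii) transfers to compositions with any Borel $G$ (indicators of half-lines suffice, by a $\pi$-$\lambda$ argument), so the right-hand sides agree, and the signed case follows by splitting $G=G^+-G^-$. For (v) I would verify that $\Phi(w^\sharp)$ is nonincreasing (composition of nondecreasing and nonincreasing), equi-measurable with $\Phi(w)$ by (iii), and left-continuous; since these three properties characterize the decreasing rearrangement uniquely a.e., the identity $(\Phi(w))^\sharp=\Phi(w^\sharp)$ must hold a.e.

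The main obstacle is the contraction property (iv). I would first treat $p=1$ via the layer-cake identity
\[
\|w_1-w_2\|_{L^1(\Omega)}=\int_{-\infty}^\infty \bigl|\{w_1>\tau\}\triangle\{w_2>\tau\}\bigr|\,d\tau,
\]
combined with the key geometric observation that $\{w_i^\sharp>\tau\}=(0,\mu_{w_i}(\tau))$ are nested intervals, so
\[
\bigl|\{w_1^\sharp>\tau\}\triangle\{w_2^\sharp>\tau\}\bigr|=|\mu_{w_1}(\tau)-\mu_{w_2}(\tau)|\le \bigl|\{w_1>\tau\}\triangle\{w_2>\tau\}\bigr|.
\]
The case $p=\infty$ is elementary from (ii): the pointwise bound $w_1\le w_2+C$ propagates to $\mu_{w_1}(\tau)\le \mu_{w_2}(\tau-C)$ and hence, via (ii), to $w_1^\sharp\le w_2^\sharp+C$ pointwise (and symmetrically). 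For the intermediate range $1<p<\infty$ I would appeal to Chong's convex inequality $\int \varphi(|w_1^\sharp-w_2^\sharp|)\le \int \varphi(|w_1-w_2|)$ valid for every nondecreasing convex $\varphi\ge 0$ with $\varphi(0)=0$; this is established by approximating $\varphi$ from below by nonnegative combinations of the truncations $(\,\cdot\,-t)^+$ and applying the $L^1$-style estimate on each level set. Specializing to $\varphi(s)=s^p$ then yields (iv).
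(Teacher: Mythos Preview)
The paper does not prove this proposition at all: it simply recalls the five properties and cites \cite[Chapter~1]{chap2kesavan}. So there is no ``paper's own proof'' to compare against; your proposal goes well beyond what the paper does. Your arguments for (i)--(iii) are correct as written. Two small points deserve attention.

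In (v), the claim that $\Phi(w^\sharp)$ is left-continuous is not generally true when $\Phi$ has jumps. Fortunately you do not need it: any two nonincreasing functions on $\Omega^\sharp$ with the same distribution function agree a.e.\ (each equals its own rearrangement off a countable set), so ``nonincreasing $+$ equi-measurable with $\Phi(w)$'' already pins down $(\Phi(w))^\sharp$ a.e. Drop the left-continuity remark.

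In (iv), your treatment of $p=1$ and $p=\infty$ is clean and correct. The passage to $1<p<\infty$ via $\varphi(s)=s^p=\int_0^\infty p(p-1)t^{p-2}(s-t)^+\,dt$ reduces matters, as you say, to proving
\[
\int_{\Omega^\sharp}\bigl(|w_1^\sharp-w_2^\sharp|-t\bigr)^+\,dy \;\le\; \int_{\Omega}\bigl(|w_1-w_2|-t\bigr)^+\,dx
\qquad (t\ge 0).
\]
However, this does \emph{not} follow by rerunning the symmetric-difference layer-cake argument: the sets $\{|w_1^\sharp-w_2^\sharp|>\sigma\}$ are in general not intervals anchored at $0$, so your ``key geometric observation'' is unavailable here. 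The standard fix combines your $L^1$ and $L^\infty$ cases. Set $w_1^{(t)}:=\max\bigl(\min(w_1,\,w_2+t),\,w_2-t\bigr)$, so that $|w_1-w_1^{(t)}|=(|w_1-w_2|-t)^+$ and $|w_1^{(t)}-w_2|\le t$. By your $L^\infty$ case, $|(w_1^{(t)})^\sharp-w_2^\sharp|\le t$ pointwise, hence
\[
\bigl(|w_1^\sharp-w_2^\sharp|-t\bigr)^+ \;\le\; |w_1^\sharp-(w_1^{(t)})^\sharp|,
\]
and then your $L^1$ case applied to the pair $w_1,\,w_1^{(t)}$ gives the desired inequality. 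With this clipping step inserted, your proof of (iv) is complete.
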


\begin{remark}\label{chuy:tinhchat:w:shap} The function $w^\sharp$ is uniquely defined by the distribution function $\mu_w$. Moreover,
 by Proposition \ref{chap2:mancyuon:ws:ss}\,(ii), if
$$s_1 \le w(x) \le s_2 \mbox{~~for a.e.~~}x\in\Omega,$$
  then
$$s_1 \le  w^\sharp(y) \le s_2 \mbox{~~for all~~}y\in\Omega^\sharp.$$
\end{remark}


\subsection{Associated one-dimensional problem $(P^\sharp)$}
\label{chap22:subsec4.2}

\begin{theorem}\label{thm:chobaitaonP:sharp}
Let $u(t)$ be the solution of Problem $(P')$ with
$u_0 \in L^\infty(\Omega)$ and let $s_1,s_2$ be as in
$\bf (H)$. We define
\begin{equation}\label{defition:u:sharp}
u^\sharp(y,t):=(u(t))^\sharp(y) \mbox{~~on~~} \Omega^\sharp \times [0,\infty).
\end{equation}
Then
 $u^\sharp$ is the unique solution in $C^1([0,\infty); L^\infty(\Omega^\sharp))$ of Problem $(P^\sharp)$:
\begin{equation*}
 (P^\sharp) \ \ \left\{
\begin{array}{ll}
\dfrac{dv}{dt}=f(v)-\langle f(v) \rangle,  \quad  t >0, \vspace{6pt}\\
v(0)=u_0^\sharp. 
\end{array}
\right.
\end{equation*}
Moreover, for all $t \ge 0$,
$$s_1 \le u^\sharp(y,t) \le s_2 \mbox{~~for all~~} y\in\Omega^\sharp,$$
and
\begin{equation}\label{chap3:6:09:bdt:cho:nghiem:u:sao:ha:ha}
u^\sharp(y,t)=Y(t;u_0^\sharp(y)) \mbox{~~for a.e.~~}y\in  \Omega^\sharp.
\end{equation}
\end{theorem}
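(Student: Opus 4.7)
The plan is to exploit the pointwise representation $u(x,t) = Y(t; u_0(x))$ from \eqref{problem:ODE:22:6:3:14}, combined with the monotonicity of $s \mapsto Y(t;s)$ (Lemma \ref{cor:monotonicity:of:Y}), to derive an explicit pointwise formula for $u^\sharp$ and then check directly that it solves $(P^\sharp)$. I would first extend $Y(t;\cdot)$ from $[s_1,s_2]$ to a nondecreasing function $\Phi_t$ on all of $\R$ (constant outside $[s_1,s_2]$). Since $u_0 \in [s_1,s_2]$ a.e., property (v) of Proposition \ref{chap2:mancyuon:ws:ss} applied to $\Phi_t$ gives
\[
u^\sharp(y,t) = (Y(t; u_0(\cdot)))^\sharp(y) = Y(t; u_0^\sharp(y)) \quad\text{for a.e. } y \in \Omega^\sharp,
\]
which is exactly \eqref{chap3:6:09:bdt:cho:nghiem:u:sao:ha:ha}. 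Combined with Remark \ref{chuy:tinhchat:w:shap} and Corollary \ref{he qua:lalllaa:dfes}, this also yields the bounds $s_1 \le u^\sharp(y,t) \le s_2$.

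Next I would verify that $u^\sharp$ satisfies the equation in $(P^\sharp)$. By equi-measurability, namely property (iii) of Proposition \ref{chap2:mancyuon:ws:ss} with $G=f$,
\[
\frac{1}{|\Omega^\sharp|}\int_{\Omega^\sharp} f(u^\sharp(y,t))\,dy = \frac{1}{|\Omega|}\int_\Omega f(u(x,t))\,dx = \lambda(t),
\]
so the nonlocal averages for $(P)$ and $(P^\sharp)$ coincide. Differentiating the pointwise formula in $t$ via $\dot Y = f(Y) - \lambda(t)$ then yields $\partial_t u^\sharp(y,t) = f(u^\sharp(y,t)) - \langle f(u^\sharp(t))\rangle_{\Omega^\sharp}$ a.e., with $u^\sharp(y,0) = u_0^\sharp(y)$. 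For the $C^1([0,\infty); L^\infty(\Omega^\sharp))$ regularity and the uniqueness statement, I would reapply Theorem \ref{principal:theorem} to $(P^\sharp)$ itself: since $u_0^\sharp \in L^\infty(\Omega^\sharp)$ with $s_1 \le u_0^\sharp \le s_2$, hypothesis $\bf (H)$ holds on $\Omega^\sharp$, and Theorem \ref{principal:theorem} produces a unique global $C^1$-in-$L^\infty$ solution $v$ of $(P^\sharp)$. Rerunning the previous argument with $v$ in place of $u$ shows that $v$ is itself of the form $Y(t;u_0^\sharp(y))$ with the same $\lambda$, so $v = u^\sharp$.

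The main obstacle will be the regularity point: the rearrangement $u^\sharp(t)$ defined by \eqref{defition:u:sharp} is a priori only continuous in $t$ into $L^p(\Omega^\sharp)$ via Proposition \ref{chap2:mancyuon:ws:ss}(iv), so it is not immediate from the definition that $t \mapsto u^\sharp(t)$ is $C^1$ into $L^\infty(\Omega^\sharp)$. The way around this is precisely to obtain the $C^1$-in-$L^\infty$ solution $v$ of $(P^\sharp)$ from Theorem \ref{principal:theorem} first and identify it with $u^\sharp$ a posteriori through the $Y$-representation. A minor subtlety is that property (v) is formulated for $\Phi:\R\to\R$ nondecreasing, while $Y(t;\cdot)$ is only naturally defined where its ODE makes sense; the constant extension outside $[s_1,s_2]$ mentioned above resolves this without affecting the formula, since $u_0$ only takes values in $[s_1,s_2]$.
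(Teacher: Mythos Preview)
Your approach coincides with the paper's: the pointwise representation $u(x,t)=Y(t;u_0(x))$, the monotonicity of $Y(t;\cdot)$, Proposition~\ref{chap2:mancyuon:ws:ss}\,(v) to obtain $u^\sharp(y,t)=Y(t;u_0^\sharp(y))$, and Proposition~\ref{chap2:mancyuon:ws:ss}\,(iii) to match the two averages are exactly the ingredients the paper uses, in the same order.

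The only divergence is the $C^1$-regularity step. The paper handles it directly: once one has $\partial_t Y(t;u_0^\sharp)=f(u^\sharp(t))-\langle f(u^\sharp(t))\rangle$, it simply observes that the right-hand side is continuous in $t$ as an $L^\infty(\Omega^\sharp)$-valued map (since $Y(t;s)$ is uniformly controlled for $s\in[s_1,s_2]$ and $\lambda$ is continuous), so $t\mapsto Y(t;u_0^\sharp)$ is $C^1$ into $L^\infty$. Your detour---produce an abstract solution $v$ of $(P^\sharp)$ via Theorem~\ref{principal:theorem} and then identify $v=u^\sharp$---is sound in spirit, but the line ``$v$ is of the form $Y(t;u_0^\sharp(y))$ with the \emph{same} $\lambda$'' is not justified as written: $v$ comes with its own coefficient $\tilde\lambda(t)=\langle f(v(t))\rangle_{\Omega^\sharp}$, and you have not said why $\tilde\lambda=\lambda$. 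This can be closed (e.g.\ pull $v$ back to $\Omega$ by setting $\tilde u(x,t):=\tilde Y(t;u_0(x))$, use Proposition~\ref{chap2:mancyuon:ws:ss}\,(iii) to see $\tilde u$ solves $(P')$, and invoke uniqueness there), but by then you have essentially redone the direct regularity argument. The paper's one-line route is preferable.
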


\begin{proof}
In view of \eqref{problem:ODE:22:6:3:14}, we have
for all $t \ge 0$,
$$u(x,t)=Y(t;u_0(x))  \mbox{~~for a.e.~~} x \in \Omega.$$
Hence, since $Y(t;s)$ is increasing in $s$ (cf. Lemma \ref{cor:monotonicity:of:Y}),  it follows from Proposition \ref{chap2:mancyuon:ws:ss}\,(v) that for all $t\ge 0$,
\begin{equation}\label{chap3:dssfsfs}
u^\sharp(t)=Y(t;u^\sharp_0) \mbox{~~a.e. in~~} \Omega^\sharp.
\end{equation}
Moreover, \eqref{ine:for:solution:in-thm:s1:s2} and Remark \ref{chuy:tinhchat:w:shap} imply that
\begin{equation}\label{chap3:dssfsfs:so2}
s_1 \le u^\sharp(y,t) \le s_2 \mbox{~~for all~~} y\in\Omega^\sharp, t \ge 0.
\end{equation}
It remains to prove that $u^\sharp$ is the unique solution of Problem $(P^\sharp)$. Since $Y(t;s)$ is the solution of Problem $(ODE)$,
 it follows that for all $t\ge 0$,
\begin{align*}
\frac{\partial Y}{\partial t}(t; u^\sharp_0)&=f(Y(t;u^\sharp_0))-\lambda(t)\\
&=f(u^\sharp(t))-\langle f(u(t)) \rangle  \\
&=f(u^\sharp(t))-\langle f(u^\sharp(t)) \rangle \quad \mbox{~~a.e. in~~} \Omega^\sharp.
\end{align*}
Here we have used \eqref{chap3:dssfsfs} and Proposition \ref{chap2:mancyuon:ws:ss}\,(iii).
Since the right-hand-side of the above equation is continuous in $t$ as an $L^\infty(\Omega)$-valued function,
 we have $u^\sharp=Y(\cdot,u^\sharp_0(\cdot)) \in C^1([0,\infty); L^\infty(\Omega^\sharp))$.
Hence $u^\sharp$ is a solution of Problem $(P^\sharp)$  in $C^1([0, \infty); L^\infty(\Omega^\sharp))$.
The uniqueness of the solution of $(P^\sharp)$ follows from the locally Lipschitz continuity of the map $v \mapsto f(v)-\langle f(v) \rangle$ on
$L^\infty(\Omega)$.
\end{proof}

\begin{remark}
The above theorem shows that the monotone rearrangement $u^\sharp(t)$ satisfies precisely the same equation as $(P')$.
The advantage of considering $(P^\sharp)$ is that
its solutions are easily seen to be relatively compact in $L^1$, as we see below, from which we can conclude the relative compactness of solutions of
$(P')$.
\end{remark}

\begin{lemma} \label{bounded:sds:u^sharp}
Let $u_0 \in L^\infty(\Omega)$. Then
the solution orbit $\{ u^\sharp(t): t \ge 0 \}$  is relatively compact in $L^1(\Omega^\sharp)$.
\end{lemma}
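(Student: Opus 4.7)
The plan is to exploit the fact that, for each fixed $t \ge 0$, the function $y \mapsto u^\sharp(y,t)$ is monotone and uniformly bounded, so that the orbit sits in a bounded subset of $BV(\Omega^\sharp)$, and then invoke the compact embedding $BV(\Omega^\sharp) \hookrightarrow L^1(\Omega^\sharp)$.

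More concretely, first I would recall from Proposition \ref{chap2:mancyuon:ws:ss}\,(i) that $u^\sharp(\cdot,t)$ is nonincreasing on the bounded interval $\Omega^\sharp=(0,|\Omega|)$. Next, Theorem \ref{thm:chobaitaonP:sharp} gives the uniform pointwise bound
\[
s_1 \le u^\sharp(y,t) \le s_2 \quad \text{for all } y \in \Omega^\sharp, \ t \ge 0.
\]
For a monotone function on an interval, the total variation coincides with the difference of the endpoint limits, hence for every $t \ge 0$,
\[
\|u^\sharp(t)\|_{BV(\Omega^\sharp)} \le (s_2-s_1)\bigl(1+|\Omega|\bigr),
\]
where I have added the $L^1$ part of the norm, which is controlled by $\max(|s_1|,|s_2|)\,|\Omega|$. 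Thus the orbit $\{u^\sharp(t): t\ge 0\}$ is a bounded subset of $BV(\Omega^\sharp)$.

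Finally, since $\Omega^\sharp$ is a bounded interval in $\R$, the embedding $BV(\Omega^\sharp) \hookrightarrow L^1(\Omega^\sharp)$ is compact (this is the classical Helly selection / Rellich-type result in one dimension). Consequently, $\{u^\sharp(t):t\ge 0\}$ is relatively compact in $L^1(\Omega^\sharp)$, which is the claim. There is no real obstacle here; the only ingredient that requires a comment is the uniform $BV$ bound, and this is automatic from monotonicity together with the $L^\infty$ bound from Theorem \ref{thm:chobaitaonP:sharp}. The genuine work done earlier, namely passing from $u$ to its rearrangement $u^\sharp$, is what makes this argument possible: $u$ itself need not be monotone in any direction, so it is precisely the rearrangement that converts the $L^\infty$ bound into a $BV$ bound.
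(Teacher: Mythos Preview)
Your proof is correct and follows essentially the same route as the paper's: monotonicity of $u^\sharp(\cdot,t)$ together with the uniform $L^\infty$ bound from Theorem \ref{thm:chobaitaonP:sharp} yields a uniform $BV$ bound, and then the compact embedding $BV(\Omega^\sharp)\hookrightarrow L^1(\Omega^\sharp)$ finishes the argument. The only cosmetic point is that your displayed constant $(s_2-s_1)(1+|\Omega|)$ does not literally match the decomposition you describe in words; the paper simply writes ``there exists $c>0$'' without making the constant explicit.
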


Before proving Lemma \ref{bounded:sds:u^sharp},
let us  recall the definition of the BV-norm of functions of one variable (cf. \cite{chap2ambrosio}).

\begin{definition}\label{dinh,ghia:27:10:}
 Let $-\infty \le a<b \le \infty$ and let $w \in L^1(a,b)$. The BV-norm of $w$ is defined by
$$\|w\|_{BV(a, b)}:=\|w\|_{L^1(a, b)}+\ess V_a^b w,$$
where
$$\ess V_a^b w:=\inf \left\{  pV_a^b \widetilde w: \widetilde w = w\mbox{~~a.e. in~~}\Omega \right \}.$$
Here
$$pV_a^b \widetilde w:=\sup \left\{  \sum_{j=1}^{n}  |\widetilde w(t_{j+1})-\widetilde w(t_j) | \right\},$$
where the supremum is taken over all finite partitions ${a < t_1 < \cdots < t_{n+ 1} < b}$.
\end{definition}

\begin{proof}[\bf Proof of Lemma \ref{bounded:sds:u^sharp}]
Let $s_1, s_2$ be as in $\bf (H)$.
Since $u^\sharp(y,t)$ is nonincreasing in $y$ and
$$s_1 \le u^\sharp(y,t) \le s_2  \mbox{~~for all~~} y \in \Omega^\sharp=(0, |\Omega|), t \ge 0,$$
there exists a constant $c>0$ such that
$$\|u^\sharp(t)\|_{BV(\Omega^\sharp)} \le c  \mbox{~~for all~~} t \ge 0.$$
Because of the compactness of the embedding
$BV(\Omega^\sharp) \hookrightarrow L^1(\Omega^\sharp)$,
the set $\{u^\sharp(t): t \ge 0\}$ is relatively compact in
$L^1(\Omega^\sharp)$.
This completes the proof of Lemma \ref{bounded:sds:u^sharp}.
\end{proof}


 \subsection{Proof of Theorem \ref{principal:theorem:to:kai:po}\,(i)}
\label{moiquanhe:sectionP-P:sharp}

\begin{lemma} \label{chuyenthanhbode:quenheeeee:lemmme:u-shapr:u}
 Let $u$ be the solution of $(P')$ with $u_0 \in L^\infty(\Omega)$ and let $u^\sharp$ be as in
 \eqref{defition:u:sharp}. Then
 \begin{align}\label{danthuc123:u-sparp-u:13:3:14}
\|u^\sharp(t)-u^\sharp(\tau)\|_{L^1(\Omega^\sharp)}=\|u(t)-u(\tau)\|_{L^1(\Omega)},
\end{align}
for any $t , \tau \in [0, \infty)$.
\end{lemma}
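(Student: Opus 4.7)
The plan is to reduce both sides of the identity to integrals of a single function of one real variable evaluated at $u_0$ and at $u_0^\sharp$, and then invoke the equi-measurability property in Proposition \ref{chap2:mancyuon:ws:ss}\,(iii).

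First I would use the representation formulas already established. By \eqref{problem:ODE:22:6:3:14}, for fixed $t,\tau \ge 0$,
\begin{equation*}
u(x,t)-u(x,\tau)=Y(t;u_0(x))-Y(\tau;u_0(x)) \quad \text{for a.e.\ } x\in\Omega,
\end{equation*}
and by \eqref{chap3:6:09:bdt:cho:nghiem:u:sao:ha:ha},
\begin{equation*}
u^\sharp(y,t)-u^\sharp(y,\tau)=Y(t;u_0^\sharp(y))-Y(\tau;u_0^\sharp(y)) \quad \text{for a.e.\ } y\in\Omega^\sharp.
\end{equation*}
Define $\phi:\mathbb{R}\to[0,\infty)$ by $\phi(s):=|Y(t;s)-Y(\tau;s)|$. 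Standard continuous dependence on initial data for the ODE (the right-hand side of $(ODE)$ is locally Lipschitz in $Y$) shows that $s\mapsto Y(t;s)$ and $s\mapsto Y(\tau;s)$ are continuous on $[s_1,s_2]$, hence $\phi$ is continuous and therefore Borel measurable.

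Next, I would rewrite the two norms as
\begin{equation*}
\|u(t)-u(\tau)\|_{L^1(\Omega)}=\int_\Omega \phi(u_0(x))\,dx, \qquad \|u^\sharp(t)-u^\sharp(\tau)\|_{L^1(\Omega^\sharp)}=\int_{\Omega^\sharp}\phi(u_0^\sharp(y))\,dy.
\end{equation*}
Since $\phi\ge 0$ is Borel measurable, Proposition \ref{chap2:mancyuon:ws:ss}\,(iii), applied with $G=\phi$ and $w=u_0$, yields
\begin{equation*}
\int_{\Omega^\sharp}\phi(u_0^\sharp(y))\,dy=\int_\Omega \phi(u_0(x))\,dx,
\end{equation*}
which is precisely \eqref{danthuc123:u-sparp-u:13:3:14}.

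The only point requiring a little care is the Borel measurability of $\phi$, but this follows for free from the local Lipschitz theory for the scalar ODE $(ODE)$ with the continuous-in-$t$ forcing $\lambda(t)$; no rearrangement-specific estimates are needed. In particular, the proof does not rely on any stability estimate such as Proposition \ref{chap2:mancyuon:ws:ss}\,(iv), and so yields an equality rather than merely an inequality.
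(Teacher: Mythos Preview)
Your proof is correct and follows essentially the same route as the paper: define $G(s)=|Y(t;s)-Y(\tau;s)|$, use the representations \eqref{problem:ODE:22:6:3:14} and \eqref{chap3:6:09:bdt:cho:nghiem:u:sao:ha:ha} to express both $L^1$-norms as integrals of $G$ against $u_0$ and $u_0^\sharp$, and apply Proposition~\ref{chap2:mancyuon:ws:ss}\,(iii). The paper's proof is just a terser version of yours; your extra remark on the Borel measurability of $G$ is a fair clarification but not a different idea.
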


\begin{remark}
The inequality $\|u^\sharp(t)-u^\sharp(\tau)\|_{L^1(\Omega^\sharp)} \le \|u(t)-u(\tau)\|_{L^1(\Omega)}$ is clear from the general property (iv) of Proposition \ref{chap2:mancyuon:ws:ss}. The important point of the lemma is that equality holds in \eqref{danthuc123:u-sparp-u:13:3:14}.
 \end{remark}

 \begin{proof}[\bf Proof of Lemma \ref{chuyenthanhbode:quenheeeee:lemmme:u-shapr:u}]
Applying Proposition \ref{chap2:mancyuon:ws:ss}\,(iii) for $G(s):=|Y(t;s)-Y(\tau;s)|$ with $t, \tau \ge 0$, we obtain
\begin{align*}
\int_{\Omega^\sharp} |Y(t;u_0^\sharp) -Y(\tau;u_0^\sharp)| =\int_{\Omega} |Y(t;u_0) -Y(\tau;u_0)|.
\end{align*}
This, together with  \eqref{problem:ODE:22:6:3:14} and \eqref{chap3:6:09:bdt:cho:nghiem:u:sao:ha:ha}, yields
$$\|u^\sharp(t)-u^\sharp(\tau)\|_{L^1(\Omega^\sharp)}=\|u(t)-u(\tau)\|_{L^1(\Omega)}.$$
\end{proof}

\begin{corollary} \label{quenheeeee:lemmme:u-shapr:u}
Let $\{t_n\}$ be a sequence of positive numbers such that $t_n \to \infty$ as $n \to \infty$. Then
 the following statements are equivalent
\begin{enumerate}[label=\emph{(\alph*)}]
\item $u^\sharp(t_n) \to \psi$ in $L^1(\Omega^\sharp)$ as $n \to \infty$
for some $\psi\in L^1(\Omega^\sharp)$;
\item $u(t_n) \to \varphi$ in $L^1(\Omega)$ as $n \to \infty$
for some $\varphi\in L^1(\Omega)$ with $\varphi^\sharp=\psi$.
\end{enumerate}
\end{corollary}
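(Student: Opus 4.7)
The plan is to exploit the exact isometry given by Lemma \ref{chuyenthanhbode:quenheeeee:lemmme:u-shapr:u}, which says $\|u^\sharp(t)-u^\sharp(\tau)\|_{L^1(\Omega^\sharp)}=\|u(t)-u(\tau)\|_{L^1(\Omega)}$ for all $t,\tau\ge 0$. This immediately means that $\{u(t_n)\}$ is Cauchy in $L^1(\Omega)$ if and only if $\{u^\sharp(t_n)\}$ is Cauchy in $L^1(\Omega^\sharp)$. Combined with Proposition \ref{chap2:mancyuon:ws:ss}\,(iv), which gives the one-sided inequality $\|v_1^\sharp-v_2^\sharp\|_{L^1(\Omega^\sharp)}\le\|v_1-v_2\|_{L^1(\Omega)}$ and so lets us take rearrangements inside limits, both implications follow.

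For the direction (b)$\Rightarrow$(a), assume $u(t_n)\to\varphi$ in $L^1(\Omega)$. By Proposition \ref{chap2:mancyuon:ws:ss}\,(iv),
\[
\|u^\sharp(t_n)-\varphi^\sharp\|_{L^1(\Omega^\sharp)}\le\|u(t_n)-\varphi\|_{L^1(\Omega)}\longrightarrow 0,
\]
so $u^\sharp(t_n)\to\varphi^\sharp$ in $L^1(\Omega^\sharp)$; we set $\psi:=\varphi^\sharp$.

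For the direction (a)$\Rightarrow$(b), assume $u^\sharp(t_n)\to\psi$ in $L^1(\Omega^\sharp)$. Then $\{u^\sharp(t_n)\}$ is Cauchy in $L^1(\Omega^\sharp)$, and the equality in Lemma \ref{chuyenthanhbode:quenheeeee:lemmme:u-shapr:u} transfers the Cauchy property to $\{u(t_n)\}$ in $L^1(\Omega)$. By completeness, $u(t_n)\to\varphi$ in $L^1(\Omega)$ for some $\varphi\in L^1(\Omega)$. Applying the already-proved implication (b)$\Rightarrow$(a) to this subsequence gives $u^\sharp(t_n)\to\varphi^\sharp$ in $L^1(\Omega^\sharp)$, and uniqueness of limits forces $\varphi^\sharp=\psi$.

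There is no real obstacle here; the whole content is the equality of norms in Lemma \ref{chuyenthanhbode:quenheeeee:lemmme:u-shapr:u}, which upgrades the standard contraction property of rearrangement (inequality (iv) of Proposition \ref{chap2:mancyuon:ws:ss}) into a genuine isometry along the orbit. The only mild subtlety is that the conclusion (b) asserts not merely existence of $\varphi$ but the additional identification $\varphi^\sharp=\psi$; this is handled at the very end by running the easier direction on the subsequence we have just constructed and invoking uniqueness of limits in $L^1(\Omega^\sharp)$.
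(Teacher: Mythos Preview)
Your proof is correct and follows essentially the same approach as the paper: the paper also reduces both directions to the Cauchy equivalence coming from the isometry \eqref{danthuc123:u-sparp-u:13:3:14} and then identifies $\varphi^\sharp=\psi$ via Proposition~\ref{chap2:mancyuon:ws:ss}\,(iv). One trivial wording slip: in the (a)$\Rightarrow$(b) direction you write ``this subsequence'' where you mean the full sequence $\{t_n\}$ itself.
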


\begin{proof}
By \eqref{danthuc123:u-sparp-u:13:3:14}, $u^\sharp(t_n)$ is a Cauchy
sequence in $L^1(\Omega^\sharp)$ if and only if $u(t_n)$ is a Cauchy
sequence in $L^1(\Omega)$.  Therefore the convergence of the
two sequences are equivalent. The fact that $\varphi^\sharp=\psi$
follows also from \eqref{danthuc123:u-sparp-u:13:3:14} (or from
Proposition \ref{chap2:mancyuon:ws:ss}\,(iv)), since $u(t_n) \to \varphi$ in
$L^1(\Omega)$ implies $u^\sharp(t_n) \to \varphi^\sharp$ in
$L^1(\Omega^\sharp)$.
\end{proof}

The following is an immediate consequence of Lemma
\ref{bounded:sds:u^sharp} and Corollary
\ref{quenheeeee:lemmme:u-shapr:u}.

\begin{proposition}\label{boundedness:solution:P:15}
Assume that $u_0 \in L^\infty(\Omega)$ and let $u(t)$ be the
solution of $(P')$.
Then $\{ u(t): t \ge 0 \}$  is relatively compact in $L^1(\Omega)$.
\end{proposition}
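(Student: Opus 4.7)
The plan is to combine Lemma \ref{bounded:sds:u^sharp} with the isometry identity of Lemma \ref{chuyenthanhbode:quenheeeee:lemmme:u-shapr:u}. Given any sequence $\{t_n\} \subset [0,\infty)$, I split into two cases depending on whether $\{t_n\}$ is bounded or not, and in each case extract a subsequence along which $u(t_n)$ converges in $L^1(\Omega)$.

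First, suppose $\{t_n\}$ contains a bounded subsequence, which I continue to denote by $\{t_n\}$. Since by Theorem \ref{principal:theorem} the solution satisfies $u \in C^1([0,\infty); L^\infty(\Omega))$, the map $t \mapsto u(t)$ is in particular continuous from the compact interval $[0, \sup_n t_n]$ into $L^\infty(\Omega)$, so $\{u(t_n)\}$ is bounded and equicontinuous in $L^\infty(\Omega)$; extracting a convergent subsequence in $L^\infty(\Omega)$ (via a Bolzano--Weierstrass-type argument along any accumulation point of $\{t_n\}$) and using $|\Omega| < \infty$ to pass from $L^\infty(\Omega)$ to $L^1(\Omega)$ convergence gives what we want. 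Actually, it is simpler: any bounded sequence of real numbers has a convergent subsequence $t_{n_k} \to t_*$, and then $u(t_{n_k}) \to u(t_*)$ in $L^\infty(\Omega)$, hence in $L^1(\Omega)$.

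Second, suppose $t_n \to \infty$ (after passing to a subsequence if necessary). Lemma \ref{bounded:sds:u^sharp} furnishes a further subsequence $\{t_{n_k}\}$ such that $u^\sharp(t_{n_k})$ converges in $L^1(\Omega^\sharp)$, and is therefore Cauchy. By the key identity
\[
\|u^\sharp(t_{n_k}) - u^\sharp(t_{n_j})\|_{L^1(\Omega^\sharp)} = \|u(t_{n_k}) - u(t_{n_j})\|_{L^1(\Omega)}
\]
from Lemma \ref{chuyenthanhbode:quenheeeee:lemmme:u-shapr:u}, the sequence $\{u(t_{n_k})\}$ is also Cauchy in $L^1(\Omega)$, hence converges there by completeness.

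Combining the two cases shows that every sequence in the orbit $\{u(t): t\ge 0\}$ admits an $L^1(\Omega)$-convergent subsequence, which is the desired relative compactness. No genuine obstacle is expected: the hard analytic work has already been done, namely the BV-bound on $u^\sharp$ (via monotonicity in $y$ and the $L^\infty$-bound) together with the equi-measurability identity that upgrades the \emph{contraction} in Proposition \ref{chap2:mancyuon:ws:ss}\,(iv) to an \emph{equality} on orbits of $(P')$; the present proposition is simply the package-up of those two inputs.
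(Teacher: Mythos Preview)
Your proof is correct and follows essentially the same route as the paper: relative compactness of $\{u^\sharp(t)\}$ from Lemma \ref{bounded:sds:u^sharp} combined with the isometry identity of Lemma \ref{chuyenthanhbode:quenheeeee:lemmme:u-shapr:u} to transfer the Cauchy property back to $\{u(t)\}$. Your case split for bounded $\{t_n\}$ is harmless but unnecessary, since Lemma \ref{bounded:sds:u^sharp} already covers the whole orbit $\{u^\sharp(t): t\ge 0\}$ and the isometry identity holds for all $t,\tau\ge 0$, so the Cauchy argument in your second case handles every sequence uniformly.
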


\begin{proof}[\bf Proof of Theorem \ref{principal:theorem:to:kai:po}\,(i)]
Since the solution orbit $\{ u(t): t \ge 0 \}$ is relatively compact in $L^1(\Omega)$,
the assertion of Theorem \ref{principal:theorem:to:kai:po}\,(i)
follows from the standard theory of dynamical systems.
\end{proof}


\section{Proof of Theorems \ref{principal:theorem:to:kai:po}\,(ii) and \ref{theorem:multi-stable:case}}
\label{section4ch}

\begin{lemma}[Lyapunov functional]\label{chap3:3chapchapLyapunov-functional}
Assume that $u_0 \in L^\infty(\Omega)$ and let $E$ be the functional defined in \eqref{3:11:2014:Hamlyapunove}. Then the following holds:
\begin{enumerate}[label=\emph{(\roman*)}]
\item For all $\tau_2 > \tau_1 \ge 0$,
\begin{align*}
E(u(\tau_2))-E(u(\tau_1))= -\int_{\tau_1}^{\tau_2}\int_{\Omega} |u_t|^2\,dxdt \le 0.
\end{align*}
\item $E(u(t))$ is nonincreasing on $[0, \infty)$ and has
a limit as $t \to \infty$.
\item $E$ is constant on $\omega(u_0)$.
\end{enumerate}
\end{lemma}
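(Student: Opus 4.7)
The plan is to compute $\tfrac{d}{dt}E(u(t))$ directly and to exploit the mass conservation property to cancel the nonlocal term.

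For part (i), I would first justify the chain rule for $E$. Since $u\in C^1([0,\infty);L^\infty(\Omega))$ and $f\in C^1(\mathbb{R})$, for a.e.\ $x\in\Omega$ the map $t\mapsto F(u(x,t))$ is $C^1$ with derivative $f(u(x,t))\,u_t(x,t)$. By Theorem~\ref{principal:theorem}, $u$ is uniformly bounded on $\Omega\times[0,T]$ for every $T>0$, so this derivative is bounded uniformly in $(x,t)$ on compact time intervals. Differentiating under the integral sign yields
\[
\frac{d}{dt}E(u(t)) = -\int_{\Omega} f(u)\,u_t\,dx.
\]
Now I would substitute $f(u)=u_t+\langle f(u)\rangle$, which is just a rewriting of the equation in $(P')$:
\[
-\int_{\Omega} f(u)\,u_t\,dx = -\int_{\Omega}|u_t|^2\,dx \;-\; \langle f(u)\rangle \int_{\Omega} u_t\,dx.
\]
By the mass conservation identity \eqref{mass:conservation:modify}, $\int_\Omega u(x,t)\,dx$ is constant in $t$, so $\int_\Omega u_t\,dx=0$ and the second term drops out. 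Integrating from $\tau_1$ to $\tau_2$ gives the identity of part (i).

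Part (ii) is then almost immediate: the identity in (i) shows that $E(u(t))$ is nonincreasing, while the uniform bound $s_1\le u(x,t)\le s_2$ together with the continuity of $F$ gives $E(u(t))\ge -|\Omega|\max_{[s_1,s_2]}F$. A bounded monotone function on $[0,\infty)$ has a limit at $+\infty$.

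For part (iii), I would pick any $\varphi\in\omega(u_0)$ and a sequence $t_n\to\infty$ with $u(t_n)\to\varphi$ in $L^1(\Omega)$. Passing to a subsequence, $u(t_n)\to\varphi$ a.e., and since $\|u(t_n)\|_{L^\infty}\le\max(|s_1|,|s_2|)$, dominated convergence gives $F(u(t_n))\to F(\varphi)$ in $L^1(\Omega)$, hence $E(u(t_n))\to E(\varphi)$. But by (ii), $E(u(t_n))$ converges to a value $E_\infty$ that does not depend on the chosen sequence, so $E(\varphi)=E_\infty$ for every $\varphi\in\omega(u_0)$.

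The only subtle step is the chain-rule justification at the start; once that is in place, the rest is just algebra, the mass-conservation cancellation, and continuity of the Nemytskii operator on bounded sets of $L^\infty(\Omega)$. I do not expect any substantial obstacle.
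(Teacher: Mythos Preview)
Your proof is correct and follows essentially the same route as the paper: compute $\frac{d}{dt}E(u(t))=-\int_\Omega f(u)u_t$, use mass conservation to kill the $\langle f(u)\rangle$ term and recognize the remainder as $-\int_\Omega |u_t|^2$, then invoke boundedness and continuity of $F$ for (ii) and (iii). The only differences are cosmetic---you substitute $f(u)=u_t+\langle f(u)\rangle$ where the paper subtracts $\langle f(u)\rangle\int_\Omega u_t=0$, and you spell out the dominated-convergence step in (iii) a bit more explicitly than the paper does.
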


\begin{proof}
(i) We have
\begin{align*}
\frac{d}{dt} E(u(t))&=-\frac{d}{dt} \int_{\Omega} F(u)\,dx
= -\int_{\Omega} f(u)u_t\,dx.
\end{align*}
Since
$$\int_{\Omega} u_t \,dx=0,$$
it follows that
\begin{align*}
\frac{d}{dt} E(u(t)) = -\int_{\Omega} (f(u)-\langle f(u) \rangle) u_t \,dx=-\int_{\Omega} |u_t|^2\,dx.
\end{align*}
Integrating this identity from $\tau_1$ to $\tau_2$, we obtain
$$E(u(\tau_2))-E(u(\tau_1))= -\int_{\tau_1}^{\tau_2}\int_{\Omega} |u_t|^2 \,dxdt\le 0.$$

(ii) By (i), $E(u(\cdot))$ is nonincreasing. Furthermore, it is clearly bounded. Hence it has a limit as $t \to \infty$.

(iii) Let $\varphi \in \omega(u_0)$ and let $t_n\to \infty$ be a sequence such that
\begin{equation*}\label{chap3:3chapchapsuhoitu:point:omega:limit:set}
 u(t_n) \to \varphi\mbox{~~in~~}L^1(\Omega) \mbox{~~as~~} n\to\infty.
\end{equation*}
Then, since $\{u(x,t_n)\}$ is uniformly bounded, the convergence $u(t_n) \to \varphi$ implies
$E(u(t_n)) \to E(\varphi)$ as $n \to \infty$. Consequently,
$$E(\varphi)=\lim_{n \to \infty} E(u(t_n))=\lim_{t \to \infty} E(u(t)).$$
Therefore $E$ is constant on $\omega(u_0)$.
\end{proof}

\begin{remark}\label{gradientlfow:chobaitaon}
If $f$ is globally Lipschitz continuous on $\R$,
Problem $(P')$ is well-posed in $L^2(\Omega)$. It is then easily seen that $(P')$ represents a gradient flow for $E(u)$ on the space
 $$X=\{w \in L^2(\Omega): \int_\Omega w(x) \,dx=\int_\Omega u_0(x) \,dx\}.$$
\end{remark}

\begin{corollary}\label{hequa:ham:lyapunov}
Let $\varphi \in \omega(u_0)$, $\{t_n\}$ be such that
\begin{equation}\label{dayso:hoitu:cho:omega:limit}
t_n \to \infty, \quad u(t_n) \to \varphi \mbox{~~in~~}L^1(\Omega) \mbox{~~as~~} n \to \infty.
\end{equation}¥
 Let $T>0$, $\eta>0$. Then there exists $n_0=n_0(T,\eta)$ such that
\begin{equation}\label{bdt:quatr:ldej^wmksdfsdf}
\|u(t)-\varphi\|_{L^1(\Omega)} \le \eta \mbox{~~for all~~} t \in [t_n, t_n+T], n \ge n_0.
\end{equation}
\end{corollary}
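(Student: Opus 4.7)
The plan is to combine the $L^2$–dissipation identity furnished by the Lyapunov functional $E$ with a Cauchy--Schwarz estimate, so that the increment $u(t_n+s)-u(t_n)$ can be controlled uniformly in $s\in[0,T]$. Then a triangle inequality with the hypothesis $u(t_n)\to\varphi$ in $L^1(\Omega)$ will finish the proof.

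Concretely, I would proceed as follows. First, by Lemma \ref{chap3:3chapchapLyapunov-functional}\,(i),
\begin{equation*}
\int_{t_n}^{t_n+T}\int_{\Omega}|u_t|^2\,dxdt \;=\; E(u(t_n))-E(u(t_n+T)).
\end{equation*}
Since by Lemma \ref{chap3:3chapchapLyapunov-functional}\,(ii) the function $E(u(t))$ converges to a finite limit as $t\to\infty$, the right-hand side tends to $0$ as $n\to\infty$; in particular, for fixed $T>0$,
\begin{equation*}
\delta_n:=\Bigl(\int_{t_n}^{t_n+T}\|u_t(\tau)\|_{L^2(\Omega)}^{2}\,d\tau\Bigr)^{1/2}\longrightarrow 0.
\end{equation*}

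Next, for any $s\in[0,T]$ I would write
\begin{equation*}
u(t_n+s)-u(t_n)=\int_{t_n}^{t_n+s}u_t(\tau)\,d\tau,
\end{equation*}
which holds in $L^\infty(\Omega)$ by Theorem \ref{principal:theorem}, hence \emph{a fortiori} in $L^1(\Omega)$. Taking $L^1$-norms and applying Cauchy--Schwarz twice (once in space, since $|\Omega|<\infty$, and once in time), I get
\begin{equation*}
\|u(t_n+s)-u(t_n)\|_{L^1(\Omega)}\;\le\;|\Omega|^{1/2}\int_{t_n}^{t_n+s}\|u_t(\tau)\|_{L^2(\Omega)}\,d\tau\;\le\;|\Omega|^{1/2}\,T^{1/2}\,\delta_n,
\end{equation*}
uniformly in $s\in[0,T]$. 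Combined with the triangle inequality
\begin{equation*}
\|u(t_n+s)-\varphi\|_{L^1(\Omega)}\le\|u(t_n+s)-u(t_n)\|_{L^1(\Omega)}+\|u(t_n)-\varphi\|_{L^1(\Omega)},
\end{equation*}
and with the hypothesis \eqref{dayso:hoitu:cho:omega:limit}, both summands can be made smaller than $\eta/2$ for $n$ sufficiently large, which yields \eqref{bdt:quatr:ldej^wmksdfsdf}.

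There is essentially no serious obstacle: the only mildly delicate point is the passage from the identity $\frac{d}{dt}E(u(t))=-\|u_t\|_{L^2}^2$ to the uniform-in-$s$ bound on the increment, which is handled by the two successive Cauchy--Schwarz estimates above. The argument is a standard LaSalle-type consequence of having a Lyapunov functional whose dissipation controls $\|u_t\|_{L^2}^2$, and it crucially does \emph{not} require $\varphi$ to be stationary (which is the statement Theorem \ref{principal:theorem:to:kai:po}\,(ii) we are in the process of proving).
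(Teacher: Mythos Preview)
Your proposal is correct and is essentially the same argument as the paper's proof: both split $\|u(t)-\varphi\|_{L^1}$ via the triangle inequality, bound $\|u(t)-u(t_n)\|_{L^1}$ by integrating $u_t$ and applying Cauchy--Schwarz in space and time, and use the Lyapunov dissipation identity from Lemma~\ref{chap3:3chapchapLyapunov-functional} to make $\int_{t_n}^{t_n+T}\int_\Omega|u_t|^2\to 0$. The only cosmetic difference is that the paper phrases the vanishing of the tail integral via $\int_0^\infty\int_\Omega|u_t|^2<\infty$, whereas you phrase it via $E(u(t_n))-E(u(t_n+T))\to 0$; these are equivalent.
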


\begin{proof} It follows from Lemma
\ref{chap3:3chapchapLyapunov-functional} that
$$\int_0^{\infty} \int_\Omega |u_t|^2\,dxdt=E(u_0)-
\lim_{t \to \infty} E(u(t)) <\infty.$$
Thus
\begin{equation}\label{dk:tinhchat:sdfse,obbbjf}
\int_{t_n}^{t_n+T} \int_\Omega |u_t|^2 \,dxdt \to 0 \mbox{~~as~~}n \to \infty.
\end{equation}
By the Schwarz inequality, we have, for all $t \in [t_n, t_n+T]$,
\begin{align*}
\|u(t)-\varphi\|_{L^1(\Omega)} & \le \|u(t)-u(t_n)\|_{L^1(\Omega)}+\|u(t_n)-\varphi\|_{L^1(\Omega)}\\
&\le \int_{t_n}^t \|u_t(\tau)\|_{L^1(\Omega)} \,d\tau+ \|u(t_n)-\varphi\|_{L^1(\Omega)}\\
& \le \int_{t_n}^{t_n+T} \|u_t(\tau)\|_{L^1(\Omega)} d\tau+ \|u(t_n)-\varphi\|_{L^1(\Omega)}\\
& \le T^{\frac{1}{2}} |\Omega|^{\frac{1}{2}} \left (\int_{t_n}^{t_n+T} \int_\Omega |u_t(s)|^2\,dxd\tau\right )^{\frac{1}{2}}+\|u(t_n)-\varphi\|_{L^1(\Omega)} \to 0,
\end{align*}
as $n \to \infty.$
This completes the proof of Corollary \ref{hequa:ham:lyapunov}.
\end{proof}

\begin{proof}[\bf Proof of Theorem \ref{principal:theorem:to:kai:po}\,(ii)] Let $\varphi \in \omega(u_0)$ and let $\{t_n\}$ be a sequence as in \eqref{dayso:hoitu:cho:omega:limit}.
Since $s_1 \le u(t_n) \le s_2$ for all $n \ge 0$, it follows that
$$s_1 \le \varphi \le s_2 \mbox{~~a.e. in~~}\Omega.$$
Next we show that $\varphi$ is a stationary solution of $(P')$.
It follows from Corollary \ref{hequa:ham:lyapunov} that for all
$\tau\ge 0$,
$$u(t_n+\tau) \to \varphi\mbox{~~in~~}L^1(\Omega) \mbox{~~as~~} n
\to\infty.$$
Therefore the uniform boundedness of $u$ in $\Omega \times (0, \infty)$ and the Lipschitz continuity of $f$ imply that for all $\tau\ge 0$,
$$f(u(t_n+\tau))-\langle f(u(t_n+\tau)) \rangle  \to f(\varphi)-\langle f(\varphi) \rangle \mbox{~~in~~}L^2(\Omega) \mbox{~~as~~}n \to\infty.$$
Hence
$$\int_\Omega |u_t(t_n+\tau)|^2\,dx \to \int_\Omega  [f(\varphi)-\langle f(\varphi) \rangle]^2 \,dx \mbox{~~as~~}n \to\infty.$$
This and \eqref{dk:tinhchat:sdfse,obbbjf} imply
$$f(\varphi)=\langle f(\varphi) \rangle \mbox{~~a.e. in~~}\Omega.$$
Thus $\varphi$ is a stationary solution of Problem $(P')$.
\end{proof}

\begin{corollary}\label{cor:omega-charact}
Let $u_0 \in L^\infty(\Omega)$ and let $\varphi \in \omega(u_0)$. Then up to modification on a set of zero measure, $\varphi$ is a step function.
More precisely, the following hold:
\begin{enumerate}[label=\emph{(\roman*)}]
\item If $\langle f(\varphi) \rangle  \not\in[f(m), f(M)]$, then
$$\varphi(x) = \langle u_0 \rangle \mbox{~~for a.e.~~}x\in\Omega.$$
\item  If $\langle f(\varphi) \rangle \in (f(m), f(M))$, then
$$\varphi=a_- \chi_{A_-}+a_0 \chi_{A_0}+a_+ \chi_{A_+},$$
where
$s_*<a_- < m ,a_0 \in (m, M),M<a_+<s^*$ satisfy
$$f(a_-)=f(a_0)=f(a_+)=\langle f(\varphi) \rangle;$$
and $A_-, A_0 ,A_+$ are pairwise disjoint subsets of $\Omega$ such that
$$A_- \cup A_0 \cup A_+=\Omega.$$
\item If $\langle f(\varphi) \rangle=f(m)$, then
$$\varphi=m \chi_{A_-}+s^*\chi_{A_+},$$
 where $A_-$ and $A_+$ are disjoint and $A_- \cup A_+=\Omega.$
\item If $\langle f(\varphi) \rangle=f(M)$, then
$$\varphi=s_* \chi_{A_-}+M\chi_{A_+},$$
 where $A_-$ and $A_+$ are disjoint and $A_- \cup A_+=\Omega$.
\end{enumerate}
\end{corollary}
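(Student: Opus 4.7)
The starting point is Theorem~\ref{principal:theorem:to:kai:po}\,(ii), which asserts that any $\varphi \in \omega(u_0)$ is a stationary solution of $(P')$: it satisfies $f(\varphi(x)) = c$ for a.e.\ $x \in \Omega$, where $c := \langle f(\varphi) \rangle$ is a constant depending on $\varphi$. Hence $\varphi$ takes its values, almost everywhere, in the level set $f^{-1}(c)$, and the corollary reduces to an entirely elementary case analysis on the shape of that level set under the hypotheses $(F_1), (F_2)$.

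My first step would be to catalogue $f^{-1}(c)$ as $c$ varies. Under $(F_1)$, $f$ is strictly monotone on each of the three intervals $(-\infty, m]$, $[m, M]$, $[M, +\infty)$ (decreasing, increasing, decreasing respectively). Reading off the graph gives the four regimes that exactly match the statement: if $c \notin [f(m), f(M)]$, then $f^{-1}(c)$ is a singleton; if $c = f(m)$, then the monotonicity on $[M, +\infty)$ together with the identity $f(s^*) = f(m)$ from $(F_2)$ forces $f^{-1}(c) = \{m, s^*\}$; if $c = f(M)$, symmetrically, $f^{-1}(c) = \{s_*, M\}$; and if $f(m) < c < f(M)$, then $f^{-1}(c) = \{a_-, a_0, a_+\}$ with $a_- < m < a_0 < M < a_+$. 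The sharper inclusions $a_- \in (s_*, m)$ and $a_+ \in (M, s^*)$ in the last case follow from $f(s_*) = f(M) > c$ and $f(s^*) = f(m) < c$, combined with the monotonicity of $f$ on $(-\infty, m)$ and $(M, +\infty)$ respectively.

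With this catalogue in hand, the four cases of the corollary fall out at once. In case (i), $\varphi$ equals a single value $a$ almost everywhere; taking the $L^1$-limit of the mass conservation identity from Theorem~\ref{principal:theorem} along any sequence $t_n \to \infty$ with $u(t_n) \to \varphi$ gives $\langle \varphi \rangle = \langle u_0 \rangle$, hence $a = \langle u_0 \rangle$. In cases (ii)--(iv), I would define $A_{*} := \{x \in \Omega : \varphi(x) = a_{*}\}$ for each value $a_{*} \in f^{-1}(c)$; these are pairwise disjoint measurable subsets of $\Omega$ whose union covers $\Omega$ modulo a null set (since $\varphi(x) \in f^{-1}(c)$ for a.e.\ $x$), and the step-function representations in the statement follow by direct assembly.

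I do not anticipate a serious obstacle: once Theorem~\ref{principal:theorem:to:kai:po}\,(ii) is in hand, the proof is purely structural. The one point that requires genuine care is the precise localisation of the outer roots in case (ii), namely $s_* < a_- < m$ and $M < a_+ < s^*$; it is exactly here that $(F_2)$ (and not merely $(F_1)$) is invoked in an essential way.
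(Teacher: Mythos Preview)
Your proposal is correct and follows essentially the same approach as the paper: invoke Theorem~\ref{principal:theorem:to:kai:po}\,(ii) to reduce to the level set $f^{-1}(c)$, catalogue its cardinality case by case, and use mass conservation for case~(i). Your discussion of why $a_-\in(s_*,m)$ and $a_+\in(M,s^*)$ via $(F_2)$ is in fact more explicit than the paper's own proof, which leaves that verification implicit.
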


\begin{proof}
(i) Since $\langle f(\varphi) \rangle \not\in [f(m), f(M)]$, the equation $f(s)= \langle f(\varphi) \rangle$ has a unique solution.
Therefore $\varphi(x) $ is constant on $\Omega$ (in the sense of almost everywhere). By the mass conservation property, we have
$$\varphi(x)=\langle  \varphi \rangle =\langle  u_0 \rangle \mbox{~~for a.e.~~}x\in\Omega.$$

(ii) If $\langle f(\varphi) \rangle \in (f(m), f(M))$, then the equation $f(s)=\langle f(\varphi) \rangle$ has exactly three solutions which are denoted by $a_-, a_0, a_+$ with
$a_-<a_0<a_+.$
Hence $\varphi$ takes at most three values $a_-, a_0, a_+$. This proves (ii).

(iii), (iv) The proof (iii) and (iv) are similar to that of (i) and (ii). We omit them.
\end{proof}

\begin{proof}[\bf Proof of Theorem \ref{theorem:multi-stable:case}]
The inequality $a_n\leq u(x,t)\leq b_n$ follows from Lemma 
\ref{lem:ab-bound}. Once this inequality is shown, the rest of 
the proof of Theorem \ref{theorem:multi-stable:case} is virtually 
the same as that of Theorems \ref{principal:theorem} and 
\ref{principal:theorem:to:kai:po}. 
\end{proof}


\section{Proof of Theorem \ref{principal:theorem2}}
\label{section:principal123:theorem2}

In this section we discuss the large time behavior of the solution
under the assumption $\langle u_0\rangle\not\in[s_*,s^*]$ and
prove Theorem \ref{principal:theorem2}.
For later convenience, we introduce the differential operator
$\mathcal L$:
\begin{equation}\label{definition:operator:L}
\mathcal L(Z(t))=\dot Z(t)-f(Z(t)) +\lambda(t),
\end{equation}
where $\lambda(t)$ is as in \eqref{definition:lamda:t}.
The following comparison principle is quite standard in the ODE
theory; see, e.g., \cite[Theorem 6.1, page 31]{chap2hale}.

\begin{proposition}\label{theorem-dinhly-sosanh}
Let $T > 0$ and let $Z_1, Z_2 \in C^1([0,T])$ satisfy
\begin{equation*}
\begin{cases}
\mathcal L (Z_1(t)) \le \mathcal L (Z_2(t)) \mbox{~~for all~~} t \in [0, T],\vspace{6pt}\\
Z_1(0) \le Z_2(0).
\end{cases}
\end{equation*}
Then
$$Z_1(t) \le Z_2(t) \mbox{~~for all~~} t \in [0, T].$$
\end{proposition}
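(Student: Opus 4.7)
The plan is to reduce the comparison to a Gronwall-type inequality for the difference. First I would set $V(t):=Z_1(t)-Z_2(t)$, so that the hypothesis $\mathcal L(Z_1)\le\mathcal L(Z_2)$ rewrites as
\begin{equation*}
\dot V(t)\le f(Z_1(t))-f(Z_2(t)),\qquad t\in[0,T],
\end{equation*}
while the initial hypothesis gives $V(0)\le 0$. Since $Z_1,Z_2\in C^1([0,T])$ are bounded and $f$ is locally Lipschitz (this is the only regularity the paper ever uses for $f$ in the ODE theory), there exists a constant $L>0$ such that $|f(Z_1(t))-f(Z_2(t))|\le L|V(t)|$ for every $t\in[0,T]$. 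Hence
\begin{equation*}
\dot V(t)\le L\,|V(t)|\quad\text{on }[0,T].
\end{equation*}

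Next I would work with the positive part $V_+(t):=\max(V(t),0)$, which is absolutely continuous with $\dot V_+(t)=\dot V(t)\,\mathbf 1_{\{V(t)>0\}}$ a.e. On the set $\{V>0\}$ one has $|V|=V_+$, so the inequality above yields $\dot V_+\le L\,V_+$ a.e. on $[0,T]$, while $V_+(0)=0$ by hypothesis. Gronwall's inequality then gives $V_+(t)\le V_+(0)\,e^{Lt}=0$ for every $t\in[0,T]$, which means $V(t)\le 0$, i.e.\ $Z_1(t)\le Z_2(t)$ on $[0,T]$, as desired.

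I do not anticipate any real obstacle: this is the classical linearization-plus-Gronwall argument, and the only point worth being careful about is justifying that $f$ can be treated as globally Lipschitz on the range of $Z_1,Z_2$, which follows from the local Lipschitz continuity of $f$ and the boundedness of $Z_1,Z_2$ on the compact interval $[0,T]$. If one prefers a contradiction-style argument instead of the $V_+$ trick, one can alternatively let $t_1:=\sup\{t\in[0,T]:V\le 0\text{ on }[0,t]\}$; if $t_1<T$, continuity forces $V(t_1)=0$, and applying Gronwall to $\dot V\le LV$ on $[t_1,T]$ again yields $V\le 0$ there, contradicting the definition of $t_1$. Either route delivers the conclusion in a few lines, and the paper's reference to \cite{chap2hale} merely confirms that no novelty is intended at this step.
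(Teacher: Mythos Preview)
Your argument is correct. The paper does not actually supply its own proof of this proposition: it merely states that the result is ``quite standard in the ODE theory'' and refers the reader to \cite[Theorem 6.1, p.~31]{chap2hale}, so there is nothing to compare against, and your Gronwall/positive-part argument is a perfectly valid and standard way to justify the claim.
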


The following lemma will play an important role in this and the
next section.
\begin{lemma}\label{lemma:test:thunghiem}
Let $u$ be the solution of Problem $(P')$ with $u_0 \in L^\infty(\Omega)$ and let $\lambda$ be as defined in \eqref{definition:lamda:t}.
Let $\delta>0,$  $k \in \R$ be constants satisfying
$$\inf_{s \in \R} f(s)<k-\delta<k+\delta<\sup_{s \in \R} f(s),$$
and set
$$\alpha_-=\min\{s \in \R: f(s)=k+\delta\}, \quad \alpha_+=\max\{s \in \R: f(s)=k-\delta\}.$$
Let $s_1, s_2$ be as in $\bf (H)$.
Then, for each $\e>0$, there exists $T_0=T_0(\delta,\e,k,s_1,s_2)>0$ such that if
\begin{equation} \label{new:===giatthiet:alpha:to:2}
 |\lambda(t)-k| \le \delta \mbox{~~for all~~} t \in [\tau, \tau+T_0], \mbox{~~with some~~} \tau \ge 0,
\end{equation}
then
\begin{equation*}\label{ketluan:alpha:to:3:iyuggjt9800}
\alpha_- -\e \le Y(\tau+T_0; s) \le \alpha_+ + \e \mbox{~~for all~~}s\in[s_1, s_2].
\end{equation*}
\end{lemma}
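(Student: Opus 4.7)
The plan is to combine the comparison principle (Proposition \ref{theorem-dinhly-sosanh}) with a quantitative descent estimate for the non-autonomous scalar ODE $\dot Y=f(Y)-\lambda(t)$. Roughly, under \eqref{new:===giatthiet:alpha:to:2} the constants $\alpha_+$ and $\alpha_-$ act respectively as a super- and a sub-solution, trapping $Y(\,\cdot\,;s)$ once it enters $[\alpha_-,\alpha_+]$, while outside a slightly enlarged interval the vector field has a uniform one-sided sign that drives $Y$ back in bounded time.

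First I would verify the barrier property. Since $f(\alpha_+)=k-\delta\le\lambda(t)$ on $[\tau,\tau+T_0]$, the constant $Z\equiv\alpha_+$ satisfies $\mathcal L(\alpha_+)=\lambda(t)-(k-\delta)\ge 0=\mathcal L(Y)$, so Proposition \ref{theorem-dinhly-sosanh} yields: whenever $Y(t^*;s)\le\alpha_+$ for some $t^*\in[\tau,\tau+T_0]$, one has $Y(t;s)\le\alpha_+$ throughout $[t^*,\tau+T_0]$. The symmetric argument with $Z\equiv\alpha_-$ gives the trapping from below. Combined with the global bound $s_1\le Y(t;s)\le s_2$ furnished by Lemma \ref{lem:ab-bound} and Corollary \ref{he qua:lalllaa:dfes}, this already disposes of the degenerate cases $\alpha_+ +\varepsilon\ge s_2$ or $\alpha_-\!-\varepsilon\le s_1$, in which the desired conclusion is automatic.

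For the nontrivial case I would extract a uniform descent rate. Using the maximality of $\alpha_+$ among roots of $f(\cdot)=k-\delta$ together with the bistable geometry in $(F_1),(F_2)$, one checks that $f(y)<k-\delta$ strictly for every $y>\alpha_+$ (regardless of whether $\alpha_+$ lies in $(-\infty,m]$, equals $M$, or lies in $(M,\infty)$). Continuity of $f$ on the compact set $[\alpha_+ +\varepsilon,\,s_2]$ then furnishes $\mu=\mu(\varepsilon,k,\delta,s_2)>0$ with $f(y)-(k-\delta)\le -\mu$ there, so whenever $Y(t;s)\in[\alpha_+ +\varepsilon,\,s_2]$,
\[
\dot Y(t;s)=f(Y)-\lambda(t)\le f(Y)-(k-\delta)\le -\mu,
\]
and $Y$ must drop to the level $\alpha_+ +\varepsilon$ within time $(s_2-\alpha_+ -\varepsilon)/\mu$; the barrier property then prevents any later return above that level (at $Y=\alpha_+ +\varepsilon$ one still has $\dot Y\le -\mu<0$). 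A symmetric analysis on $[s_1,\,\alpha_-\!-\varepsilon]$ supplies the companion lower threshold time, and $T_0$ is chosen as the maximum of the two — uniform in $s\in[s_1,s_2]$ and in $\tau$. The only point requiring attention, but an easy one, is establishing the strict gap $f(y)<k-\delta$ for $y>\alpha_+$ (and symmetrically for $\alpha_-$), which is forced by the bistable shape and the max-root condition and, via compactness, yields the positive $\mu$ that powers the estimate. I do not foresee a serious obstacle.
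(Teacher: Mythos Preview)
Your argument is correct. The route differs from the paper's in one organisational point: instead of extracting a uniform rate $\mu$ on $[\alpha_+\!+\e,\,s_2]$ and integrating the differential inequality $\dot Y\le-\mu$ by hand, the paper packages the same information into two \emph{autonomous} comparison solutions. It lets $Z_1,Z_2$ solve $\dot Z_1=f(Z_1)-(k+\delta)$, $\dot Z_2=f(Z_2)-(k-\delta)$ with $Z_1(0)=s_1$, $Z_2(0)=s_2$, observes that scalar autonomous flows are monotone so $\lim Z_2\le\alpha_+$ and $\lim Z_1\ge\alpha_-$, and picks $T_0$ once $Z_1,Z_2$ are $\e$-close to those limits; then a single application of Proposition~\ref{theorem-dinhly-sosanh} (with $\widetilde Z_i(t)=Z_i(t-\tau)$) sandwiches $Y(\cdot;s)$ between $\widetilde Z_1$ and $\widetilde Z_2$ on $[\tau,\tau+T_0]$. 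This avoids the separate barrier discussion and the case analysis locating $\alpha_\pm$ relative to $m,M$. Your approach, in return, yields an explicit formula $T_0\le (s_2-s_1)/\mu$ and makes the mechanism (strict sign of $f-(k\mp\delta)$ outside $[\alpha_-,\alpha_+]$) fully transparent; the paper's version hides that inside the innocuous phrase ``it is easy to see that $\lim Z_2\le\alpha_+$''. Both rest on the same inequality $\dot Y\le f(Y)-(k-\delta)$ under \eqref{new:===giatthiet:alpha:to:2}; you integrate it crudely, the paper integrates it exactly.
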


\begin{figure}[!h]
\begin{center}
\includegraphics[scale=0.5]{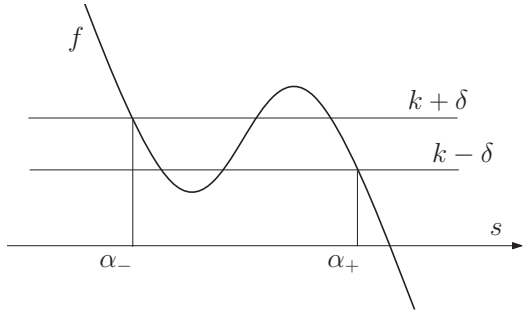}
\caption{A typical image of $\alpha_-, \alpha_+$}
\label{default}
\end{center}
\end{figure}

\begin{proof}
Let $Z_1, Z_2 \in C^1([0, \infty))$ be the unique solutions of the
problems
\begin{equation*}
 \ \left\{
\begin{array}{ll}
\dot{Z_1}=f(Z_1)- k-\delta, \quad t > 0, \vspace{6pt} \\
Z_1(0)=s_1,
\end{array}
\right.  \quad \mbox{and} \quad
\ \left\{
\begin{array}{ll}
\dot{Z_2}=f(Z_2)- k+\delta, \quad t > 0, \vspace{6pt} \\
Z_2(0)=s_2.
\end{array}
\right.
\end{equation*}
It is easy to see that
$\lim_{t \to \infty} Z_1(t) \ge \alpha_-$ and that $\lim_{t \to \infty} Z_2(t) \le \alpha_+$. Thus there exists $T_0>0$ such that
\begin{equation}\label{bdt:Z-1=Z-2=213:to:4}
Z_1(T_0)\ge \alpha_--\e, \quad Z_2(T_0)\le \alpha_++\e.
\end{equation}
Now assume that \eqref{new:===giatthiet:alpha:to:2} holds for the above $T_0$ and some $\tau \ge 0$.
Set
$$\widetilde Z_1(t):=Z_1(t-\tau), \quad  \widetilde Z_2(t):=Z_2(t-\tau) \mbox{~~for~~} t \ge \tau.$$
Then, by Corollary \ref{he qua:lalllaa:dfes}, we have
$$\widetilde Z_1(\tau)=s_1 \le Y(\tau;s) \le s_2=\widetilde Z_2(\tau) \mbox{~~for all~~} s \in [s_1, s_2].$$
Let $\mathcal L$ be as defined in \eqref{definition:operator:L}. It follows from \eqref{new:===giatthiet:alpha:to:2} that
$$\mathcal L (\widetilde Z_1(t)) \le \mathcal L(Y(t;s))=0 \le \mathcal L (\widetilde Z_2(t)) \mbox{~~for all~~} t \in [\tau, \tau+T_0], s \in [s_1, s_2].$$
Hence, by Proposition \ref{theorem-dinhly-sosanh}, we have
$$\widetilde Z_1(t) \le Y(t;s) \le \widetilde Z_2(t) \mbox{~~for all~~} t \in [\tau, \tau+T_0], s \in [s_1, s_2].$$
Setting $t= \tau+T_0$ and recalling \eqref{bdt:Z-1=Z-2=213:to:4}, we obtain
$$\alpha_- -\e \le Y(\tau+T_0; s) \le \alpha_+ + \e \mbox{~~for all~~}s\in[s_1, s_2].$$
The lemma is proved.
\end{proof}

\begin{lemma}\label{to100:bodecho1.4}
Let $u$ be the solution of Problem $(P')$ with $u_0 \in L^\infty(\Omega)$.
Assume furthermore that $\langle u_0 \rangle \not \in [s_*, s^*]$. Then
\begin{enumerate}[label=\emph{(\roman*)}]
\item $\omega(u_0) =\{\langle u_0 \rangle\}$,
\item if $\langle u_0 \rangle < s_*$ (resp. $\langle u_0 \rangle > s^*$),
then there exists $t_1 \ge 0$ such that for all $t \ge t_1$,
$$u(x,t)  \le  s_* \mbox{~~(resp.~~}\ge  s^*)\mbox{~~for a.e.~~} x\in \Omega.$$
\end{enumerate}

\begin{proof}
(i) We first prove that
\begin{equation}\label{to10:cmrang:varphi:outsides*s*}
\langle f(\varphi) \rangle \not\in [f(m), f(M)] \mbox{~~for all~~}\varphi \in \omega(u_0).
\end{equation}
Suppose, to the contrary, that there exists  $\varphi \in \omega(u_0)$ such that $\langle f(\varphi) \rangle \in [f(m), f(M)].$
 Then, in view of Corollary \ref{cor:omega-charact}\,(ii), (iii) and (iv), we have
$$s_* \le \varphi(x) \le s^*\mbox{~~for a.e.~~}x \in \Omega.$$
This together with the mass conservation yields
$$\langle u_0 \rangle =\langle \varphi \rangle \in [s_*, s^*],$$
which contradicts the hypothesis of the lemma. Thus \eqref{to10:cmrang:varphi:outsides*s*} holds.
Consequently,  by Corollary \ref{cor:omega-charact}\,(i),  any element $\varphi \in \omega(u_0)$ satisfies
$$\varphi(x)=\langle  u_0 \rangle \mbox{~~for a.e.~~}x\in\Omega.$$
Therefore, $\omega(u_0)$ has the unique element $\langle u_0 \rangle$.

(ii) We only consider the case where $\langle u_0 \rangle < s_*$.
The proof of the other case is similar and we omit it. Set
$$m_0 :=\langle u_0 \rangle <s_*.$$
Choose $\delta>0$ such that $f(m_0)-\delta>f(s_*)$. Then the equation $f(s)=f(m_0)+\delta$ (resp. $f(s)=f(m_0)-\delta$) has a unique root, which we denote by $\alpha_-$ (resp. $\alpha_+$).
Clearly we have
$$\alpha_-< m_0<\alpha_+<s_*.$$
 Choose $\e>0$ such that
$$\alpha_-- \e <\alpha_++\e <s_*.$$
It follows from (i) that $u(t) \to m_0$ in $L^1(\Omega)$ as $t \to \infty$. Hence
$\lambda(t) \to f(m_0)$ as $t \to \infty$. Consequently, there exits $\tau \ge 0$ such that
\begin{equation}\label{estimate:lmabda:hseto90}
|\lambda(t) -m_0| \le \delta \mbox{~~for all~~} t \ge \tau.
\end{equation}
Let $s_1, s_2$ be as in $\bf (H)$.
Then it follows from \eqref{estimate:lmabda:hseto90} and Lemma \ref{lemma:test:thunghiem} that there exists $T_0>0$ such that
\begin{align*}
\alpha_- -\e \le Y(\tau+T_0,s) \le \alpha_+ + \e \mbox{~~for all~~}s\in[s_1, s_2].
\end{align*}
This, together with \eqref{problem:ODE:22:6:3:14}, implies
$$\alpha_-  -\e \le u(x,\tau+T_0) \le \alpha_+ + \e\mbox{~~for a.e.~~}x\in\Omega.$$
In particularly, $u(x,\tau+T_0) \le s_*\mbox{~~for a.e.~~}x\in\Omega.$
Hence, by Corollary \ref{moiquanheproblemP-ODE}\,(ii), we have for all $t \ge \tau+T_0$,
$$u(x,t) \le s_* \mbox{~~for a.e.~~}x\in\Omega.$$
This completes the proof of Lemma \ref{to100:bodecho1.4}.
\end{proof}

\end{lemma}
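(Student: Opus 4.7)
The plan is to prove (i) first by combining the structural characterization of stationary solutions in Corollary~\ref{cor:omega-charact} with mass conservation, and then to deduce (ii) by invoking the comparison ODE machinery of Lemma~\ref{lemma:test:thunghiem}.

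For part (i), I would fix an arbitrary $\varphi \in \omega(u_0)$ and apply Corollary~\ref{cor:omega-charact}, which exhaustively lists the four possible profiles. In cases (ii), (iii), (iv) of that corollary the limit $\varphi$ takes values only in $[s_*, s^*]$, so $\langle \varphi \rangle \in [s_*, s^*]$; but passing to the limit in the mass-conservation identity~\eqref{mass:conservation:modify} along a defining sequence $t_n \to \infty$ yields $\langle \varphi \rangle = \langle u_0 \rangle$, which contradicts $\langle u_0 \rangle \not\in [s_*, s^*]$. Hence only case (i) of the corollary can occur, forcing $\varphi = \langle u_0 \rangle$ a.e., and therefore $\omega(u_0) = \{\langle u_0 \rangle\}$.

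For part (ii), I would assume $m_0 := \langle u_0 \rangle < s_*$ (the other case being symmetric). From (i) together with the uniform $L^\infty$-bound of Theorem~\ref{principal:theorem}, $u(t) \to m_0$ in $L^1(\Omega)$, hence by dominated convergence $\lambda(t) = \langle f(u(t)) \rangle \to f(m_0)$. The key geometric observation is that $m_0 < s_* < m$ and the strict monotonicity of $f$ on $(-\infty,m)$ give $f(m_0) > f(s_*) = f(M)$, so for $\delta > 0$ sufficiently small the level sets $\{f = f(m_0) \pm \delta\}$ each reduce to a single root in $(-\infty,m)$, yielding values $\alpha_-$ and $\alpha_+$ with $\alpha_- < m_0 < \alpha_+ < s_*$. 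I would then fix $\e > 0$ so small that $\alpha_+ + \e < s_*$, choose $\tau$ large enough that $|\lambda(t) - f(m_0)| \le \delta$ for $t \ge \tau$, and apply Lemma~\ref{lemma:test:thunghiem} with $k = f(m_0)$. This gives $Y(\tau + T_0; s) \le \alpha_+ + \e < s_*$ for every $s \in [s_1, s_2]$, hence $u(\cdot, \tau + T_0) \le s_*$ a.e. by~\eqref{problem:ODE:22:6:3:14}. Forward invariance of the set $\{u \le s_*\}$, guaranteed by Corollary~\ref{moiquanheproblemP-ODE}(ii), then propagates the inequality to all $t \ge \tau + T_0$.

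The main obstacle is really the logical order: part (ii) needs $\lambda(t)$ to have a definite limit in order to feed Lemma~\ref{lemma:test:thunghiem}, and this limit is only identified through (i). So the entire argument rests on the full classification of stationary profiles in Corollary~\ref{cor:omega-charact}; once (i) is secured, the remainder is routine phase-line analysis exploiting the strict monotonicity of $f$ to the left of $m$.
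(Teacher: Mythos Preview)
Your proposal is correct and follows essentially the same route as the paper's proof. For (i) you eliminate cases (ii)--(iv) of Corollary~\ref{cor:omega-charact} via mass conservation exactly as the paper does, and for (ii) you reproduce the paper's argument step by step: identify $\lambda(t)\to f(m_0)$ from part~(i), pick $\delta$ so that the relevant level sets of $f$ lie strictly below $s_*$, invoke Lemma~\ref{lemma:test:thunghiem} with $k=f(m_0)$, and finish with Corollary~\ref{moiquanheproblemP-ODE}(ii). (Incidentally, your inequality $|\lambda(t)-f(m_0)|\le\delta$ is what is actually needed; the paper's displayed inequality~\eqref{estimate:lmabda:hseto90} has a typographical slip writing $m_0$ in place of $f(m_0)$.)
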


\begin{proof}[\bf Proof of Theorem \ref{principal:theorem2}]
It follows from Lemma \ref{to100:bodecho1.4}\,(i) that $\omega(u_0)=\{\langle u_0 \rangle\}.$  Hence $u(t) \to \langle u_0 \rangle$ in $L^1(\Omega)$ as $t \to \infty$.
We will show that the convergence takes place in $L^\infty(\Omega)$ and that the estimate \eqref{convergecne:rate:to:10000} holds. We will only consider the case $\langle u_0\rangle < s_*$, since the case $\langle u_0\rangle> s^*$ can be treated similarly.  Then, in view of Lemma \ref{to100:bodecho1.4}\,(ii), we may assume without loss of generality that \begin{equation}\label{giathietchungminh12chodieukiensau1khoangthoigian}
u(x, t)  \le  s_* \ \ \mbox{for \ a.e.}\ x\in \Omega, \ t\geq 0.
\end{equation}
Thus we may choose $s_1, s_2$ such that
\[
s_1<\langle u_0\rangle <s_2 \leq s_*.
\]
Since $u(t)\to \langle u_0\rangle$ in $L^1(\Omega)$ as $t\to\infty$, we have $\lambda(t):=\langle f(u(t))\rangle\to f(\langle u_0\rangle)$ as $t\to\infty$. Hence \begin{equation}\label{lambda-s1s2}
f(s_1) \geq \lambda(t)\geq f(s_2)
\end{equation}
for all large $t\geq 0$. Without loss of generality we may assume that \eqref{lambda-s1s2} holds for all $t\geq 0$. Now we set $v_1(t):=Y(t;s_1),\, v_2(t):=Y(t;s_2)$. Then
\[
\dot v_1 =f(v_1)-\lambda(t),\ \ v_1(0)=s_1,\quad\ \ \dot v_2 =f(v_2)-\lambda(t),\ \ v_2(0)=s_2.
\]
Therefore, by \eqref{lambda-s1s2},
\[
{\mathcal L}(s_1)\leq 0={\mathcal L}(v_1)={\mathcal L}(v_2)
\leq {\mathcal L}(s_2).
\]
Hence, by Proposition \ref{theorem-dinhly-sosanh} and the
monotonicity of $Y(t;s)$ in $s$, we have
\begin{equation}\label{order5}
s_1\leq v_1(t)\leq u(x,t)\leq v_2(t) \leq s_2 \quad\ \hbox{for \ \ a.e.}\ x\in\Omega,\ t\geq 0.
\end{equation}
Define $\rho(t):=v_2(t)-v_1(t)$. Then
\[
\dot \rho=f(v_2)-f(v_1)=f'(\xi(t))\rho,
\]
where $\xi(t)\in[v_1(t),v_2(t)]\subset[s_1,s_2]$.  Hence, by \eqref{order5},
\[
\dot \rho(t)\leq -\mu\rho(t),\quad\hbox{where}\ \ \ -\mu:=\max_{s\in[s_1,s_2]} f'(s)<0.
\]
Consequently
\[
v_2(t)-v_1(t):=\rho(t) = O(e^{-\mu t}).
\]
Combining this and \eqref{order5}, we obtain
\[
\Vert u(t)-\langle u_0\rangle\Vert_{L^\infty(\Omega)}=
O(e^{-\mu t}).
\]
This completes the proof of Theorem \ref{principal:theorem2}.
\end{proof}


\section{Proof of Theorem \ref{hequa:chinh4}}
\label{section:chungminh:menhde:coban}

In this section we discuss the large time behavior of the solution
under the assumption $s_* \le \langle u_0 \rangle \le s^*$ and
prove Theorem \ref{hequa:chinh4}.
The proof is divided into two steps. Subsection
\ref{sec:for:prove:bistable:case:u0instar} deals with the
special case where $s_* \le u_0(x) \le s^*$ a.e. $x\in\Omega$,
and Subsection \ref{sec:for:prove:bistable:case:u0outsidestar} deals
with the general case.


\subsection{The case $s_* \le u_0 \le s^*$ a.e. in $\Omega$}
\label{sec:for:prove:bistable:case:u0instar}

The main result of this subsection is Theorem \ref{principal:theorem3} below.
First note that for each $u_0 \in L^\infty(\Omega)$ with
$s_* \le u_0 \le s^*$ a.e. in $\Omega$ there exist subsets
$\widetilde \Omega, \mathcal N\subset\Omega$ such that
\begin{align*}
&|\mathcal N|=0, \quad \widetilde \Omega \cup \mathcal N=\Omega,\\
&s_* \le u_0(x) \le s^* \mbox{~~for all~~} x \in \widetilde \Omega.
\end{align*}
We define for each $t\ge 0$,
\begin{align*}
&\Omega_-(t):=\{x \in \widetilde \Omega: Y(t;u_0(x)) \le m\},\\
&\Omega_0(t):=\{x \in \widetilde \Omega: m < Y(t;u_0(x)) <  M\},\\
&\Omega_+(t):=\{x \in \widetilde \Omega: Y(t;u_0(x)) \ge M\}.
\end{align*}

\begin{lemma}\label{dondieu:tap:unstabel}
Suppose that
$$s_* \le u_0 \le s^*\mbox{~~a.e. in~~}\Omega.$$
 Then for every $t' >t \ge 0$, $s \in [s_*, s^*]$,
\begin{enumerate}[label=\emph{(\roman*)}]
\item if $Y(t;s) \le m$, then $Y(t';s) \le m$ for every $t' >t \ge 0$;
\item if $Y(t;s) \ge M$, then $Y(t';s) \ge M$ for every $t' >t \ge 0$.
\end{enumerate}
As a consequence, for every $t' > t \ge 0$,
$$\Omega_-(t) \subset \Omega_-(t'), ~~~~ \Omega_+(t) \subset \Omega_+(t') \mbox{~~~~and~~~~} \Omega_0(t) \supset \Omega_0(t').$$
In other words, $\Omega_-(t), \Omega_+(t)$ are monotonically expanding in $t$ while $\Omega_0(t)$ is monotonically shrinking in $t$.
\end{lemma}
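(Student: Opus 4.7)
The plan is to reduce both monotonicity statements to an application of the comparison principle for $(ODE)$, Proposition \ref{theorem-dinhly-sosanh}, using the constants $m$ and $M$ as stationary sub-/super-solutions.

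The preliminary step is to pin down the range of $\lambda(t)=\langle f(u(t))\rangle$ under the hypothesis $s_* \le u_0 \le s^*$ a.e. By Corollary \ref{moiquanheproblemP-ODE}\,(i), the solution satisfies $s_* \le u(x,t) \le s^*$ a.e. for every $t \ge 0$. From $\bf(F_1),(F_2)$ one reads off that $f$ on $[s_*,s^*]$ attains its minimum $f(m)$ (achieved at $s^*$ and $m$) and its maximum $f(M)$ (achieved at $s_*$ and $M$). Therefore $f(u(x,t)) \in [f(m), f(M)]$ a.e., and averaging gives $\lambda(t) \in [f(m), f(M)]$ for every $t\ge 0$. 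This is the only place where the assumption $s_*\leq u_0\leq s^*$ enters.

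For (i), fix $t\ge 0$, $s\in[s_*,s^*]$ with $Y(t;s)\le m$, and compare $Y(\cdot;s)$ with the constant function equal to $m$ on a time interval $[t,t']$. Writing out $\mathcal L$ from \eqref{definition:operator:L}, one has $\mathcal L(Y(\cdot;s))\equiv 0$ by definition of $Y$, while $\mathcal L(m)=-f(m)+\lambda(\cdot)\ge 0$ by the previous step. Thus $\mathcal L(Y(\cdot;s))\le \mathcal L(m)$ on $[t,t']$ together with the initial inequality $Y(t;s)\le m$; Proposition \ref{theorem-dinhly-sosanh} (applied after translating so that $t$ plays the role of time $0$) then yields $Y(t';s)\le m$. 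Statement (ii) is symmetric: compare $Y(\cdot;s)$ with the constant $M$ from above, using $\mathcal L(M)=-f(M)+\lambda(\cdot)\le 0=\mathcal L(Y(\cdot;s))$ and $M\le Y(t;s)$.

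Finally, the set-level consequence is formal. If $x\in\Omega_-(t)$, then by definition $x\in\widetilde\Omega$ and $Y(t;u_0(x))\le m$, so (i) gives $Y(t';u_0(x))\le m$, i.e.\ $x\in\Omega_-(t')$; an identical argument with (ii) gives $\Omega_+(t)\subset\Omega_+(t')$. Since $\Omega_-(t),\Omega_0(t),\Omega_+(t)$ are pairwise disjoint and their union equals $\widetilde\Omega$ for every $t$, the shrinkage $\Omega_0(t)\supset\Omega_0(t')$ is immediate.

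I do not expect a genuine obstacle here. The only point requiring care is the sign of $\lambda(t)-f(m)$ and $\lambda(t)-f(M)$, which is precisely what the invariance $u(\cdot,t)\in[s_*,s^*]$ (from Corollary \ref{moiquanheproblemP-ODE}\,(i)) guarantees; once this is in hand the comparison with the constants $m$ and $M$ is standard.
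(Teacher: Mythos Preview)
Your proof is correct and follows essentially the same approach as the paper: both arguments use Corollary \ref{moiquanheproblemP-ODE}\,(i) to confine $u$ to $[s_*,s^*]$, deduce $f(m)\le\lambda(t)\le f(M)$ and hence $\mathcal L(m)\ge 0$, $\mathcal L(M)\le 0$, and then apply the comparison principle (Proposition \ref{theorem-dinhly-sosanh}) with the constants $m$ and $M$. Your version simply spells out a few more of the intermediate details.
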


\begin{proof}
It follows from Corollary \ref{moiquanheproblemP-ODE}\,(i) that
for all $t \ge 0$,
$$s_* \le u(x,t) \le s^* \mbox{~~for a.e.~~}x\in \Omega,$$
so that
\begin{equation*}\label{tinhchat:lambda:t:sao:s}
f(m) \le \langle f(u(t)) \rangle = \lambda(t) \le f(M).
\end{equation*}
Consequently
\begin{equation*}\label{11bdthucviphanorsay187}
\mathcal {L}(m) \ge 0 \mbox{~~and~~}\mathcal {L}(M) \le 0,
\end{equation*}
where $\mathcal L$ is as defined in \eqref{definition:operator:L}.
Hence (i), (ii) follow from Proposision \ref{theorem-dinhly-sosanh}.
\end{proof}

We define
\begin{equation}\label{dinhnghia:hop:car:omega,levelset}
\Omega_-(\infty):=\bigcup_{t \ge 0} \Omega_-(t), \quad \Omega_0(\infty):=\bigcap_{t \ge 0} \Omega_0(t), \quad \Omega_+(\infty):=\bigcup_{t \ge 0} \Omega_+(t).
\end{equation}
The following result is an immediate consequence of Lemma
\ref{dondieu:tap:unstabel}.

\begin{corollary} \label{H:Q/W/S:12:09:asfspdfjf123}
Suppose that $s_* \le u_0 \le s^* \mbox{~~a.e. in~~}\Omega$.
Then
$\Omega_-(\infty), \Omega_0(\infty), \Omega_+(\infty)$ are  pairwise disjoint and
$$\Omega=\Omega_-(\infty)\cup \Omega_0(\infty) \cup \Omega_+(\infty) \cup \mathcal N.$$
Moreover,
\begin{align*}
 \Omega_-(\infty)&=\{x\in \widetilde \Omega:  \exists t_0 \ge 0  \mbox{~~such that~~}  Y(t;u_0(x))\le m \mbox{~~for all~~} t \ge t_0\}, \\
\Omega_0(\infty)&=\{x\in \widetilde \Omega:  m<Y(t;u_0(x))<M \mbox{~~for all~~} t \ge 0\},\\
 \Omega_+(\infty)&=\{x\in \widetilde \Omega:  \exists t_0 \ge 0  \mbox{~~such that~~}  Y(t;u_0(x))\ge M \mbox{~~for all~~} t \ge t_0\}.
\end{align*}
\end{corollary}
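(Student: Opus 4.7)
The plan is to derive everything from Lemma~\ref{dondieu:tap:unstabel} by purely set-theoretic bookkeeping. The key observation that makes the bookkeeping work is that, for each fixed $t \ge 0$, the three sets $\Omega_-(t), \Omega_0(t), \Omega_+(t)$ are by definition mutually disjoint (since the inequalities $Y(t;u_0(x)) \le m$, $m<Y(t;u_0(x))<M$, $Y(t;u_0(x)) \ge M$ are mutually exclusive) and their union is $\widetilde\Omega$. Since $\widetilde\Omega \cup \mathcal N = \Omega$ and $|\mathcal N|=0$, the partition of $\Omega$ will follow once the corresponding statements are upgraded from finite $t$ to $t=\infty$.

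First I would establish the three stated characterizations. For $\Omega_-(\infty) = \bigcup_{t\ge 0}\Omega_-(t)$, the inclusion ``$\supseteq$'' is trivial, while ``$\subseteq$'' follows from Lemma~\ref{dondieu:tap:unstabel}(i): if $x \in \Omega_-(t_0)$ for some $t_0$, then $Y(t_0;u_0(x)) \le m$, hence $Y(t;u_0(x)) \le m$ for every $t \ge t_0$. The case of $\Omega_+(\infty)$ is symmetric, using Lemma~\ref{dondieu:tap:unstabel}(ii). For $\Omega_0(\infty) = \bigcap_{t\ge 0}\Omega_0(t)$, the characterization is immediate from the definition of intersection.

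Next I would verify pairwise disjointness from these characterizations. If $x$ belonged to $\Omega_-(\infty) \cap \Omega_0(\infty)$ then $Y(t;u_0(x)) \le m$ for all $t \ge t_0$ while simultaneously $Y(t;u_0(x)) > m$ for every $t \ge 0$, which is impossible; the pair $\Omega_+(\infty)\cap\Omega_0(\infty)$ is symmetric. For $\Omega_-(\infty) \cap \Omega_+(\infty)$, membership in both would give $Y(t;u_0(x)) \le m$ for all $t \ge t_0$ and $Y(t;u_0(x)) \ge M$ for all $t \ge t_1$; choosing any $t \ge \max\{t_0,t_1\}$ yields $M \le m$, contradicting $m<M$.

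Finally, for the covering, given $x \in \widetilde\Omega$ there are two dichotomous possibilities: either $x \in \Omega_0(t)$ for all $t \ge 0$, in which case $x \in \Omega_0(\infty)$; or there is some $t_0 \ge 0$ with $x \notin \Omega_0(t_0)$, which places $x$ in $\Omega_-(t_0) \cup \Omega_+(t_0) \subset \Omega_-(\infty)\cup\Omega_+(\infty)$. Thus $\widetilde\Omega$ is covered, and adjoining $\mathcal N$ recovers $\Omega$. There is no real obstacle here; all the analytic content has already been absorbed into Lemma~\ref{dondieu:tap:unstabel}, and the remainder is a short exercise in manipulating unions and intersections.
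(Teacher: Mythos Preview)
Your proof is correct and matches the paper's approach: the paper states this corollary as ``an immediate consequence of Lemma~\ref{dondieu:tap:unstabel}'' and gives no further argument, so you have simply written out the routine set-theoretic details that the authors omitted.
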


\begin{proposition} \label{principal:theorem3555}
Assume that
\begin{equation}\label{giathietsdfs:sdiehisehngay258888}
s_* \le u_0 \le s^* \mbox{~~a.e. in~~}\Omega.
\end{equation}
Then
\begin{equation}\label{phan1cuadinhlyquantrongt2dsfsdfsd}
\langle f(\varphi) \rangle \in [f(m), f(M)] \mbox{~~for all~~} \varphi \in \omega(u_0).
\end{equation}
Furthermore,
\begin{enumerate}[label=\emph{(\roman*)}]
\item If $f(m) <\langle f(\varphi) \rangle <f(M)$, then
$$\varphi= a_- \chi_{\Omega_-(\infty)}+a_0 \chi_{\Omega_0(\infty)}+ a_+ \chi_{\Omega_+(\infty)},$$
where $s_*<a_- < a_0 < a_+ <s^*$ are the roots of the equation
$f(s)=\langle f(\varphi) \rangle$.
\item If $\langle f(\varphi) \rangle=f(m)$, then
$$\varphi= m \chi_{\Omega_-(\infty)}+s^* \chi_{\Omega_0(\infty) \cup \Omega_+(\infty)}.$$
\item If $\langle f(\varphi) \rangle=f(M)$, then
$$\varphi= s_* \chi_{\Omega_-(\infty) \cup \Omega_0(\infty)}+M \chi_{ \Omega_+(\infty)}.$$
\end{enumerate}
\end{proposition}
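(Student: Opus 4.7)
The plan is to combine Corollary \ref{cor:omega-charact}, which classifies the values taken by any $\varphi\in\omega(u_0)$ according to $\langle f(\varphi)\rangle$, with the trajectory-based characterization of $\Omega_{-}(\infty), \Omega_{0}(\infty), \Omega_{+}(\infty)$ in Corollary \ref{H:Q/W/S:12:09:asfspdfjf123}. The first step is to establish \eqref{phan1cuadinhlyquantrongt2dsfsdfsd}. Under \eqref{giathietsdfs:sdiehisehngay258888}, Corollary \ref{moiquanheproblemP-ODE}(i) confines $u(\cdot,t)$ to $[s_*,s^*]$ for every $t\ge 0$. Since on $[s_*,s^*]$ the function $f$ attains its minimum $f(m)$ (at $m$ and $s^*$) and its maximum $f(M)$ (at $s_*$ and $M$), we have $\lambda(t)\in[f(m),f(M)]$ for all $t$, and passing to the limit along $t_n\to\infty$ with $u(t_n)\to\varphi$ in $L^1(\Omega)$ gives \eqref{phan1cuadinhlyquantrongt2dsfsdfsd}.

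For the fine structure of $\varphi$, I would extract a subsequence (still denoted $t_n$) along which $u(x,t_n)\to\varphi(x)$ a.e. Since $u(x,t) = Y(t;u_0(x))$ by \eqref{problem:ODE:22:6:3:14}, the characterizations of $\Omega_{-}(\infty),\Omega_{0}(\infty),\Omega_{+}(\infty)$ in Corollary \ref{H:Q/W/S:12:09:asfspdfjf123} yield, by taking limits, the pointwise inequalities $\varphi(x)\le m$ on $\Omega_{-}(\infty)$, $m\le\varphi(x)\le M$ on $\Omega_{0}(\infty)$, and $\varphi(x)\ge M$ on $\Omega_{+}(\infty)$. Intersecting these with the value lists from Corollary \ref{cor:omega-charact} yields case (i) directly: the three values $a_-<m<a_0<M<a_+$ are separated precisely by $m$ and $M$, forcing $\varphi=a_-,a_0,a_+$ on $\Omega_{-}(\infty), \Omega_{0}(\infty), \Omega_{+}(\infty)$ respectively. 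In the boundary cases (ii) and (iii), the same intersection determines $\varphi$ unambiguously on $\Omega_{-}(\infty)$ and $\Omega_{+}(\infty)$, producing the values $m,s^*$ (case (ii)) and $s_*,M$ (case (iii)).

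The main obstacle lies on $\Omega_{0}(\infty)$ in the boundary cases. There, the constraint $\varphi\in[m,M]$ together with the two-element value set $\{m,s^*\}$ (resp.\ $\{s_*,M\}$) forces $\varphi=m$ (resp.\ $\varphi=M$), which contradicts the stated formula unless $|\Omega_{0}(\infty)|=0$. Establishing this latter fact is the hard part. My approach would be dynamical: in case (ii), $\lambda(t_n)\to f(m)=\min_{[s_*,s^*]}f$, and for $x\in\Omega_{0}(\infty)$ the trajectory $y(t):=Y(t;u_0(x))$ stays in $(m,M)$ for all $t$ with $f(y(t))>f(m)$ strictly. One would like to argue, via comparison with the autonomous problem $\dot z=f(z)-f(m)$ (whose trajectories starting in $(m,s^*)$ flow monotonically to $s^*$) and an adaptation of Lemma \ref{lemma:test:thunghiem}, that $y$ is eventually pushed above $M$, contradicting $x\in\Omega_{0}(\infty)$. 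The technical delicacy is that the escape rate from a neighborhood of $m$ is only quadratic (since $f'(m)=0$) and that $\lambda(t)$ approaches $f(m)$ only along a subsequence rather than in the strict limit, so some care is needed to rule out trajectories that linger near $m$.
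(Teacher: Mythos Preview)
Your argument for \eqref{phan1cuadinhlyquantrongt2dsfsdfsd} and for case (i) is essentially the paper's: the paper also passes to an a.e.\ convergent subsequence, reads off the inequalities $\varphi\le m$ on $\Omega_-(\infty)$, $m\le\varphi\le M$ on $\Omega_0(\infty)$, $\varphi\ge M$ on $\Omega_+(\infty)$ from Corollary~\ref{H:Q/W/S:12:09:asfspdfjf123}, and intersects with the value lists of Corollary~\ref{cor:omega-charact}. (For \eqref{phan1cuadinhlyquantrongt2dsfsdfsd} the paper argues by contradiction via Corollary~\ref{cor:omega-charact}(i) rather than bounding $\lambda(t)$ directly, but this is a cosmetic difference.)

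Where you diverge is in cases (ii) and (iii). The paper does \emph{not} attempt to show $|\Omega_0(\infty)|=0$ here; its entire proof of (ii) and (iii) is the single sentence ``The assertions (ii), (iii) also follow from Corollary~\ref{cor:omega-charact}'', i.e.\ the same intersection argument as in (i). You have in fact detected a slip in the stated formulas: that intersection forces $\varphi=m$ on $\Omega_0(\infty)$ in case (ii) and $\varphi=M$ in case (iii), so the $\Omega_0(\infty)$ piece should be grouped with $\Omega_-(\infty)$ in (ii) and with $\Omega_+(\infty)$ in (iii), opposite to what is printed. This misprint is harmless, because the only later use of Proposition~\ref{principal:theorem3555} (in Theorem~\ref{principal:theorem3}) first establishes $|\Omega_0(\infty)|=0$ under the extra level-set hypothesis and only then invokes the formulas.

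Consequently, your plan to salvage the printed formulas by proving $|\Omega_0(\infty)|=0$ in the boundary cases is not what the paper does, and the sketch you give would not go through at this level of generality. At the stage of Proposition~\ref{principal:theorem3555} you know only that \emph{one} element $\varphi\in\omega(u_0)$ has $\langle f(\varphi)\rangle=f(m)$; you do not know that $\lambda(t)\to f(m)$, so the comparison with the autonomous flow $\dot z=f(z)-f(m)$ and the appeal to Lemma~\ref{lemma:test:thunghiem} are not available. The right fix is not to prove $|\Omega_0(\infty)|=0$ but simply to swap the grouping of $\Omega_0(\infty)$ in the two displayed formulas.
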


\begin{proof}
First we prove \eqref{phan1cuadinhlyquantrongt2dsfsdfsd}.
Indeed, if there exists $\psi \in \omega(u_0)$ such that
$\langle f(\psi) \rangle \not\in [f(m), f(M)]$, then it follows from
Corollary \ref{cor:omega-charact}\,(i) that
$$\psi \equiv \langle u_0 \rangle \not \in [s_*, s^*],$$
which is in contradiction with \eqref{giathietsdfs:sdiehisehngay258888}.
Thus \eqref{phan1cuadinhlyquantrongt2dsfsdfsd} holds.
Next we remark that there exists a sequence $t_n \to \infty$ such that
$$Y(t_n;u_0(x)) \to \varphi(x) \quad \mbox{for a.e.~} x\in \Omega
\mbox{~~as~~} n \to \infty.$$
This, together with Corollary \ref{H:Q/W/S:12:09:asfspdfjf123}, implies that
\begin{align*}
&\varphi(x) \le m \quad\quad \quad \mbox{~~for a.e.~~} x \in
\Omega_-(\infty),\\
&m \le \varphi(x) \le M \mbox{~~~~for a.e.~~} x \in \Omega_0(\infty),\\
&\varphi(x) \ge M \quad\quad\quad \mbox{~~for a.e.~~} x \in
\Omega_+(\infty).
\end{align*}
Thus we deduce from Corollary \ref{cor:omega-charact} that
\begin{align*}
&\varphi=a_- \mbox{~~a.e. in~~}\Omega_-(\infty), \quad \varphi=a_0 \mbox{~~a.e. in~~}\Omega_0(\infty),\\
&\varphi=a_+\mbox{~~a.e. in~~}\Omega_+(\infty),
\end{align*}
if $\langle f(\varphi) \rangle \in (f(m), f(M))$. This proves (i).
The assertions (ii), (iii) also follow from Corollary \ref{cor:omega-charact}.
The proposition is proved.
\end{proof}

\begin{lemma}\label{keylemma:123}
Assume that $s, \widetilde s$ satisfy
$$m<Y(t; s)<M, \quad m<Y(t; \widetilde s)<M \mbox{~~for all~~} t \ge 0.$$
Assume further that there exists a sequence $t_n \to \infty$ satisfying
\begin{equation}\label{to:300:dkien:kien:hoitu}
Y(t_n; s) \to a, \quad Y(t_n; \widetilde s) \to a \mbox{~~as~~}n \to \infty,
\end{equation}
for some $a \in [m, M]$.
Then $s=\widetilde s.$
\end{lemma}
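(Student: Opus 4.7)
The plan is to exploit the expansive character of the ODE $\dot Y = f(Y) - \lambda(t)$ inside the interval $(m,M)$, where $f$ is strictly increasing by assumption $\bf(F_1)$. The non-local multiplier $\lambda(t)$ is the \emph{same} for both trajectories, and therefore cancels when we subtract.

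More precisely, I would argue as follows. Without loss of generality assume $s \ge \widetilde s$, so that by the monotonicity of $Y(t;\cdot)$ in $s$ (Lemma \ref{cor:monotonicity:of:Y}) we have $Y(t;s) \ge Y(t;\widetilde s)$ for every $t \ge 0$. Set
\begin{equation*}
\rho(t) := Y(t;s) - Y(t;\widetilde s) \ge 0.
\end{equation*}
Subtracting the two copies of $(ODE)$, the term $\lambda(t)$ drops out and we obtain
\begin{equation*}
\dot \rho(t) = f(Y(t;s)) - f(Y(t;\widetilde s)).
\end{equation*}
By hypothesis, both $Y(t;s)$ and $Y(t;\widetilde s)$ lie in $(m,M)$ for every $t\ge 0$, and by $\bf(F_1)$ the function $f$ is strictly increasing on $(m,M)$. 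Combined with $Y(t;s)\ge Y(t;\widetilde s)$ this yields $\dot\rho(t)\ge 0$ for all $t\ge 0$, so $\rho$ is nondecreasing on $[0,\infty)$.

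The key point is then immediate: by assumption \eqref{to:300:dkien:kien:hoitu},
\begin{equation*}
\rho(t_n) = Y(t_n;s) - Y(t_n;\widetilde s) \longrightarrow a - a = 0.
\end{equation*}
Since $\rho$ is nondecreasing and nonnegative, this forces $0 \le \rho(0) \le \rho(t_n) \to 0$, whence $\rho(0) = 0$, i.e. $s = \widetilde s$.

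There is essentially no obstacle: the only thing one has to notice is that the nonlocal term is common to both trajectories and hence cancels, leaving a purely monotone-in-$s$ comparison inside the region of instability. The strict inequality $m < Y(t;\cdot) < M$ is used only to guarantee $f' \ge 0$ along the trajectories (via $\bf(F_1)$); one does not need any uniform positive lower bound on $f'$.
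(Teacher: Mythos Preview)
Your proof is correct and essentially identical to the paper's own argument: the paper also assumes (without loss of generality) one initial value is no larger than the other, defines the difference $h(t):=Y(t;\widetilde s)-Y(t;s)$, observes that $h'(t)=f(Y(t;\widetilde s))-f(Y(t;s))\ge 0$ by the monotonicity of $f$ on $(m,M)$, and concludes from $h(t_n)\to 0$ that $h(0)=0$. The only cosmetic difference is the choice of which variable is larger and the name of the difference function.
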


\begin{proof}
Assume that $s \le \widetilde s$. Then
$$m<Y(t; s) \le Y(t; \widetilde s) <M\mbox{~~for all~~}t \ge 0.$$
Since $f$ is increasing on $[m, M]$, we have
$$f(Y(t; s)) \le f(Y(t; \widetilde s))\mbox{~~for all~~}t \ge 0.$$
Consequently, the function
$h(t):= Y(t; \widetilde s)-Y(t; s)$ satisfies
$$h'(t)=\dot{Y}(t; \widetilde s)-\dot{Y}(t; s)=f(Y(t,\widetilde s))-f(Y(t,s)) \ge 0.$$
Therefore $h(t) \ge h(0)=\widetilde s-s$.
On the other hand, \eqref{to:300:dkien:kien:hoitu} yields
$\lim_{n \to \infty}h(t_n)=0$.
Hence $s=\widetilde s$.
\end{proof}

\begin{theorem} \label{principal:theorem3}
Assume that
$s_* \le u_0 \le s^*$ a.e. in $\Omega$. Suppose further that
$$|\{x: u_0(x)=s \}|=0 \mbox{~~for all~~}s \in (m, M).$$
Then $\omega(u_0)$ contains only one element $\varphi$, and it is a step function. Moreover, $\varphi$ is given by
\begin{equation}\label{eq:form:of:varphi:when:omega-0=0}
\varphi=a_- \chi_{\Omega_-(\infty)}+a_+ \chi_{\Omega_+(\infty)},
\end{equation}
where $\Omega_-(\infty), \Omega_+(\infty)$ are defined by \eqref{dinhnghia:hop:car:omega,levelset} and
$a_+, a_-$ are constants satisfying
\begin{align*}
s_* \le a_-\le m, \quad M \le a_+ \le s^*, \quad f(a_-)=f(a_+)=\langle f(\varphi) \rangle.
\end{align*}
\end{theorem}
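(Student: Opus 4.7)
The plan is to combine Proposition \ref{principal:theorem3555}, which already restricts the form of any $\varphi\in\omega(u_0)$ to one of three step-function shapes, with a rigidity argument on the ``unstable'' set $\Omega_0(\infty)$ that collapses all three cases to a two-valued step function. Concretely, I would first show that $|\Omega_0(\infty)|=0$, then read off the form of $\varphi$, and finally extract uniqueness from the conservation of mass.

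For the claim $|\Omega_0(\infty)|=0$, fix $\varphi\in\omega(u_0)$ and pick $t_n\to\infty$ with $u(t_n)\to\varphi$ in $L^1(\Omega)$; passing to a subsequence I may assume $Y(t_n;u_0(x))\to\varphi(x)$ for a.e.\ $x\in\Omega$. By definition of $\Omega_0(\infty)$, for each $x\in\Omega_0(\infty)$ the orbit $Y(t;u_0(x))$ lies in $(m,M)$ for all $t\ge 0$, so any pointwise limit along $t_n$ lies in $[m,M]$. In cases (ii) and (iii) of Proposition \ref{principal:theorem3555}, $\varphi$ equals $s^*$ or $s_*$ a.e.\ on $\Omega_0(\infty)$, and since $s^*>M$ and $s_*<m$ this forces $|\Omega_0(\infty)|=0$ immediately. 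In case (i), $\varphi\equiv a_0\in(m,M)$ on $\Omega_0(\infty)$, so Lemma \ref{keylemma:123} applied to the initial data $u_0(x_1),u_0(x_2)$ of any two points of $\Omega_0(\infty)$ (outside a null set) yields $u_0(x_1)=u_0(x_2)$. Thus $u_0$ is a.e.\ constant on $\Omega_0(\infty)$, equal to some $c$, and since $m<u_0(x)<M$ on $\Omega_0(\infty)$ at $t=0$, necessarily $c\in(m,M)$. The hypothesis $|\{u_0=s\}|=0$ for $s\in(m,M)$ then gives $|\Omega_0(\infty)|\le|\{u_0=c\}|=0$.

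Once $|\Omega_0(\infty)|=0$ is established, the three formulas in Proposition \ref{principal:theorem3555} all reduce to
\[
\varphi = a_-\chi_{\Omega_-(\infty)}+a_+\chi_{\Omega_+(\infty)} \quad\text{a.e. in } \Omega,
\]
with $s_*\le a_-\le m$, $M\le a_+\le s^*$ and $f(a_-)=f(a_+)=\langle f(\varphi)\rangle$, which is exactly \eqref{eq:form:of:varphi:when:omega-0=0}. For uniqueness of $\varphi$, I observe that $\Omega_-(\infty)$ and $\Omega_+(\infty)$ are determined by $u_0$ alone, so only the pair $(a_-,a_+)$ is left to pin down. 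Parametrize the admissible pairs by $k=f(a_-)=f(a_+)\in[f(m),f(M)]$; since $f$ is strictly decreasing on $[s_*,m]$ and on $[M,s^*]$, both $a_-(k)$ and $a_+(k)$ are strictly decreasing functions of $k$. Mass conservation provides
\[
|\Omega_-(\infty)|\,a_-(k) + |\Omega_+(\infty)|\,a_+(k) = \int_{\Omega} u_0\,dx,
\]
and the left-hand side is strictly monotone in $k$ whenever $|\Omega_-(\infty)|+|\Omega_+(\infty)|>0$, yielding a unique $k$ and hence a unique $(a_-,a_+)$. The degenerate cases where one of the two sets has measure zero are handled directly by mass conservation, which forces the remaining value to equal $\langle u_0\rangle$.

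The main obstacle is the argument for case (i) above: one must rule out that positively many orbits are permanently trapped in the unstable interval $(m,M)$. The key is Lemma \ref{keylemma:123}, which exploits the fact that on $(m,M)$ the function $f$ is increasing, so the difference of two trapped orbits is nondecreasing in $t$ and therefore cannot tend to zero unless it is zero throughout; combined with the no-flat-value hypothesis on $u_0$ in $(m,M)$, this is precisely what collapses $\Omega_0(\infty)$ to a null set and turns the generically three-valued step function from Proposition \ref{principal:theorem3555} into the desired two-valued one.
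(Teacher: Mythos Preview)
Your proof is correct and follows essentially the same approach as the paper: first show $|\Omega_0(\infty)|=0$ via Lemma \ref{keylemma:123} and the no-level-set hypothesis, then read off the two-valued form from Proposition \ref{principal:theorem3555}, and finally use mass conservation together with the strict monotonicity of $a_\pm$ in the level $k$ to obtain uniqueness. Your handling of cases (ii) and (iii) of Proposition \ref{principal:theorem3555} (via the observation that pointwise limits on $\Omega_0(\infty)$ must lie in $[m,M]$) is slightly more explicit than the paper, and your uniqueness argument is phrased as monotonicity of $k\mapsto |\Omega_-(\infty)|a_-(k)+|\Omega_+(\infty)|a_+(k)$ rather than as a contradiction between two putative limits, but these are cosmetic differences.
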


\begin{proof}
We first show that
$$|\Omega_0(\infty)| = 0.$$
Suppose, by contradiction, that
$$|\Omega_0(\infty)| > 0$$
and let $\varphi \in \omega(u_0)$.
Then in view of Proposition \ref{principal:theorem3555},
 $ \varphi$ is constant on $\Omega_0(\infty)$, say,  $\varphi = a \in [m, M]$ a.e. on $\Omega_0(\infty)$.
 Let $t_n \to \infty$ be such that
$$u(t_n) \to \varphi \mbox{~~in~~}L^1(\Omega) \mbox{~~as~~} n \to \infty,$$
or equivalently,
$$Y(t_n;u_0(\cdot)) \to \varphi \mbox{~~in~~}L^1(\Omega) \mbox{~~as~~} n \to \infty.$$
Then there exists a subsequence $\{t_{n_k}\}$ such that
$$Y(t_{n_k};u_0(x)) \to a  \mbox{~~as~~} k \to \infty \mbox{~~a.e.~~}x\in\Omega_0(\infty).$$
We deduce from Lemma \ref{keylemma:123} that there exists $\alpha \in (m, M)$ such that
$$u_0=\alpha \mbox{~~on~~} \Omega_0(\infty),$$
which implies by the hypothesis in Theorem \ref{principal:theorem3} that
$$|\Omega_0(\infty)|=0.$$
This and Proposition \ref{principal:theorem3555} imply that any element $\varphi \in \omega(u_0)$ has the
form \eqref{eq:form:of:varphi:when:omega-0=0}.

Next we prove that $\omega(u_0)$ contains only one element.
We argue by contradiction, suppose that there exist
$\varphi, \overline \varphi \in \omega(u_0)$ with
$\varphi \neq \overline \varphi$.
By what we have shown above, we can write:
\begin{equation*}
\varphi=a_- \chi_{\Omega_-(\infty)}+a_+ \chi_{\Omega_+(\infty)}, \quad \overline \varphi=\overline a_- \chi_{\Omega_-(\infty)}+\overline a_+ \chi_{\Omega_+(\infty)},
\end{equation*}
where the constants $a_{\pm}, \overline a_{\pm}$ satisfy
\begin{align*}
&a_-, \overline a_- \in [s_*, m], \quad a_+, \overline a_+ \in [M, s^*],\\
&f(a_-)=f(a_+)=\langle f(\varphi) \rangle,\quad f(\overline a_-)=f(\overline a_+)=\langle f(\overline \varphi) \rangle.
\end{align*}
Since $\varphi \neq \overline \varphi$, we have $(a_-, a_+) \neq (\overline a_-, \overline a_+)$, which implies
$\langle f(\varphi) \rangle \neq \langle f(\overline\varphi) \rangle$.
Without loss of generality, we may assume that
$$f(m) \le \langle f(\varphi) \rangle < \langle f(\overline \varphi) \rangle \le f(M).$$
Since $f$ is strictly decreasing on $(-\infty,m]$ and $[M,\infty)$, we see that
$a_- >\overline a_-, a_+ >\overline a_+$. Then,
using the mass conservation property, we obtain
$$\int_\Omega u_0 = a_-|\Omega_-(\infty)|+a_+|\Omega_+(\infty)| > \overline a_-|\Omega_-(\infty)|+ \overline a_+|\Omega_+(\infty)| =\int_\Omega u_0,$$
which gives a contradiction. Hence $\omega(u_0)$ contains only one element. This completes the proof of Theorem \ref{principal:theorem3}.
\end{proof}


\subsection{The general case}
\label{sec:for:prove:bistable:case:u0outsidestar}

In this subsection, we prove Theorem \ref{hequa:chinh4} for the general case.
We begin with the following lemma whose proof is similar to that of Lemma \ref{to100:bodecho1.4}\,(ii).
\begin{lemma}\label{kqso1ddse:12}
Let $u$ be the solution of Problem $(P')$ with $u_0 \in L^\infty(\Omega)$.
If there exists $\varphi \in \omega(u_0)$ such that
$$ \langle f(\varphi)\rangle \in (f(m), f(M)),$$
then there exists $t_1 \ge 0$ such that  for all $t \ge t_1$,
$$ s_* \le u(x,t)  \le s^* \mbox{~~for a.e.~~}x\in\Omega.$$
\end{lemma}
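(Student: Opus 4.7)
\medskip\noindent\textbf{Proof proposal.} The plan is to mimic the argument of Lemma \ref{to100:bodecho1.4}(ii), with the crucial modification that the convergence $u(t_n)\to \varphi$ is only available along a subsequence. The remedy will be Corollary \ref{hequa:ham:lyapunov}, which upgrades this to uniform $L^1$ control on a time window $[t_n,t_n+T_0]$ of fixed length $T_0$, and this in turn transfers into a uniform estimate on $\lambda(t)$ over the same window.

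First I would set $k:=\langle f(\varphi)\rangle\in (f(m),f(M))$ and fix $\delta>0$ small enough that $f(m)<k-\delta<k+\delta<f(M)$. Defining $\alpha_-=\min\{s:f(s)=k+\delta\}$ and $\alpha_+=\max\{s:f(s)=k-\delta\}$ as in Lemma \ref{lemma:test:thunghiem}, the strict monotonicity of $f$ on $(-\infty,m]$ and $[M,\infty)$, combined with $f(s_*)=f(M)>k+\delta$ and $f(s^*)=f(m)<k-\delta$, forces
$$s_*<\alpha_-<m<M<\alpha_+<s^*.$$
I would then pick $\e>0$ so small that $s_*<\alpha_--\e$ and $\alpha_++\e<s^*$, and let $T_0=T_0(\delta,\e,k,s_1,s_2)$ be the constant supplied by Lemma \ref{lemma:test:thunghiem}.

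Next I would take a sequence $t_n\to\infty$ along which $u(t_n)\to \varphi$ in $L^1(\Omega)$. Applying Corollary \ref{hequa:ham:lyapunov} with the above $T_0$ and with $\eta>0$ to be fixed, one finds $n_0$ such that $\|u(t)-\varphi\|_{L^1(\Omega)}\le \eta$ for every $t\in[t_n,t_n+T_0]$ and $n\ge n_0$. Since $u$ takes values in $[s_1,s_2]$, on which $f$ is Lipschitz (say with constant $L$), this yields $|\lambda(t)-k|\le L\eta/|\Omega|$ for such $t$; choosing $\eta$ with $L\eta/|\Omega|\le \delta$, the hypothesis \eqref{new:===giatthiet:alpha:to:2} of Lemma \ref{lemma:test:thunghiem} is met with $\tau=t_n$. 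Hence
$$\alpha_--\e\le Y(t_n+T_0;s)\le \alpha_++\e \quad \text{for every } s\in[s_1,s_2].$$
In view of \eqref{problem:ODE:22:6:3:14} this translates to $s_*\le u(x,t_n+T_0)\le s^*$ for a.e.\ $x\in\Omega$, and Corollary \ref{moiquanheproblemP-ODE}(i) then propagates this confinement to all $t\ge t_1:=t_n+T_0$.

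The hard part is precisely the bridge step: turning the purely subsequential information $u(t_n)\to\varphi$ into uniform control of $\lambda(t)$ over an interval of fixed length $T_0$ that must itself be chosen \emph{before} the index $n$. This is exactly the role played by the Lyapunov-based Corollary \ref{hequa:ham:lyapunov}; once this is in hand, Lemma \ref{lemma:test:thunghiem} and the invariance property of Corollary \ref{moiquanheproblemP-ODE}(i) do the rest.
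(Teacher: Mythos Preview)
Your proposal is correct and follows essentially the same route as the paper's own proof: fix $k=\langle f(\varphi)\rangle$, choose $\delta,\e$ so that $s_*<\alpha_--\e<\alpha_++\e<s^*$, invoke Corollary~\ref{hequa:ham:lyapunov} to get $|\lambda(t)-k|\le\delta$ on a window $[\tau,\tau+T_0]$, apply Lemma~\ref{lemma:test:thunghiem}, and finish with Corollary~\ref{moiquanheproblemP-ODE}(i). Your write-up is in fact slightly more explicit than the paper's, which compresses the passage from $L^1$-closeness of $u(t)$ to $\varphi$ into the bound on $\lambda(t)$ without spelling out the Lipschitz step.
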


\begin{proof}
Since $\langle f(\varphi)\rangle \in (f(m), f(M))$, the equations
$f(s)=\langle f(\varphi)\rangle$ has the three roots $a_-<a_0<a_+$.
Moreover, we have
$$s_*<a_-,  \quad a_+<s^*.$$
Choose $\delta>0$ such that $f(m)< \langle f(\varphi) \rangle-\delta <\langle f(\varphi) \rangle+\delta <f(M)$ and set
$$\alpha_-:=\min\{s \in \R: f(s)=k+\delta\}, \quad \alpha_+:=\max\{s \in \R: f(s)=k-\delta\}.$$
Clearly, we have
$$s_* <\alpha_-<\alpha_+<s^*.$$
Choose $\e>0$ such that
\begin{equation}\label{one:for:s-star-star:for:dl:epsilon}
s_*<\alpha_-- \e <\alpha_++\e <s^*
\end{equation}
and let $s_1, s_2$ be as in $\bf (H)$.
Let $T_0$ be as in Lemma \ref{lemma:test:thunghiem}. It follows from Corollary \ref{hequa:ham:lyapunov} that
there exits $\tau>0$ such that
$$ |\lambda(t)-\langle f(\varphi) \rangle| \le \delta \mbox{~~for all~~} t \in [\tau, \tau+T_0].$$
Thus, Lemma \ref{lemma:test:thunghiem} yields
$$\alpha_- -\e \le Y(\tau+T_0;s) \le \alpha_+ + \e \mbox{~~for all~~}s\in[s_1, s_2].$$
This, together with \eqref{one:for:s-star-star:for:dl:epsilon} and \eqref{problem:ODE:22:6:3:14}, implies
$$s_* \le u(x,\tau+T_0) \le s^*\mbox{~~for a.e.~~}x\in\Omega.$$
Hence the assertion of Lemma \ref{kqso1ddse:12} follows from Corollary \ref{moiquanheproblemP-ODE}\,(i).
\end{proof}

\begin{proof}[\bf Proof of Theorem \ref{hequa:chinh4}]
First we prove that
\begin{equation}\label{phan1cuadinhlyquantrongt2}
\langle f(\varphi) \rangle \in [f(m), f(M)] \mbox{~~for all~~}
\varphi \in \omega(u_0).
\end{equation}
Indeed, if there exists $\psi \in \omega(u_0)$ such that
$\langle f(\psi) \rangle \not\in [f(m), f(M)]$, then it follows
from Corollary \ref{cor:omega-charact}\,(i)
that $\psi \equiv \langle u_0 \rangle \not \in [s_*, s^*],$
which contradicts the assumption of the theorem. Since $\omega(u_0)$
is connected in $L^1(\Omega)$, the set
$\{\langle f(\varphi)\rangle: \varphi \in \omega(u_0)\}$ is
connected in $\R$. This, together with
\eqref{phan1cuadinhlyquantrongt2}, implies that we have one
and only one of the following cases:
\begin{enumerate}[label=(\roman*)]
\item either there exists $\varphi \in \omega(u_0)$ such that
$\langle f(\varphi)\rangle \in  (f(m), f(M))$,
\item or we have $\langle f(\varphi)\rangle=f(m) \mbox{~~for all~~} \varphi \in \omega(u_0),$
\item or we have $\langle f(\varphi)\rangle=f(M) \mbox{~~for all~~} \varphi \in \omega(u_0).$
\end{enumerate}

If (i) holds, then by Lemma \ref{kqso1ddse:12}, there exists
$t_1 \ge 0$ such that
$$s_* \le u(x, t_1) \le s^* \mbox{~~for a.e~~}x\in\Omega.$$
It is also clear from \eqref{giathietchodinhlyso222dg} and the
strict monotonicity of $s\mapsto Y(t;s)$ that $u(x,t_1):=Y(t_1;u_0(x))$
satisfies
\[
|\{x: u(x,t_1)=s\}|=0 \mbox{~~for all~~}s\in \R.
\]
Hence the assertion of Theorem \ref{hequa:chinh4} follows from
Theorem \ref{principal:theorem3} by regarding $u(x,t_1)$ as
the initial data.

If (ii) holds, then it follows from Corollary
\ref{cor:omega-charact}\,(iii) that for each
$\varphi \in \omega(u_0)$, $\varphi$ is written
--- up to modification on a set of zero measure ---
in the form
\begin{equation}\label{phi}
\varphi(x)=m \chi_{A_-}(x)+s^*\chi_{A_+}(x)
\end{equation}
for some disjoint subsets $A_-, A_+$ satisfying
$A_- \cup A_+=\Omega.$  Now let
\[
{\Omega_+}(t)= \{ x \in\Omega: Y(t;u_0(x)) \ge M \}.
\]
Then, since
$\lambda(t):=\langle f(u(t)) \rangle \to f(m)$ as $t \to \infty,$
there exists $T>0$ such that
\[
\mathcal {L}(M) = -f(M)+\lambda(t) \le 0\quad \
\hbox{for all}\ \ t\geq T,
\]
where $\mathcal L$ is as defined in \eqref{definition:operator:L}.
Therefore, if $Y(t_1;u_0(x))\geq M$ for some $t_1\in[T,\infty)$,
then $Y(t;u_0(x))\geq M$ for all $t\geq t_1$.
This implies that $\Omega_+(t)$ is monotonically expanding for all
$t\geq T$.
Hence the limit $\Omega_+(\infty)$ exists
and we have
\[
\Omega_+(\infty)=\{x\in \Omega:  \exists t_0 \ge 0
\mbox{~~such that~~}  Y(t;u_0(x))\ge M \mbox{~~for all~~} t \ge t_0\}.
\]
It is also
easily seen that the set $A_+$ in \eqref{phi} coincides with this
limit, hence
\[
A_-=\Omega\setminus \Omega_+(\infty), \ A_+=\Omega_+(\infty).
\]
Since any element of $\omega(u_0)$ is written in this form,
$\omega(u_0)$ contains only one element, which establishes
the assertion of theorem.

The case (iii) can be treated by the same argument as (ii). This
completes the proof of Theorem \ref{hequa:chinh4}.
\end{proof}


\end{document}